\newtheorem{theorem}{Theorem}
\newtheorem{definition}[theorem]{Definition}
\newtheorem{lemma}[theorem]{Lemma}
\newtheorem{proposition}[theorem]{Proposition}
\newtheorem{remark}[theorem]{Remark}
\subjclass[2010]{37A10; 37A60; 37C30; 37C60; 37H05}
\keywords{Self consistent transfer operators, invariant measures, convergence to equilibrium, linear response}
\begin{document}
\title[]{Self-consistent transfer operators. Invariant measures, convergence
to equilibrium, linear response and control of the statistical properties.}
\author{Stefano Galatolo$^1$}
\email{$^{1}$ stefano.galatolo@unipi.it}
\address{Dipartimento di Matematica, Universita di Pisa, Via \ Buonarroti
1,Pisa - Italy}

\begin{abstract}
We describe a general approach to the theory of self consistent transfer
operators. These operators have been introduced as tools for the study of
the statistical properties of a large number of all to all interacting
dynamical systems subjected to a mean field coupling.

We consider a large class of self consistent transfer operators and prove
general statements about existence of invariant measures, speed of
convergence to equilibrium, statistical stability and linear response.

While most of the results presented in the paper are valid in a weak
coupling regime, the existence results for the invariant measures we show
also hold outside the weak coupling regime.

We apply the general statements to examples of different nature: coupled
continuous maps, coupled expanding maps, coupled systems with additive
noise, systems made of \emph{different maps }coupled by a mean field
interaction and other examples of self consistent transfer operators not
coming from coupled maps.

We also consider the problem of finding the optimal coupling between maps in
order to change the statistical properties of the system in a prescribed way.
\end{abstract}

\maketitle
\tableofcontents

\section{Introduction}

Suppose we have a normed real or complex vector space $B_{w}$ and a
collection of linear operators $L_{\delta ,f}:B_{w}\rightarrow B_{w}$ \
depending on some parameter \ $\delta \in \mathbb{R}$ and $f\in B_{w}$. One
can consider the nonlinear function $\mathcal{L}_{\delta }:B_{w}\rightarrow
B_{w}$ defined by%
\begin{equation}
\mathcal{L}_{\delta }(f):=L_{\delta ,f}f.  \label{1.1}
\end{equation}%
These kind of nonlinear functions have been also called \emph{%
self-consistent transfer operators} (as the operator itself depends on the
point at which it is calculated). In many examples where these operators are
used the family $L_{\delta ,f}$ depends in a Lipschitz way on $f$ and the
Lipschitz constant is proportional to $\delta $. In this context the
parameter $\delta $ represents in some sense a measure of the strength of
the nonlinearity of the function $\mathcal{L}$. These concepts have been
introduced and studied as models to describe the collective behavior of a\
network of interacting\ systems coupled by a mean field interaction. In the
case of operators modeling \ coupled extended systems the parameter $\delta $
often represents the strength of the coupling or of the interaction between
the systems.

These operators, their invariant measures and their properties have been
studied by different techniques in classes of examples. In this paper we
attempt a general approach to the study of this kind of operators, and the
statistical properties of their dynamics. The main goal is to investigate
under which assumptions we can establish some basic important properties of
the self-consistent system as the existence of the invariant measure in a
certain regularity class, the convergence to equilibrium, the statistical
stability and response to perturbation of the system.

We show the flexibility and the effectiveness of the approach applying it to
several kinds of self-consistent operators coming from coupled expanding
maps, coupled random systems and other examples.

We study the behavior of the invariant measures of these operators, their
convergence to equilibrium and their statistical stability mostly in the
"weak coupling" regime, in the sense that most of the results presented hold
for intervals of values of the type $\delta \in \lbrack 0,\overline{\delta }%
] $ for a relatively small $\overline{\delta }$ (with an estimate for the
size of $\overline{\delta }$, hence not only for $\delta \rightarrow 0$).
Some of the results presented however can be applied even for large values
of $\delta $. \ In particular, under suitable assumptions, we prove the
existence of some invariant probability measure for the self-consistent
operator $\mathcal{L}_{\delta }$, also providing estimates on its regularity
imposing no restrictions on the size of $\delta $ (Theorem \ref{existence1}%
), such result then also hold in a \emph{strong coupling }regime, for which
very few results are known. Sufficient assumptions for the uniqueness of the
invariant measure are then shown in the case of weak coupling regime
(Theorem \ref{existence}). Still in the weak coupling regime we study the
attractiveness of the invariant measure as a fixed point of \ $\mathcal{L}%
_{\delta }$, providing exponential convergence to equilibrium results
(Theorem \ref{expco}) and study the response of the invariant measure of the
system to changes in the function defining the mean field coupling
interaction in the zero coupling limit (Theorems \ref{ss} \ and \ref%
{thm:linresp}). We also investigate these questions from an optimal control
point of view. Suppose we have an initial uncoupled system and we want to
introduce a coupling which maximizes certain aspects of the statistical
properties of the coupled system, as for example the average of a given
observable. What is the best coupling to be introduced in order to do so?
This is a problem related to the control of the statistical properties of
chaotic and complex systems.

In this paper we have to deal with several concepts: networks of coupled
systems; self-consistent transfer operators; linear response; random and
deterministic systems; optimal response and control of the statistical
properties. To help the reader, each main section dealing with these
concepts will have an introductory part trying to explain the concept, the
main ideas behind and giving some additional references for its deeper
understanding.

\noindent \textbf{Transfer operators. }An efficient method for the study of
transport and the statistical properties of a dynamical system is to
associate to the system a certain transfer operator describing how the
dynamics acts on suitable spaces of measures or probability distributions on
the phase space. Important properties of the original system are related to
fixed points and other properties of these transfer operators. The transfer
operator which is convenient to associate to a dynamical system is usually a
linear operator. Self-consistent transfer operators are nonlinear operators.

As mentioned before, these operators arise as natural models of extended
systems in which there is a large set of interacting dynamical systems and
we consider the dynamics of each element of the large set (the local
dynamics) as being influenced or perturbed by the state of the other
elements in a mean field coupling. This means that the perturbation we apply
to the dynamics of each local system depends on the distribution of the
states of all the other elements of the large system. This global state will
be represented by a probability measure, representing the probability of
finding a generic local system in a given set of states of the phase space.
If now we consider the transfer operator associated with the dynamics of
each local system we have that this linear operator depends on the current
global state of the system. One can furthermore suppose all the local
systems to be homogeneous and consider again the measure representing the
global state of the system as a representative for the probability of
finding a local system in a given state\footnote{%
We could also consider in a similar way interacting systems of different
types, where instead of a single measure representing the distribution of
the states of the systems in a certain phase space we will have a a vector
of measures representing the states of systems of different type (see
Section \ref{diff} for more details).}. Applying the transfer operator
associated to the local dynamics to see how this probability measure
evolves, we have then a transfer operator depending on a certain measure and
acting on the measure itself. This naturally brings us to the formalization
presented in $($\ref{1.1}$)$. From a formal point of view this give rise to
a nonlinear function to be applied to a certain functional space of
measures. In the weak coupling regime however this nonlinear function is a
small nonlinear perturbation of a linear one, simplifying the situation and
the understanding of the properties of this function.

The use of self-consistent operators for the study of networks of coupled
maps was developed from a mathematical point of view in \cite{K} \ and \cite%
{Bl11}. In Section \ref{expla} we explain some of the heuristics behind the
use of these operators for the study of coupled maps. We refer to \cite{CF}
\ and \cite{Sell} for a further discussion on the scientific context in
which these concepts appear and for an accurate bibliography on the subject
(see also \cite{Fe} and \cite{Kan} for other approaches to maps in a global
coupling). For introductory material we also recommend the reading of the
paper \cite{ST}.

\noindent \textbf{Overview of the main results. \ }In Section \ref{exsec} we
show a set of general assumptions on the family of operators $L_{\delta ,f}$%
, \ ensuring that the nonlinear operator $\mathcal{L}_{\delta }$ has a fixed
point of a certain type and hence the associated system has some invariant
probability measure (see Theorem \ref{existence1}). This result is obtained
by topological methods, applying the Brouwer fixed point theorem to a
suitable sequence of finite dimensional nonlinear operators approximating $%
\mathcal{L}_{\delta }$. The assumptions required for this result are related
to the regularity of the family of linear operators $L_{\delta ,f}$ \ when $%
f $ varies\ in a strong-weak topology, the regularity of its invariant
measures (see the assumptions $(Exi1),(Exi1.b),(Exi2)$ in Theorem \ref%
{existence1}) and the existence of a suitable finite dimensional projection,
allowing to apply a kind of finite element reduction of the problem. The
result also holds outside the weak coupling regime and implies a general
statement for the existence of an invariant measure for systems of
continuous maps in the mean field coupling (see Proposition \ref{KB}). We
also discuss the uniqueness of the invariant probability measure. This will
be proved in the weak coupling regime (see Theorem \ref{existence}). The set
of assumptions for the uniqueness, essentially require that the operators $%
L_{\delta ,f}$ and their fixed points depend on $f$ in a Lipschitz way (see
assumptions $(Exi3)$ in Theorem \ref{existence}). The assumptions required
to apply these results are not difficult to be verified, and in the
following sections we show how to apply this general framework to
interacting random and deterministic systems, together with examples of
different kind.

In section \ref{sec1} we take the same point of view with the goal of
investigating the convergence to equilibrium: the attractiveness of the
invariant measure as a fixed point of $\mathcal{L}_{\delta }$ and in the
weak coupling regime we show assumptions under which we can prove
exponential speed of convergence to equilibrium for a general class of
self-consistent transfer operators (see Theorem \ref{expco}). The
assumptions we require are related to convergence to equilibrium and a
common "one step" Lasota Yorke inequality satisfied by each transfer
operator in the family $L_{\delta ,f}$ (see assumptions $(Con1),...,(Con3)$%
). The assumptions made are in a certain sense natural when considering
suitable coupled dynamical systems like expanding maps or random systems
with additive noise, and in the next sections we apply these general results
to several classes of examples.

In Section \ref{sec:linresp}, \ after an introduction to the concept of
Linear Response and \ some related bibliography, we prove a general
statistical stability result (see Theorem \ref{ss}) and a linear response
result for nonlinear perturbations of linear transfer operators (see Theorem %
\ref{thm:linresp}), describing the first order change in the invariant
measure of the system when an infinitesimal perturbation leading to a
nonlinear operator is applied. We remark that this result is similar in the
statement and in the proof to many other general linear response results
proved for linear transfer operators (see e.g. \cite{GS}).

The methods used to establish the general statements in sections \ref{exsec}%
, \ref{sec1}, \ref{sec:linresp} are related to the classical transfer
operator approach, letting the transfer operator associated with the system
to act on stronger and weaker spaces (in a way similar to the classical
reference \cite{KL}), exploiting the fact that the perturbations we are
interested in applying to our systems are small when considered in a kind of
mixed norm, from the stronger to the weaker space.

We show the flexibility of this general approach applying it to systems of
different kind. In particular we will consider coupled deterministic
expanding maps and random maps with additive noise, coupling identical maps
or different ones in a mean field regime. For these examples we will use
simple spaces of functions as $L^{1},C^{k}$, the Sobolev spaces $%
W^{k,1},W^{k,2}$ or the space of Borel signed measures equipped with the
total variation or the Wasserstein distance.

In Section \ref{contmap} we consider continuous maps on the circle with a
mean field coupling and we prove the existence of an invariant probability
measure for the associated self-consistent transfer operators, providing a
sort of Krylov-Bogoliubov theorem for this kind of extended systems.

In Section \ref{secmap}, after recalling several useful classical results on
expanding maps we show that the self-consistent transfer operator associated
with a network of coupled expanding maps has an invariant measure in a
suitable Sobolev space $W^{k,1}$ and we show an estimate for its Sobolev
norm (see Theorem \ref{existenceexp}). In the small coupling regime we also
show exponential convergence to equilibrium for this kind of systems. This
will allow to apply our general linear response statement and get a linear
response statement for the zero coupling limit of such systems. Similar
results for this kind of systems in the weak coupling regime appear in \cite%
{ST}, the spaces used and the methods of proof however are quite different.

In Section \ref{noise} we consider coupled random maps and we apply our
general framework to this case. More precisely, we consider maps with
additive noise in which at every iterate of the dynamics a certain
deterministic map is applied and then a random i.i.d. perturbation is added.
Due to the regularizing effect of the noise at the level of the associated
transfer operators we do not need to put particular restrictions on the maps
considered. These examples are then particularly interesting for the
applications. After recalling the basic properties of these systems and the
associated transfer operators we define a self-consistent transfer operator
representing the global behavior of a network of coupled random maps. We
prove the existence of invariant measures for this self-consistent operator
and show an estimate for its $C^{k}$ norm which is uniform when varying the
coupling strength. In the case of weak coupling, we also prove exponential
speed of convergence to equilibrium for this globally coupled system. We
then apply the general linear response results to this system, obtaining
again a linear response result for the system in the zero coupling limit.

In Section \ref{strange} we consider a class of self-consistent transfer
operators where the deterministic part of the dynamics is driven by a
certain map whose slope depends on the average of a given observable, in
some sense similar to the examples studied in \cite{Se2}. For these systems
we study the existence, uniqueness of the invariant measures and linear
response, similarly to what is proved for the systems coming from coupled
maps.

In Section \ref{diff} we consider suitable self-consistent transfer
operators to model a \emph{mean field interaction of different maps}. For
simplicity we consider two types of maps. We show that the general framework
we are considering also applies to this case, showing the existence and
uniqueness of the invariant measure in a weak coupling regime.

In Section \ref{opt} we consider the linear response results we proved from
an optimal control point of view. Suppose we want to introduce in the system
a coupling which changes the statistical properties of the dynamics in some
desired way. What is the optimal coupling to be considered? Given some
observable whose average is meant to be optimized and a convex set $P$ of
allowed infinitesimal couplings to be applied, we show conditions under
which the problem has a solution in $P$ and this solution is unique. We
remark that in \cite{MK} the research in this direction of research was
motivated, with the goal of the management of the statistical properties of
complex systems and in this direction several results for probabilistic
cellular automata were shown.

\section{Self-consistent transfer operators for coupled circle maps,
heuristics and formalization \label{expla}}

Since the study of self-consistent transfer operators is strongly motivated
by the applications to systems of globally interacting maps, in this section
we briefly introduce a model representing the dynamics of a large number of
coupled maps in a global mean field interaction and the associated \emph{%
self-consistent transfer operators}. We will see how the formalization of
such interaction leads to the study of a self-consistent transfer operator
of the kind defined at the beginning of the introduction.

We remark that in this paper we only consider discrete time dynamical
systems. In the continuous time case, the models one is lead to consider are
related to the topic of Vlasov-type differential equations, we suggest the
recent surveys \cite{F2}, \cite{go} and the references therein for an
introduction to the subject.

We are now going to define more precisely the self-consistent transfer
operators associated with a set of dynamical systems coupled by a mean field
interaction. \ One can think the set of interacting systems as a continuum,
endowed with a measure, as for instance a swarm of interacting particles
distributed by a certain density in different parts of the space. \ We take
this point of view and we consider the case in which the set of systems we
consider is a measurable space $M$ with a probability $p$. The set $M$ can
be finite or infinite and in each case we can define the self-consistent
transfer operator associated with the system. We remark that one could see
the case where $M$ is infinite as a suitable limit of finite sets and define
the self-consistent transfer operator associated with the global coupling of
infinitely many systems by a suitable limit of finitely many couplings (see 
\cite{Bl11}, \cite{K}, \cite{ST} and Footnote \ref{not1} for further details
on this approach).

Let us fix some notation and terminology: let us consider two metric spaces $%
X,Y$, the spaces of Borel probability measures $PM(X),~PM(Y)$ on $X$ and $Y,$
and a Borel measurable $F:X\rightarrow Y$. We denote the pushforward of $F$
as $L_{F}:PM(X)\rightarrow PM(Y)$, defined by the relation%
\begin{equation*}
\lbrack L_{F}(\mu )](A)=\mu (F^{-1}(A))
\end{equation*}%
for all $\mu \in PM(X)$ and measurable set $A\subseteq Y$. The pushforward
can be extended as a linear function $L_{F}:SM(X)\rightarrow SM(Y)$ from the
vector space of Borel signed measures on $X$ to the same space on $Y$. In
this case $L_{F}$ will be also called as the transfer operator associated
with the function $F$.

We now define a model for the dynamics of a family of dynamical systems
interacting in the mean field. For simplicity we will suppose as a phase
space for each interacting system the unit circle $\mathbb{S}^{1}$ and we
will equip $\mathbb{S}^{1}$ with the Borel $\sigma -$algebra. We consider an
additional metric space $M$ \ equipped with the Borel $\sigma -$algebra and
a probability measure $p\in PM(M)$. \ Let us consider a collection of \emph{%
identical dynamical systems} $(\mathbb{S}^{1},T)_{i}$, with $i\in M$ \ and $%
T:\mathbb{S}^{1}\rightarrow \mathbb{S}^{1}$ being a Borel measurable
function.

The initial state of this collection of interacting systems can be
identified by a point $\mathbf{x}(0)=(x_{i}(0))_{i\in M}\in $ $(\mathbb{S}%
^{1})^{M}$ (we suppose $i\rightarrow x_{i}(0)$ being measurable). Let $%
\mathcal{X\subseteq }(\mathbb{S}^{1})^{M}$ be the set of measurable
functions $M\rightarrow \mathbb{S}^{1}$. We now define the dynamics of the
interacting systems by defining a global map $\mathcal{T}:\mathcal{X}%
\rightarrow \mathcal{X}$ and global trajectory of the system by%
\begin{equation*}
\mathbf{x}(t+1):=\mathcal{T}(\mathbf{x}(t))
\end{equation*}%
\ where $\mathbf{x}(t+1)$ is defined on every coordinate by applying at each
step the local dynamics $T$, \emph{plus a perturbation given by the mean
field interaction with the other systems}, by%
\begin{equation}
x_{i}(t+1)=\Phi _{\delta ,\mathbf{x}(t)}\circ T(x_{i}(t))
\end{equation}%
for all $i\in M$, \ where $\Phi _{\delta ,\mathbf{x}(t)}:\mathbb{S}%
^{1}\rightarrow \mathbb{S}^{1}$ represents the perturbation provided by the
global mean field coupling with strength $\delta \geq 0$, defined in the
following way: let $\pi _{\mathbb{S}^{1}}:\mathbb{R}\rightarrow \mathbb{S}%
^{1}$ be the universal covering projection, let us consider some continuous
function $h:\mathbb{S}^{1}\times \mathbb{S}^{1}\rightarrow \mathbb{R}$,
where $h(x,y)$ represents the way in which the presence of some subsystem in
the state $y\in \mathbb{S}^{1}$ perturbs a certain subsystems in the state $%
x\in \mathbb{S}^{1}$; we define \ $\Phi _{\delta ,\mathbf{x}(t)}$ as 
\begin{equation}
\Phi _{\delta ,\mathbf{x}(t)}(x)=x+\pi _{\mathbb{S}^{1}}(\delta
\int_{M}h(x,x_{j}(t))~dp(j))  \label{diffeo}
\end{equation}%
($j\rightarrow h(x,x_{j}(t))$ can be viewed as a function $:M\rightarrow 
\mathbb{R}$)\footnote{\label{not1} The set $M$ can be finite or infinite. In
the case $M=M_{n}:=\{1,...,n\}$ is finite we consider a\ finite set of
interacting systems. In this case a natural choice is to set $p$ as the
uniform distribution $p_{n}$ giving to each system the same weight $\frac{1}{%
n}$. We remark that in this case $(\ref{diffeo})$ becomes%
\begin{equation}
\Phi _{\delta ,x(t)}(x)=x+\pi _{\mathbb{S}^{1}}(\frac{\delta }{n}%
\sum_{j=1}^{n}h(x,x_{j}(t))).  \label{wnnw}
\end{equation}%
\ One approach to the definition of the dynamics of a system made of
infinitely many globally interacting maps is to start by the case of $n$
interacting maps and then considering the limit for $n\rightarrow \infty $.
The perturbation $\Phi _{\delta ,x(t)}$ induced by the interaction between
the systems is defined as a suitable \ limit of \ref{wnnw}. \ This might
raise some technical problem in selecting states and the assumptions for
which the limit converge. With our approach we might also consider an
infinite space of interacting systems as \ a limit of a finite interacting
family. In this case it is sufficient to see $(M,p)$ as a suitable limit of $%
(M_{n},p_{n})$.}. Consider the function $I_{\mathbf{x},t}:M\rightarrow 
\mathbb{S}^{1}$ defined by 
\begin{equation*}
I_{\mathbf{x},t}(i):=x_{i}(t).
\end{equation*}

We remark that with these definitions, for all $t\in \mathbb{N}$, $I_{%
\mathbf{x},t}$ is also measurable. We say that the global state $\mathbf{x}%
(t)$ of the system is represented by a probability measure $\mu _{\mathbf{x}%
(t)}\in PM(\mathbb{S}^{1})$ if 
\begin{equation*}
\mu _{\mathbf{x}(t)}=[I_{\mathbf{x},t}]_{\ast }(p)
\end{equation*}%
(the pushforward of $p$ by the function $I_{\mathbf{x},t}$). \ Now we see
how the measures representing given initial conditions evolve with the
dynamics.

\begin{lemma}
Let us consider the system $(\mathcal{X},\mathcal{T})$ defined above. \ Let $%
\mu \in PM(\mathbb{S}^{1}),$ \ let us consider 
\begin{equation*}
\Phi _{\delta ,\mu }(x):=x+\pi _{\mathbb{S}^{1}}(\delta \int_{\mathbb{S}%
^{1}}h(x,y)~d\mu (y)).
\end{equation*}%
Suppose the initial condition of the system $\mathbf{x}(0)$\ is represented
by a measure $\mu _{\mathbf{x}(0)},$ then $\mathbf{x}(1)=\mathcal{T}(\mathbf{%
x}(0))$ is represented by \ 
\begin{equation*}
\mu _{\mathbf{x}(1)}=L_{\Phi _{\delta ,\mu _{\mathbf{x}(0)}}\circ T}(\mu _{%
\mathbf{x}(0)}).
\end{equation*}
\end{lemma}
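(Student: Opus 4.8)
The plan is to unwind the definitions on both sides and verify the defining relation of "being represented by a measure" — namely that for every continuous test function $g$, the average of $g$ over the systems equals the integral of $g$ against the candidate measure. So I want to show that
\begin{equation*}
\int_{M} g(x_j(1))\,dp(j) = \int_{\mathbb{S}^1} g(y)\,d[L_{\Phi_{\delta,\mu}\circ T}(\mu)](y)
\end{equation*}
for every continuous $g:\mathbb{S}^1\to\mathbb{R}$, where $x_j(1)=\Phi_{\delta,x(0)}\circ T(x_j(0))$.

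First I would simplify the right-hand side. By the definition of the pushforward transfer operator $L_F$ and the standard change-of-variables formula for pushforward measures, $\int_{\mathbb{S}^1} g\,d[L_F(\mu)] = \int_{\mathbb{S}^1} (g\circ F)\,d\mu$, applied here with $F=\Phi_{\delta,\mu}\circ T$. Then I would use the hypothesis that $x(0)$ is represented by $\mu$, i.e. $\int_M \psi(x_j(0))\,dp(j)=\int_{\mathbb{S}^1}\psi\,d\mu$ for all continuous $\psi$, taking $\psi = g\circ\Phi_{\delta,\mu}\circ T$ (which is continuous, since $T$, $\Phi_{\delta,\mu}$, and $g$ are). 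This rewrites the right-hand side as $\int_M g(\Phi_{\delta,\mu}\circ T(x_j(0)))\,dp(j)$.

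Comparing with the left-hand side, it then remains to show that $\Phi_{\delta,\mu}\circ T(x_j(0)) = \Phi_{\delta,x(0)}\circ T(x_j(0))$ for $p$-almost every $j$, i.e. that the two perturbation maps $\Phi_{\delta,\mu}$ and $\Phi_{\delta,x(0)}$ agree. Unwinding both definitions, this reduces to the identity
\begin{equation*}
\int_{\mathbb{S}^1} h(x,y)\,d\mu(y) = \int_M h(x,x_j(0))\,dp(j)
\end{equation*}
for each $x\in\mathbb{S}^1$, which is precisely the statement that $\mu$ represents $x(0)$ applied to the continuous test function $y\mapsto h(x,y)$ (with $x$ held fixed). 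The composition with $\pi_{\mathbb{S}^1}$ and the addition of $x$ are identical on both sides, so the two maps coincide, and the two sides of the displayed identity are equal.

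**The main obstacle** is essentially bookkeeping rather than a deep difficulty: one must be careful that the definition of "$\mu$ represents $x(0)$" is stated only for continuous $g$, so every test function I plug in must be verified continuous, in particular $y\mapsto h(x,y)$ (continuous by hypothesis on $h$) and $g\circ\Phi_{\delta,\mu}\circ T$. If $T$ is merely Borel measurable rather than continuous, then $g\circ\Phi_{\delta,\mu}\circ T$ need not be continuous, and one would instead extend the representing relation from continuous functions to bounded measurable functions by a standard density/monotone-class argument, using that $\mu$ is by definition the pushforward of $p$ under $i\mapsto x_i(0)$. This identification of $\mu$ as a genuine pushforward is what lets the whole computation go through at the level of measurable functions, and I would invoke it to close the gap wherever continuity alone is insufficient.
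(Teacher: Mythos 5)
Your proof is correct and follows essentially the same route as the paper's: apply the representation property to the composite test function $g\circ\Phi\circ T$, and identify $\Phi_{\delta,\mu}$ with $\Phi_{\delta,x(0)}$ by applying the representation property to $y\mapsto h(x,y)$; the only difference is that you unwind the pushforward side first while the paper starts from the average over $M$, which is the same computation read in the opposite direction. Your closing remark about $T$ being merely Borel measurable is a valid observation (the paper glosses over it), and your resolution --- using that ``represented by $\mu$'' means precisely that $\mu$ is the pushforward of $p$ under $i\mapsto x_i(0)$, so the change-of-variables identity holds for all bounded measurable test functions --- is exactly what the paper's parenthetical definition implicitly relies on.
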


\begin{proof}
Since two probability measures are identical if they act in the same way on
continuous functions, we prove that for all continuous $g:\mathbb{S}%
^{1}\rightarrow \mathbb{R},$ we have $\int g~d\mu _{\mathbf{x}(1)}=\int
g~dL_{\Phi _{\delta ,\mu _{\mathbf{x}(0)}}\circ T}(\mu _{\mathbf{x}(0)}).$
By applying several times the change of variable formula, we have\ 
\begin{eqnarray*}
\int_{\mathbb{S}^{1}}g~d\mu _{\mathbf{x}(1)} &=&\int_{M}g(x_{j}(1))~dp(j) \\
&=&\int_{M}g(\Phi _{\delta ,\mathbf{x}(0)}\circ T(x_{j}(0)))~dp(j) \\
&=&\int_{\mathbb{S}^{1}}g\circ \Phi _{\delta ,\mathbf{x}(0)}\circ T(y)~d\mu
_{\mathbf{x}(0)}(y) \\
&=&\int_{\mathbb{S}^{1}}g~dL_{\Phi _{\delta ,\mathbf{x}(0)}\circ T}(\mu _{%
\mathbf{x}(0)}).
\end{eqnarray*}%
But since $\mathbf{x}(0)$\ is represented by $\mu _{\mathbf{x}(0)}$%
\begin{eqnarray*}
\Phi _{\delta ,\mathbf{x}(0)}(x) &=&x+\pi _{\mathbb{S}^{1}}(\delta
\int_{M}h(x,x_{j}(0))~dp(j)) \\
&=&\Phi _{\delta ,\mu _{\mathbf{x}(0)}}(x)
\end{eqnarray*}%
leading to the statement.
\end{proof}

Hence the measure representing the current state of the system fully
determines the measure which represents the next state of the system,
defining a function between measures%
\begin{equation*}
\mu \rightarrow L_{\Phi _{\delta ,\mu }\circ T}(\mu ).
\end{equation*}%
\ This function is an example of what in the following section we will
consider as a self-consistent transfer operator. \ In the case of coupled
systems $(\mathcal{X},\mathcal{T})$ described above, to describe the
evolution of a certain probability measure representing the global state of
the system we hence apply at each time a transfer operator from a family of
the kind 
\begin{equation*}
L_{\delta ,\mu }:=L_{\Phi _{\delta ,\mu }\circ T}=L_{\Phi _{\delta ,\mu
}}L_{T}.
\end{equation*}%
Each operator $L_{\delta ,\mu }$ can be seen as the transfer operator
associated with the dynamics of a given node of the network of coupled
systems, given that the distribution of the states of the other nodes in the
network is represented by the measure $\mu .$

We remark that the extended system $(\mathcal{X},\mathcal{T})$ above
described can be identified by the choice of the phase space $\mathbb{S}^{1}$%
, the local dynamics $T$, the strength of coupling $\delta $ and the
coupling function $\ h.$ Hence it can be identified as the quadruple $(%
\mathbb{S}^{1},T,\delta ,h)$.

\section{Self-consistent operators, the existence of the invariant measure. 
\label{exsec}}

\noindent \textbf{General standing assumptions and notations. }Motivated by
the class of examples described in the previous section, given a compact
metric space $X$ we consider a family of Markov operators $L_{\delta ,\mu
}:SM(X)\rightarrow SM(X)$ depending on a probability measure $\mu \in PM(X)$
\ and $\delta \geq 0$. In our statements, we will apply the operators $%
L_{\delta ,\mu }$ to different strong and weak spaces of measures which are
subspaces of $SM(X).$ We now introduce the notations and the basic
assumptions to formalize this. \ Let $(B_{w},||~||_{w})$ be a normed vector
subspace \ of $\ SM(X)$. In the paper we will suppose that the weak norm $%
||~||_{w}$ is strong enough so that the function $\mu \rightarrow \mu (X)$
is continuous as a function $:B_{w}\rightarrow \mathbb{R}$ and that \ $||\mu
_{n}-\mu ||_{w}\rightarrow 0$ for a sequence of positive measures $\mu _{n}$
implies that $\mu $ is positive. Let $P_{w}:=B_{w}\cap PM(X)$ the set of
probability measures in $B_{w}.$ We will suppose that $P_{w}$ with the
metric induced by $||~||_{w}$ is a complete metric space.

A \emph{self-consistent transfer operator} in our context will be the given
of a family of Markov linear operators such that $L_{\delta ,\mu }:$ $%
B_{w}\rightarrow B_{w}$ \ for all $\mu \in P_{w}$, some $\delta \geq 0$ and
the dynamical system $(P_{w},\mathcal{L}_{\delta })$ where $\mathcal{L}%
_{\delta }:P_{w}\rightarrow P_{w}$ is defined by%
\begin{equation}
\mathcal{L}_{\delta }(\mu ):=L_{\delta ,\mu }(\mu ).  \label{cupled}
\end{equation}%
In the notation $\mathcal{L}_{\delta }$ \ we emphasize the dependence on $%
\delta $ as in the following we will be interested in the behavior of these
operators \ for certain sets of values of $\delta $ or in the limit $\delta
\rightarrow 0.$ We also point out that here and in the following we will use
the calligraphic notation $\mathcal{L}$ to denote some operator which is not
necessarily linear and the notation $L$ to denote linear operators.

In the following we will apply linear operators on spaces with different
topologies. If $A,B$ are two normed vector spaces and $L:A\rightarrow B$ is
a linear operator we denote the mixed norm $\Vert L\Vert _{A\rightarrow B}$
as 
\begin{equation*}
\Vert L\Vert _{A\rightarrow B}:=\sup_{f\in A,\Vert f\Vert _{A}\leq 1}\Vert
Lf\Vert _{B}.
\end{equation*}

\begin{remark}
In the case where $L_{\delta ,\mu }$ is the transfer operator associated
with a map $T_{\delta ,\mu }:X\rightarrow X$ \ to this dynamical system one
can also associate the skew product dynamical system $(A\times X,F)$ on $%
A\times X$ where $F:A\times X\rightarrow A\times X$ \ is defined by%
\begin{equation*}
F(\mu ,x)=(\mathcal{L}_{\delta }(\mu ),T_{\delta ,\mu }(x))
\end{equation*}%
(see also \cite{Bla}). One can remark that in the case $\mu $ is a fixed
point for $\mathcal{L}_{\delta }$ the associated dynamics will be nontrivial
only on the second coordinate, where \ $T_{\delta ,\mu }$ represents a map
for which $\mu $ is an invariant measure. \ Hence by the classical ergodic
theory results, finding the fixed points of $\mathcal{L}_{\delta }$ gives
important information on the statistical behavior of the second coordinate
of the system $F$.
\end{remark}

We will hence be interested in the dynamics $\mathcal{L}_{\delta }$
considered on a space of measures, and on the properties of its fixed
points. In particular we will be interested in the attractiveness of these
fixed points (which will determine the convergence to equilibrium of the
global system) and to the stability \ or response of these fixed points with
respect to perturbations of the global system.

\noindent \textbf{Standing assumptions 1. }In this section we will use the
following standing assumptions and notations.

Let $B_{w}$ as above, and let $B_{s}$ be a normed vector subspace $%
(B_{s},||~||_{s})\subseteq $ $(B_{w},||~||_{w})$. Suppose $||~||_{s}\geq
||~||_{w}$. We also denote by $P_{s}:=P_{w}\cap B_{s}$ the set of
probability measures in $B_{s}$. We suppose $P_{s}\neq \emptyset $. We will
also suppose that there is $M\geq 0$ such that as $\mu $ varies in $P_{w}$
the family $L_{\delta ,\mu }$ is such that \ $||L_{\delta ,\mu
}||_{B_{w}\rightarrow B_{w}}\leq M$ and $||L_{\delta ,\mu
}||_{B_{s}\rightarrow B_{s}}\leq M$.

We now prove general statements regarding the existence and uniqueness of
regular (and then physically meaningful) invariant measures for
self-consistent transfer operators. We remark that since our transfer
operators are not linear, the normalization of the measure to a probability
one is important in this context. In the case in which we put no
restrictions on the size of the parameter $\delta $ representing the
nonlinearity strength, by a topological reasoning we prove a general result\
on the existence of invariant probability measures (Theorem Theorem \ref%
{existence1}). We then suppose that the parameter $\delta $ is below a
certain threshold, and in this weak coupling regime we also prove some
unique existence result (see Theorem \ref{existence}). We remark that in the
weak coupling regime similar results have been proved in several cases of
extended systems (see e.g. \cite{Bl11}, \cite{KL05}, \cite{JP}), also
showing the uniqueness of the invariant measure in a certain class. It is
known on the other hand that as the coupling strength grows, phase
transitions phenomena can occur, leading to the presence of multiple
invariant measures (see \cite{BKLZ}, and \cite{Se2} for a case not arising
from coupled maps in which the uniqueness of absolutely continuous invariant
measures is lost for all $\delta >0$).

\begin{theorem}
\label{existence1} Suppose that there exists $\pi _{n}:B_{w}\rightarrow
B_{s} $, a linear projection of finite rank $n$ which is a Markov operator
with the following properties: there is $M_{0}\geq 0$ and a decreasing
sequence $a(n)\rightarrow 0$ such that for all $n\geq 0$%
\begin{eqnarray}
||\pi _{n}||_{B_{w}\rightarrow B_{w}} &<&M_{0},  \label{chio1} \\
||\pi _{n}||_{B_{s}\rightarrow B_{s}} &<&M_{0}  \notag
\end{eqnarray}%
and%
\begin{equation}
||\pi _{n}f-f||_{w}\leq a(n)||f||_{s}.  \label{chio}
\end{equation}

Let us suppose that $\pi _{n}(P_{w})\subseteq P_{s}$ and $\pi _{n}(P_{w})$
is bounded in $B_{s}$. Let us fix $\delta \geq 0$ and suppose furthermore
that:

\begin{description}
\item[Exi1] there is $M_{1}\geq 0$ such that $\forall \mu _{1}\in P_{w}$ and 
$f$ $\in P_{w}$ which is a fixed point of $L_{\delta ,\mu _{1}}$ it holds%
\begin{equation*}
||f||_{s}\leq M_{1};
\end{equation*}

\item[Exi1.b] $\forall \mu _{1}\in P_{w},~n\in \mathbb{N}$ and for every $f$ 
$\in P_{w}$ which is a fixed point for the finite rank approximation $\pi
_{n}L_{\delta ,\pi _{n}\mu _{1}}\pi _{n}$ of $\ L_{\delta ,\mu _{1}}$ it
holds%
\begin{equation*}
||f||_{s}\leq M_{1};
\end{equation*}

\item[Exi2] there is $K_{1}\geq 0$ such that $\forall \mu _{1},\mu _{2}\in
P_{w}$%
\begin{equation*}
||L_{\delta ,\mu _{1}}-L_{\delta ,\mu _{2}}||_{B_{s}\rightarrow B_{w}}\leq
\delta K_{1}||\mu _{1}-\mu _{2}||_{w}.
\end{equation*}
\end{description}

Then there is $\mu \in P_{s}$ such that 
\begin{equation*}
\mathcal{L}_{\delta }\mu =\mu .
\end{equation*}
and 
\begin{equation}
||\mu ||_{s}\leq M_{1}.  \label{boundnorm}
\end{equation}
\end{theorem}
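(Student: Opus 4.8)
The plan is to follow the topological strategy announced in the introduction: replace $\mathcal{L}_{\delta}$ by a sequence of finite-dimensional self consistent operators, produce a fixed point of each by Brouwer's theorem, and then pass to a limit. Concretely, I would set
\[
\mathcal{L}_{\delta}^{(n)}(\mu):=\pi _{n}L_{\delta ,\pi _{n}\mu }\,\pi _{n}\mu ,
\]
the finite rank approximation of $\mathcal{L}_{\delta }$ built from $\pi _{n}$. First I would check that $\pi _{n}(P_{w})$ is a nonempty compact convex subset of the finite-dimensional space $\pi _{n}(B_{w})$: convexity comes from linearity of $\pi _{n}$ and convexity of $P_{w}$; boundedness in $B_{s}$ is assumed; and closedness follows because any $||~||_{w}$-limit of elements of $\pi _{n}(P_{w})$ is positive and of mass one (by the standing assumptions on $||~||_{w}$) and is fixed by $\pi _{n}$, hence lies in $\pi _{n}(P_{w})$. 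On this set $\pi _{n}\mu =\mu $, so $\mathcal{L}_{\delta }^{(n)}(\mu )=\pi _{n}L_{\delta ,\mu }\mu \in \pi _{n}(P_{w})\subseteq P_{s}$ because $L_{\delta ,\mu }$ is Markov; thus $\mathcal{L}_{\delta }^{(n)}$ maps $\pi _{n}(P_{w})$ into itself. Continuity on $\pi _{n}(P_{w})$ follows by writing $L_{\delta ,\mu }\mu -L_{\delta ,\nu }\nu =L_{\delta ,\mu }(\mu -\nu )+(L_{\delta ,\mu }-L_{\delta ,\nu })\nu $ and bounding the two terms by $M||\mu -\nu ||_{w}$ and by $\delta K_{1}||\mu -\nu ||_{w}||\nu ||_{s}$ using $(Exi2)$ and the uniform $B_{s}$-bound on $\pi _{n}(P_{w})$. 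Brouwer's fixed point theorem then yields $\mu _{n}\in \pi _{n}(P_{w})$ with $\mu _{n}=\pi _{n}L_{\delta ,\mu _{n}}\mu _{n}$.

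Since $\pi _{n}\mu _{n}=\mu _{n}$, each $\mu _{n}$ is exactly a fixed point of $\pi _{n}L_{\delta ,\pi _{n}\mu _{n}}\pi _{n}$ lying in $P_{w}$, so $(Exi1.b)$ gives the uniform bound $||\mu _{n}||_{s}\leq M_{1}$. The next step is a compactness argument turning this strong bound into weak precompactness: for any $f$ with $||f||_{s}\leq M_{1}$ one has $||f-\pi _{m}f||_{w}\leq a(m)M_{1}$, while $\pi _{m}$ of the $M_{1}$-ball of $B_{s}$ is a bounded subset of the finite-dimensional space $\pi _{m}(B_{w})$ and hence totally bounded in $||~||_{w}$; choosing $m$ with $a(m)M_{1}$ small shows that the $M_{1}$-ball of $B_{s}$ is totally bounded in $B_{w}$. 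Consequently $\{\mu _{n}\}$ has a $||~||_{w}$-Cauchy subsequence, which converges in the complete metric space $P_{w}$ to some $\mu \in P_{w}$.

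It remains to show $\mathcal{L}_{\delta }\mu =\mu $ by passing to the limit along this subsequence. I would estimate, for $n=n_{k}$,
\[
||L_{\delta ,\mu }\mu -\mu ||_{w}\leq ||L_{\delta ,\mu }\mu -L_{\delta ,\mu _{n}}\mu _{n}||_{w}+||L_{\delta ,\mu _{n}}\mu _{n}-\pi _{n}L_{\delta ,\mu _{n}}\mu _{n}||_{w}+||\mu _{n}-\mu ||_{w},
\]
using $\mu _{n}=\pi _{n}L_{\delta ,\mu _{n}}\mu _{n}$. The first term is $\leq (M+\delta K_{1}M_{1})||\mu -\mu _{n}||_{w}$ by the decomposition above together with $(Exi2)$ and $||\mu _{n}||_{s}\leq M_{1}$; the second is $\leq a(n)||L_{\delta ,\mu _{n}}\mu _{n}||_{s}\leq a(n)MM_{1}$ by (\ref{chio}) and the standing $B_{s}\rightarrow B_{s}$ bound; the third tends to $0$. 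Since the left-hand side is independent of $n$ and the right-hand side tends to $0$, we conclude $L_{\delta ,\mu }\mu =\mu $, i.e. $\mu $ is a fixed point of $\mathcal{L}_{\delta }$. Finally, applying $(Exi1)$ with $\mu _{1}=\mu $ to the fixed point $\mu \in P_{w}$ of $L_{\delta ,\mu }$ shows $\mu \in B_{s}$ with $||\mu ||_{s}\leq M_{1}$, hence $\mu \in P_{s}$ and (\ref{boundnorm}) holds. I expect the delicate points to be the compactness step, extracting a convergent subsequence from a bound that lives only in the strong norm, and the simultaneous control, in the limit passage, of the dependence of $L_{\delta ,\mu }$ on its measure argument (via $(Exi2)$) and of the projection error (via (\ref{chio})).
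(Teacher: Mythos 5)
Your proof is correct and follows essentially the same strategy as the paper's: the same finite-rank approximation $\pi _{n}L_{\delta ,\pi _{n}\mu }\pi _{n}$, Brouwer's fixed point theorem on the bounded closed convex set $\pi _{n}(P_{w})$, the uniform strong bound from $(Exi1.b)$, extraction of a weakly convergent subsequence, the same limit passage combining $(Exi2)$, $(\ref{chio})$ and the uniform operator bounds, and finally $(Exi1)$ for $(\ref{boundnorm})$. The only minor variation is the compactness step, where you show directly that the $M_{1}$-ball of $B_{s}$ is totally bounded in $\Vert \cdot \Vert _{w}$, whereas the paper runs a nested diagonal-subsequence argument; both versions invoke the completeness of $P_{w}$ in the same way and are interchangeable.
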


To understand the assumptions made we suggest to think of $B_{w}$ as a weak
space, for example $L^{1}$ and of $B_{s}$ as a stronger space in which
regular fixed points of the linear transfer operators $L_{\delta ,\mu }$ are
contained, for example, in the case of transfer operators associated with
expanding maps, one can think of $B_{s}$ as some Sobolev space. The
projection $\pi _{n}$ allows to reduce the problem to a finite dimensional
one and find fixed points of the finite dimensional reduced operators by the
Brouwer fixed point theorem. In concrete examples $\pi _{n}$ could be a
finite dimensional discretization, as the Ulam discretization or similar.
The assumptions $(Exi1),~(Exi1.b)$ tells that the invariant measures of the
original and discretized operators are unifornmly regular, and can be
verified in concrete examples by showing that these operators satisfy a
common Lasota-Yorke inequality. The assumption $(Exi2)$ \ in some sense says
that the family of operators $L_{\delta ,\mu }$ depends on $\mu $ in a
Lipschitz way, considering a (weak) mixed norm topology. The assumptions
made are then quite natural for a family of transfer operators depending on
a parameter. An interesting corollary of Theorem \ref{existence1} is
Proposition \ref{KB}, establishing a general statement for the existence of
an invariant probability meaure for general continuous maen field coupled
maps, even outside the weak cupling regime.

\begin{proof}[Proof of Theorem \protect\ref{existence1}]
Without loss of generality we can suppose that each operator $L_{\delta ,\mu
}$ is such that $||L_{\delta ,\mu }||_{B_{s}\rightarrow B_{s}}\leq M_{0},$ $%
||L_{\delta ,\mu }||_{B_{w}\rightarrow B_{w}}\leq M_{0}$. \ First we prove
that under the assumptions of the theorem, given a sequence $\mu _{i}$ of
probability measures which is bounded in $B_{s}$ there is $\mu \in P_{w}$
and converging subsequence $\mu _{i_{k}}\rightarrow \mu $, converging in the
weak topology. Indeed let $\mu _{i}$ be such a sequence, with $||\mu
_{i}||_{s}\leq M_{2}$, let us consider $\nu _{n,i}:=\pi _{n}\mu _{i}$. \
Since $\pi _{n}$ is Markov this is a sequence of probability measures. By $(%
\ref{chio1})$\ this sequence is bounded in $B_{w}$ and belongs to the finite
dimensional space $\pi _{n}B_{w}$, then it has a converging subsequence \ $%
\nu _{n,i_{n,k}}\rightarrow \nu _{n}$ where we denote by $i_{n,k}$ a
sequence of indices for which we have this convergence. We remark that this
indices can depend on $n$. \ Since $P_{w}$ is complete for the weak norm
(see Standing assumptions 1) we also have that $\nu _{n}\in P_{w}.$ By $(\ref%
{chio})$, for all $n$ and $k$ we have \ $||\nu _{n,i_{n,k}}-\mu
_{i_{n,k}}||_{w}\leq a(n)M_{2}.$ Without loss of generality we can suppose
that $i_{n,k}$ is a subsequence of $i_{n-1,k}$ \ (suppose we selected the
sequence of indices $i_{n-1,k}$, then we can select the subsequence $\nu
_{n,i_{n,k}}\rightarrow \nu _{n}$ only from the indices belonging to $%
i_{n-1,k}$ since $\nu _{n-1,i_{n-1,k}}$ is also a bounded sequence, and so
on for all $n$ by induction). \ In this case, for $m\geq n$ we have 
\begin{eqnarray*}
||\nu _{m,i_{m,k}}-\nu _{n,i_{m,k}}||_{w} &\leq &||\nu _{m,i_{m,k}}-\mu
_{i_{m,k}}||_{w}+||\nu _{n,i_{m,k}}-\mu _{i_{m,k}}||_{w} \\
&\leq &2a(n)M_{2}.
\end{eqnarray*}%
Since this is true for all $k,$ by taking the limits it holds that $||\nu
_{n}-\nu _{m}||_{w}\leq 2a(n)M_{2}$ and hence $\nu _{n}$ is a Cauchy
sequence of probability measures in $P_{w}$. By the completeness of $P_{w}$,
the sequence $\nu _{n}$ will then converge to some $\nu \in P_{w}$. \ We
also have that $\nu _{k,i_{k,k}}\rightarrow \nu $ in $B_{w}$ and since \ $%
||\nu _{k,i_{k,k}}-\mu _{i_{k,k}}||_{w}\leq a(k)M_{2}$ we also have $\mu
_{i_{k,k}}\rightarrow \nu $ in $B_{w}$, finding a converging subsequence as
claimed.

Let us consider a finite rank approximation of $\mathcal{L}_{\delta },$
defined by%
\begin{equation*}
\mathcal{L}_{\delta ,n}(\mu ):=\pi _{n}L_{\delta ,\pi _{n}\mu }\pi _{n}(\mu
).
\end{equation*}

We now prove that $\mathcal{L}_{\delta ,n}$ is a continuous function $%
P_{w}\rightarrow P_{w}$, indeed let $\mu \in P_{w},\nu \in B_{w}$ such that $%
\mu +\nu \in P_{w},$ we have%
\begin{eqnarray*}
||\mathcal{L}_{\delta ,n}(\mu +\nu )-\mathcal{L}_{\delta ,n}(\mu )||_{w}
&\leq &||\pi _{n}L_{\delta ,\pi _{n}(\mu +\nu )}\pi _{n}(\mu +\nu )-\pi
_{n}L_{\delta ,\pi _{n}\mu }\pi _{n}(\mu )||_{w} \\
&\leq &||\pi _{n}||_{B_{w}\rightarrow B_{w}}[||L_{\delta ,\pi _{n}(\mu +\nu
)}\pi _{n}(\mu +\nu )-L_{\delta ,\pi _{n}\mu }\pi _{n}(\mu +\nu )||_{w} \\
&&+||L_{\delta ,\pi _{n}\mu }\pi _{n}(\mu +\nu )-L_{\delta ,\pi _{n}\mu }\pi
_{n}(\mu )||_{w}] \\
&\leq &||\pi _{n}||_{B_{w}\rightarrow B_{w}}[||L_{\delta ,\pi _{n}(\mu +\nu
)}\pi _{n}(\mu )+L_{\delta ,\pi _{n}(\mu +\nu )}\pi _{n}(\nu ) \\
&&-L_{\delta ,\pi _{n}\mu }\pi _{n}(\mu )-L_{\delta ,\pi _{n}\mu }\pi
_{n}(\nu )||_{w} \\
&&+||L_{\delta ,\pi _{n}\mu }\pi _{n}(\mu +\nu )-L_{\delta ,\pi _{n}\mu }\pi
_{n}(\mu )||_{w}]
\end{eqnarray*}

and%
\begin{equation*}
||L_{\delta ,\pi _{n}\mu }\pi _{n}(\mu +\nu )-L_{\delta ,\pi _{n}\mu }\pi
_{n}(\mu )||_{w}\leq M^{2}||\nu ||_{w}
\end{equation*}%
while using $(Exi2)$%
\begin{eqnarray*}
||L_{\delta ,\pi _{n}(\mu +\nu )}\pi _{n}(\mu )-L_{\delta ,\pi _{n}\mu }\pi
_{n}(\mu )||_{w} &\leq &\delta K_{1}||\pi _{n}\nu ||_{w}||\pi _{n}\mu ||_{s},
\\
||L_{\delta ,\pi _{n}(\mu +\nu )}\pi _{n}(\nu )-L_{\delta ,\pi _{n}\mu }\pi
_{n}(\nu )||_{w} &\leq &2M^{2}||\nu ||_{w}
\end{eqnarray*}%
hence%
\begin{equation*}
||\mathcal{L}_{\delta ,n}(\mu +\nu )-\mathcal{L}_{\delta ,n}(\mu )||_{w}\leq
M[M^{2}+\delta MK_{1}||\pi _{n}\mu ||_{s}+2M^{2}]||\nu ||_{w}.
\end{equation*}

By assumption $\pi _{n}P_{w}$ is bounded in $B_{s}$, this shows that $||\pi
_{n}\mu ||_{s}$ is uniformly bounded as $\mu $ ranges in $P_{w}$ and then $%
\mathcal{L}_{\delta ,n}(\mu )$ is Lipschitz continuous $P_{w}\rightarrow
P_{w}.$ Note that since $P_{w}$ is a convex set, $\pi _{n}P_{w}$ is a finite
dimensional convex and bounded set. Now let us see that this is also a
closed set in $B_{w}$. We will deduce that it is compact. Suppose $p_{i}\in
\pi _{n}P_{w}\subseteq P_{w}$ is a Cauchy sequence for the $B_{w}$ norm.
Since $P_{w}$ is complete this will converge to a point $w$ of $P_{w}$. But $%
\pi _{n}B_{w},$ being a finite dimensional vector space, is a closed space,
then $w\in \pi _{n}B_{w}$. Suppose $w=\pi _{n}(u)$ with $u\in B_{w},$\ since 
$w=\pi _{n}(\pi _{n}u)=\pi _{n}(w)$ and $w\in P_{w}$ then $w\in \pi
_{n}P_{w}.$ By this $\pi _{n}P_{w}$ \ is a closed subspace of $B_{w}.$ Since 
$\pi _{n}P_{w}$ is a bounded, convex and closed \ subset of a finite
dimensional space, then it is homeomorphic to a closed disc (see e.g. \cite%
{Fl}, Corollary 1.1.1). We have that $\mathcal{L}_{\delta ,n}$ is continuous
on $\pi _{n}P_{w}$ and $\mathcal{L}_{\delta ,n}(\pi _{n}P_{w})\subseteq \pi
_{n}P_{w}$. Then by the Brouwer fixed point theorem there is $\mu _{n}\in
\pi _{n}P_{w}$ such that%
\begin{equation*}
\mathcal{L}_{\delta ,n}(\mu _{n})=\mu _{n}.
\end{equation*}

This means that $\pi _{n}L_{\delta ,\pi _{n}\mu _{n}}\pi _{n}(\mu _{n})=\mu
_{n}$ and then by $Exi1.b$ we have that for all $n\in \mathbb{N}$, $||\mu
_{n}||_{s}\leq M_{1}.$ As we proved above $\mu _{n}$ has then a converging
subsequence $\mu _{n_{k}}\rightarrow \hat{\mu}$ \ in the weak norm to some
element $\hat{\mu}\in P_{w}.$

Now let us prove that 
\begin{equation*}
\mathcal{L}_{\delta }(\hat{\mu})=\hat{\mu}.
\end{equation*}

In fact we have for all $k\geq 0$%
\begin{equation}
\mathcal{L}_{\delta }\hat{\mu}=L_{\delta ,\hat{\mu}}(\hat{\mu}-\mu
_{n_{k}})+L_{\delta ,\hat{\mu}}(\mu _{n_{k}}).
\end{equation}

Since $||\hat{\mu}-\mu _{n_{k}}||_{w}\rightarrow 0,$ and the operator $%
L_{\delta ,\hat{\mu}}$ is bounded then%
\begin{equation*}
L_{\delta ,\hat{\mu}}(\mu _{n_{k}})\rightarrow \mathcal{L}_{\delta }\hat{\mu}
\end{equation*}%
in the weak norm. By $Exi2$ 
\begin{equation*}
||L_{\delta ,\hat{\mu}}(\mu _{n_{k}})-L_{\delta ,\mu _{n_{k}}}(\mu
_{n_{k}})||_{w}\leq \delta K_{1}||\hat{\mu}-\mu _{n_{k}}||_{w}||\mu
_{n_{k}}||_{s}
\end{equation*}%
which by $Exi1.b$ becomes%
\begin{equation*}
||L_{\delta ,\hat{\mu}}(\mu _{n_{k}})-L_{\delta ,\mu _{n_{k}}}(\mu
_{n_{k}})||_{w}\leq \delta K_{1}M_{1}||\hat{\mu}-\mu _{n_{k}}||_{w}
\end{equation*}%
and then 
\begin{equation*}
L_{\delta ,\mu _{n_{k}}}(\mu _{n_{k}})\rightarrow \mathcal{L}_{\delta }\hat{%
\mu}
\end{equation*}%
in the weak norm. \ Since $\mu _{n_{k}}=\pi _{n_{k}}\mu _{n_{k}}$ we also
have that 
\begin{equation*}
L_{\delta ,\mu _{n_{k}}}(\mu _{n_{k}})-\mathcal{L}_{\delta ,n_{k}}\mu
_{n_{k}}=L_{\delta ,\mu _{n_{k}}}(\mu _{n_{k}})-\pi _{n_{k}}L_{\delta ,\mu
_{n_{k}}}(\mu _{n_{k}})
\end{equation*}%
and then by $(\ref{chio})$%
\begin{eqnarray*}
||L_{\delta ,\mu _{n_{k}}}(\mu _{n_{k}})-\mathcal{L}_{\delta ,n_{k}}\mu
_{n_{k}}||_{w} &\leq &a(n_{k})||L_{\delta ,\mu _{n_{k}}}\mu _{n_{k}}||_{s} \\
&\leq &a(n_{k})MM_{1}\rightarrow 0.
\end{eqnarray*}

We then proved that%
\begin{equation*}
\mathcal{L}_{\delta ,n_{k}}\mu _{n_{k}}\rightarrow \mathcal{L}_{\delta }\hat{%
\mu}.
\end{equation*}

Since%
\begin{equation*}
\mathcal{L}_{\delta ,n_{k}}\mu _{n_{k}}=\mu _{n_{k}}
\end{equation*}%
then we get%
\begin{equation*}
\hat{\mu}=\lim_{k\rightarrow \infty }\mu _{n_{k}}=\mathcal{L}_{\delta }(\hat{%
\mu})
\end{equation*}%
proving the invariance of $\hat{\mu}.$

Now we are only left to prove that \ $||\hat{\mu}||_{s}\leq M_{1}$. Since $%
L_{\delta ,\hat{\mu}}(\hat{\mu})=\hat{\mu}$ this directly follows from $%
(Exi1)$.
\end{proof}

Theorem \ref{existence1} gives general, sufficient conditions for the
existence of the invariant probability measure of a self-consistent
operator, but it is hard to apply it constructively to approximate the
invariant measure. Furthermore it does not give information about the
uniqueness.

Now we prove a kind of constructive existence and uniqueness result in the
case of weak coupling. \ Before stating the result, as a general remark on
the uniqueness of the invariant probability measure we show that when $%
\delta $ is small and the operators $L_{\delta ,\mu }$ are statistically
stable in some sense, different invariant probability measures in $B_{w}$ of
the associated $\mathcal{L}_{\delta }$ must be near each other. Indeed,
suppose that each operator of the family $L_{\delta ,\mu }$ with $\delta
\geq 0$ and $\mu \in P_{w}$ has a unique fixed probability measure in $B_{w}$
which we denote by $f_{\mu }\in P_{w}$ and suppose there is $F:\mathbb{%
R\rightarrow R}$ such that $\forall \mu _{1},\mu _{2}\in P_{w}$%
\begin{equation*}
||f_{\mu _{1}}-f_{\mu _{2}}||_{w}\leq F(\delta ).
\end{equation*}%
\ If $\mu $, $\nu \in P_{w}$ are invariant measures for $\mathcal{L}_{\delta
},$ this implies that $\mu =f_{\mu }$ and $\nu =f_{\nu }$. Then we have%
\begin{equation*}
||\mu -\nu ||_{w}=||f_{\nu }-f_{\mu }||_{w}\leq F(\delta ).
\end{equation*}%
In the case $\lim_{\delta \rightarrow 0}F(\delta )=0$ we see that when $%
\delta $ is small different invariant measures of $\mathcal{L}_{\delta }$
must be near each other. \ In the following statement we then suppose a
strong stability property (see $(Exi3)$) for the invariant measures of the
operators $L_{\delta ,\mu }$ as $\mu $ vary.

\begin{theorem}
\label{existence} Suppose there is $\overline{\delta }\geq 0$ such that for
all \ $0\leq \delta <\overline{\delta }$ the family $L_{\delta ,\mu }$
satisfies $(Exi1)$ and $(Exi2)$ uniformly (with the same constants for each
such $\delta $). Suppose that $P_{w}$ contains some probability measure $\mu 
$ with $||\mu ||_{w}\leq M_{1}$ (where $M_{1}$ is the constant coming from $%
(Exi1)$). Suppose that for all \ $0\leq \delta <\overline{\delta }$ \ and $%
\mu \in P_{w}$ with $||\mu ||_{w}\leq M_{1}$, $L_{\delta ,\mu }$ has a
unique fixed probability measure in $P_{w}$ which we denote by $f_{\mu }.$
Suppose furthermore that the family $L_{\delta ,\mu }$ satisfies the
following:

\begin{description}
\item[Exi3] there is $K_{2}\geq 1$ such that $\forall \mu _{1},\mu _{2}\in
P_{w}$ with $\max (||\mu _{1}||_{w},||\mu _{2}||_{w})\leq M_{1}$ 
\begin{equation*}
||f_{\mu _{1}}-f_{\mu _{2}}||_{w}\leq \delta K_{2}||\mu _{1}-\mu _{2}||_{w}.
\end{equation*}
\end{description}

Then for all $0\leq \delta \leq \min (\overline{\delta },\frac{1}{K_{2}})$,
there is a unique $\mu \in P_{w}$ \ such that%
\begin{equation*}
\mathcal{L}_{\delta }(\mu )=\mu .
\end{equation*}%
Furthermore $\mu =\lim_{k\rightarrow \infty }\mu _{k}$ where $\mu _{k}$ is
any sequence defined inductively in the following way: let\ $\mu _{0}$ be
some probability measure in $P_{w}$ with $||\mu _{0}||_{w}\leq M_{1}$, then $%
\mu _{1}$ is the fixed probability measure of $L_{\delta ,\mu _{0}},$ $\mu
_{i}$ in $\ P_{w}$; $\mu _{i}$ is the fixed probability measure of $%
L_{\delta ,\mu _{i-1}}$ in $\ P_{w}$ and so on.
\end{theorem}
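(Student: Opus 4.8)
The plan is to reduce the nonlinear fixed point problem for $\mathcal{L}_\delta$ to an ordinary Banach contraction on a suitable closed ball of measures. The key device is the \emph{fixed point selection map}: for $\mu \in P_w$ with $||\mu||_w \leq M_1$ the hypotheses guarantee that $L_{\delta,\mu}$ has a unique fixed probability measure $f_\mu \in P_w$, so one may set $\Gamma(\mu) := f_\mu$. The crucial observation is that fixed points of $\mathcal{L}_\delta$ coincide with fixed points of $\Gamma$: if $\mathcal{L}_\delta(\mu) = \mu$ then $L_{\delta,\mu}(\mu) = \mu$, so $\mu$ is itself the (unique) fixed probability measure of $L_{\delta,\mu}$, that is $\mu = f_\mu = \Gamma(\mu)$; conversely $\Gamma(\mu) = \mu$ means $\mu = f_\mu$ is fixed by $L_{\delta,\mu}$, i.e. $\mathcal{L}_\delta(\mu) = \mu$. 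I would then work on the set $C := \{\mu \in P_w : ||\mu||_w \leq M_1\}$, which is nonempty by the standing hypothesis and, being a closed subset of the complete metric space $P_w$, is itself complete.

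Next I would verify the two requirements of the contraction mapping principle for $\Gamma$ on $C$. First, $\Gamma$ maps $C$ into itself: for $\mu \in C$, $f_\mu$ is a fixed point of $L_{\delta,\mu}$, so $(Exi1)$ gives $||f_\mu||_s \leq M_1$, and the standing inequality $||~||_w \leq ||~||_s$ yields $||\Gamma(\mu)||_w = ||f_\mu||_w \leq M_1$, hence $\Gamma(\mu) \in C$. Second, $\Gamma$ is Lipschitz: for $\mu_1, \mu_2 \in C$ the bound $\max(||\mu_1||_w, ||\mu_2||_w) \leq M_1$ lets us apply $(Exi3)$, giving
\[
||\Gamma(\mu_1) - \Gamma(\mu_2)||_w = ||f_{\mu_1} - f_{\mu_2}||_w \leq \delta K_2 ||\mu_1 - \mu_2||_w ,
\]
so for $\delta < 1/K_2$ the factor $\delta K_2 < 1$ and $\Gamma$ is a genuine contraction on $C$. (Here $(Exi2)$ underlies the well-posedness of the family, but the decisive estimate is $(Exi3)$.)

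With these in hand the Banach fixed point theorem supplies a unique fixed point $\mu^* \in C$ of $\Gamma$, and the Picard iterates $\mu_k := \Gamma(\mu_{k-1}) = f_{\mu_{k-1}}$ starting from any $\mu_0 \in C$ converge to $\mu^*$ in $||~||_w$; this is exactly the inductive sequence described in the statement, and the constraint $||\mu_0||_w \leq M_1$ is precisely the requirement $\mu_0 \in C$. By the equivalence established above, $\mu^*$ is the desired fixed point of $\mathcal{L}_\delta$. To promote uniqueness from $C$ to all of $P_w$ I would note that any $\mu \in P_w$ with $\mathcal{L}_\delta(\mu) = \mu$ is automatically in $C$: such a $\mu$ is a fixed probability measure of $L_{\delta,\mu}$, so $(Exi1)$ forces $||\mu||_s \leq M_1$ and hence $||\mu||_w \leq M_1$. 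Thus every fixed point of $\mathcal{L}_\delta$ in $P_w$ lies in $C$ and must coincide with $\mu^*$.

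Structurally this is a clean application of Banach's theorem, so the effort is bookkeeping rather than a single deep obstacle: the points needing care are checking that the selection map $\Gamma$ is well defined and self-mapping on $C$ (where $(Exi1)$ together with $||~||_w \leq ||~||_s$ is essential) and the upgrade of uniqueness from the ball $C$ to all of $P_w$. The one genuinely delicate point is the boundary value $\delta = 1/K_2$ permitted in the statement, where $\delta K_2 = 1$ and $\Gamma$ is only non-expansive: there I would either argue in the open regime $\delta < \min(\overline{\delta}, 1/K_2)$, in which the contraction is strict and $f_\mu$ is guaranteed to exist, and recover the endpoint by a continuity-in-$\delta$ limiting argument, or exploit any strictness available from the constraint $\delta < \overline{\delta}$ under which the fixed probability measures $f_\mu$ are defined.
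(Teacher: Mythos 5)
Your proposal is correct and follows the same core strategy as the paper: both iterate the fixed-point selection map $\mu \mapsto f_\mu$ starting from an arbitrary $\mu_0$ in the ball $C=\{\mu\in P_w : \|\mu\|_w\le M_1\}$, use $(Exi1)$ together with $\|\cdot\|_w\le\|\cdot\|_s$ to keep the iterates in $C$, use $(Exi3)$ to get geometric convergence of the iterates, and obtain uniqueness in all of $P_w$ by noting that any fixed point of $\mathcal{L}_\delta$ equals $f_\mu$ for itself and lies in $C$ by $(Exi1)$, so $(Exi3)$ forces two such fixed points to coincide. The one genuine difference is how the limit is shown to be invariant. The paper verifies $L_{\delta,f}(f)=f$ by a direct estimate: it writes $L_{\delta,f}(f_k)=[L_{\delta,f}-L_{\delta,f_{k-1}}](f_k)+f_k$ and kills the bracket using $(Exi2)$ together with the uniform bound $\|f_k\|_s\le M_1$ from $(Exi1)$ and the boundedness of $L_{\delta,f}$ on $B_w$. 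You instead observe that fixed points of $\Gamma$ and of $\mathcal{L}_\delta$ coincide on $C$, so invariance of the limit follows from continuity of $\Gamma$, which is automatic from the $(Exi3)$ contraction; in your route $(Exi2)$ is never used, which shows it is dispensable for this particular theorem once unique fixed points $f_\mu$ are assumed to exist for each $\mu\in C$. The small price is checking that $C$ is closed in $P_w$, hence complete, which you do. Finally, your concern about the endpoint $\delta=\min(\overline{\delta},1/K_2)$ is well placed but does not put you behind the paper: its proof simply declares "without loss of generality $K_2\overline{\delta}<1$" and takes $\delta\le\overline{\delta}$, i.e., it too only establishes the strictly contractive regime, so arguing for $\delta<\min(\overline{\delta},1/K_2)$ matches what is actually proved there.
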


While the assumptions $(Exi1)$, $(Exi2)$ and the uniqueness of the fixed
probability measure in $P_{w}$ for the family $L_{\delta ,\mu }$ can be
easily verified for a large class of examples, including families of
transfer operators coming from piecewise expanding maps, the assumption $%
(Exi3)$ imposes some stronger requirements on the kind of systems we can
consider when applying this statement.

The assumption $(Exi3)$ correspond to a Lipschitz quantitative stability for
the fixed points of the operators in the family $L_{\delta ,\mu }$ when the
operators are perturbed by changing $\mu $. This is a strong assumption
which is however satisfied for many interesting systems, as expanding and
uniformly hyperbolic or many random ones, but it is not satisfied for other
systems like piecewise expanding maps for perturbations changing their
turning points. We remark that indeed self-consistent transfer operators
arising from piecewise expanding maps show a complicated behavior from the
point of view of the uniqueness of the invariant measure (\cite{Se2}).

\begin{proof}[Proof of Theorem \protect\ref{existence}]
Let us consider $\overline{\delta }$ such that%
\begin{equation}
0<\overline{\delta }<K_{2}^{-1}.  \label{deltachose1}
\end{equation}%
\ Let us consider some $0<\delta \leq \overline{\delta }.$ Let $f_{0}\in
P_{w}$ with $||f_{0}||_{w}\leq M_{1}$. Let $f_{1}\in P_{s}$ be the fixed
probability measure of $L_{\delta ,f_{0}}$ in $B_{w}$, again $%
||f_{1}||_{w}\leq M_{1}$. Now, $L_{\delta ,f_{1}}$ has a fixed probability
measure which we will denote by $f_{2}.$ We also have $||f_{2}||_{w}\leq
M_{1}$. By $(Exi3)$%
\begin{equation*}
||f_{1}-f_{2}||_{w}\leq \delta K_{2}||f_{0}-f_{1}||_{w}.
\end{equation*}

Now let us consider the linear operator $L_{\delta ,f_{2}}$, this operator
has a fixed probability measure $f_{3}\in B_{s}$ with \ $||f_{3}||_{w}\leq
M_{1}$. We get 
\begin{equation*}
||f_{3}-f_{2}||_{w}\leq K_{2}\delta ||f_{2}-f_{1}||_{w}\leq (K_{2}\delta
)^{2}||f_{0}-f_{1}||_{w}.
\end{equation*}

Continuing as before, this will lead to a new fixed probability measure $%
f_{4}$ with $||f_{4}-f_{3}||_{w}\leq (K_{2}\delta )^{3}||f_{0}-f_{1}||_{w}$
and so on, defining a sequence $f_{k}$ with $||f_{k}||_{w}\leq M_{1}$ and $%
||f_{k}-f_{k-1}||_{w}\leq (K_{2}\delta )^{k-1}||f_{0}-f_{1}||_{w}$. Since $%
(K_{2}\delta )^{k}$ is summable, $f_{k}$ is a Cauchy sequence in $P_{w}.$

Since $P_{w}$ is complete this sequence has a limit. Let $%
f:=\lim_{k\rightarrow \infty }f_{k}\in P_{w}.$ By $(Exi1),$ $f_{k}$ is also
uniformly bounded in $B_{s}.$ Now we can prove that $\mathcal{L}_{\delta
}(f)=L_{\delta ,f}(f)=f$. Indeed%
\begin{eqnarray*}
L_{\delta ,f}(f) &=&L_{\delta ,f}(\lim_{k\rightarrow \infty }f_{k}) \\
&=&\lim_{k\rightarrow \infty }L_{\delta ,f}(f_{k})
\end{eqnarray*}

because of the continuity of $L_{\delta ,f}$ in the weak norm. Furthermore%
\begin{eqnarray*}
\lim_{k\rightarrow \infty }L_{\delta ,f}(f_{k}) &=&\lim_{k\rightarrow \infty
}L_{\delta ,f}(f_{k})-L_{\delta ,f_{k-1}}(f_{k})+L_{\delta ,f_{k-1}}(f_{k})
\\
&=&\lim_{k\rightarrow \infty }L_{\delta ,f}(f_{k})-L_{\delta
,f_{k-1}}(f_{k})+f_{k}
\end{eqnarray*}

because $L_{\delta ,f_{k-1}}(f_{k})=f_{k}.$ However, by $(Exi2)$ there is $%
K_{1}\geq 0$ such that $||L_{\delta ,\mu _{1}}-L_{\delta ,\mu
_{2}}||_{B_{s}\rightarrow B_{w}}\leq K_{1}\delta ||\mu _{1}-\mu _{2}||_{w}$
and using this together with $(Exi1)$ we get%
\begin{eqnarray*}
||L_{\delta ,f}(f_{k})-L_{\delta ,f_{k-1}}(f_{k})||_{w} &\leq &K_{1}\delta
||f-f_{k-1}||_{w}||f_{k}||_{s} \\
&\leq &K_{1}\delta M_{1}||f-f_{k-1}||_{w}\underset{k\rightarrow \infty }{%
\rightarrow }0.
\end{eqnarray*}

Then in the $B_{w}$ topology 
\begin{equation*}
L_{\delta ,f}(f)=\lim_{k\rightarrow \infty }L_{\delta
,f}(f_{k})=\lim_{k\rightarrow \infty }f_{k}=f.
\end{equation*}

Regarding the uniqueness, suppose \ $\mu _{1},\mu _{2}\in P_{w}$ are
invariant \ for $\mathcal{L}_{\delta }$. Then $L_{\delta ,\mu _{1}}(\mu
_{1})=\mu _{1}$ and $L_{\delta ,\mu _{2}}(\mu _{2})=\mu _{2}.$ By $(Exi1)$
we have $\max (||\mu _{1}||_{w},||\mu _{2}||_{w})\leq M_{1}$ and then\ \ by $%
(Exi3)$ \ we have $||\mu _{1}-\mu _{2}||_{w}\leq \delta K_{2}||\mu _{1}-\mu
_{2}||_{w}$, implying $||\mu _{1}-\mu _{2}||_{w}=0$ \ because $K_{2}\delta
<1 $.
\end{proof}

\begin{remark}
The way the fixed point $f$ is found in the previous proof is constructive,
provided we have a mean of finding the invariant measures of the various
operators $L_{\delta ,f_{k}}$ (which is possible in many interesting cases
by some suitable finite element reduction). In this case $f$ can be
approximated by the sequence $f_{k}\rightarrow f$ and the proof also
provides an explicit way to estimate the convergence rate of this sequence,
which is exponential.
\end{remark}

\section{Self-consistent operators, exponential convergence to equilibrium 
\label{sec1}}

Theorems \ref{existence1} and \ref{existence} give information about the
existence of fixed probability measures for the self-consistent operators
but gives no information on whether they are attractive fixed points. In
this section we address this question, giving general sufficient conditions
for this to hold. In the case where the invariant probability measure is
attractive we have that the associated system has convergence to equilibrium
in some sense, since iterates of some initial probability measure will
converge to the invariant one. It is important to estimate the speed of this
convergence. In the case of weak coupling we will show a set of general
conditions implying exponential speed of convergence to equilibrium for
self-consistent transfer operators.

\noindent \textbf{Standing assumptions 2. }In this section we will consider
a setup similar to the one in the previous section, with strong and weak
spaces $B_{s}$ and $B_{w}$ and a family of Markov bounded operators $%
L_{\delta ,\mu }$ satistyfing the General Standing assumptions and the
Standing assumptions 1 stated at beginning of Section \ref{exsec}. We will
also consider a stronger space $(B_{ss},||~||_{ss})$ with norm satisfying $%
||~||_{ss}\geq ||~||_{s}$. We denote by $P_{ss}$ the set of probability
measures in $B_{ss}$. We will suppose that for all $\mu \in P_{w}$ and $%
\delta \geq 0$ the operators $L_{\delta ,\mu }:B_{ss}\rightarrow B_{ss}$ are
bounded and that $P_{w}$ is a bounded set for the $B_{w}$ norm. We will
consider furthermore the following assumptions:

\begin{itemize}
\item[$Con1$] The operators $L_{\delta ,\mu }$ satisfy a common "one step"
Lasota Yorke inequality. There are constants $\hat{\delta},B,\lambda
_{1}\geq 0$ with $\lambda _{1}<1$ such that for all $f\in B_{s},$ $\mu \in
P_{w},$ $0\leq \delta \leq \hat{\delta}$%
\begin{equation}
\left\{ 
\begin{array}{c}
||L_{\delta ,\mu }f||_{w}\leq ||f||_{w} \\ 
||L_{\delta ,\mu }f||_{s}\leq \lambda _{1}||f||_{s}+B||f||_{w}.%
\end{array}%
\right.  \label{1}
\end{equation}

\item[$Con2$] The family of operators satisfy an extended $(Exi2)$ \
property: \ there is $K\geq 1$ such that for all $f\in B_{s},$ $\mu ,\nu \in
P_{w}$, $0\leq \delta \leq \hat{\delta}$%
\begin{equation}
||(L_{\delta ,\mu }-L_{\delta ,\nu })(f)||_{B_{s}\rightarrow B_{w}}\leq
\delta K||\mu -\nu ||_{w}  \label{nn}
\end{equation}%
and $\forall f\in B_{ss},$ $\mu ,\nu \in P_{w}$ 
\begin{equation*}
||(L_{\delta ,\mu }-L_{\delta ,\nu })(f)||_{B_{ss}\rightarrow B_{s}}\leq
\delta K||\mu -\nu ||_{w}.
\end{equation*}%
We remark that by $(\ref{nn})$, when $\delta =0$ $\ \ L_{\delta ,\mu },$ $%
L_{\delta ,\nu }:B_{s}\rightarrow B_{s}$ are identical operators for all $%
\mu ,\nu \in B_{w}$. We hence denote this operator as $L_{0}$. \ We also
suppose that for all $f\in B_{s},$ $\nu \in P_{w}$, $0\leq \delta \leq \hat{%
\delta}$ 
\begin{equation}
||(L_{0}-L_{\delta ,\nu })(f)||_{B_{s}\rightarrow B_{w}}\leq \delta K||\nu
||_{w}.  \label{mm}
\end{equation}

\item[$Con3$] The operator $L_{0}:B_{s}\rightarrow B_{s}$ has convergence to
equilibrium: there exists $a_{n}\geq 0$ with $a_{n}\rightarrow 0$ such that
for all $n\in \mathbb{N}$ and $v\in V_{s}$%
\begin{equation}
||L_{0}^{n}(v)||_{w}\leq a_{n}||v||_{s}  \label{3}
\end{equation}%
where%
\begin{equation*}
V_{s}=\{\mu \in B_{s}|\mu (X)=0\}\footnote{%
We recall that since $\mu \rightarrow \mu (X)$ is continuous, $V_{s}$ is
closed. Furthermore $\forall \mu \in P_{w},$ $L_{\delta ,\mu
}(V_{s})\subseteq V_{s}.$ }.
\end{equation*}
\end{itemize}

We remark that the assumption $(Con1)$ implies that the family of operators $%
L_{\delta ,\mu }$ is uniformly bounded when acting on $B_{s}$ and on $B_{w} $
\ as $\mu $ varies in $P_{w}.$

We also remark that the convergence to equilibrium assumption is sometimes
not trivial to be proved in a given system, but it is somehow expected in
systems having some sort of indecomposability and chaotic behavior (for
instance some kind of topological mixing, expansion, hyperbolicity or
presence of noise, see also Remark \ref{rmk37}).

The following statement estimates the speed of convergence to equilibrium
for self-consistent transfer operators \ $\mathcal{L}_{\delta }$ when $%
\delta $ is small.

\begin{theorem}
\label{expco} Let $L_{\delta ,\mu }$ be a family of \ Markov operators
satisfying the Standing assumptions 2 (including $(Con1),...,(Con3)$) for
some $\hat{\delta}>0$ and that 
\begin{equation}
\sup_{\mu \in P_{w},\delta \leq \hat{\delta}}||L_{\delta ,\mu
}||_{B_{ss}\rightarrow B_{ss}}<+\infty .  \label{supp1}
\end{equation}%
Let us consider for all\ $\delta \leq \hat{\delta}$ the self-consistent
operator $\mathcal{L}_{\delta }$ defined as in $(\ref{cupled})$, suppose
that for each such $\delta $ there is an invariant probability measure $\mu
_{\delta }\in P_{ss}$ for $\mathcal{L}_{\delta }$ and suppose that%
\begin{equation}
\sup_{\delta \leq \hat{\delta}}||\mu _{\delta }||_{ss}<+\infty .
\label{supp2}
\end{equation}%
Then there exists $\overline{\delta }$ such that $0<\overline{\delta }<\hat{%
\delta}$ and there are $C,\gamma \geq 0$ such that for all $n\in \mathbb{N}$%
, $0<\delta <\overline{\delta },$\ $\nu \in P_{ss}$ we have%
\begin{equation}
||\mathcal{L}_{\delta }^{n}(\nu )-\mu _{\delta }||_{s}\leq Ce^{-\gamma
n}||\nu -\mu _{\delta }||_{s}.  \label{spe}
\end{equation}
\end{theorem}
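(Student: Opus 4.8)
The plan is to linearize the self consistent dynamics around its fixed point and to show that a suitable iterate of $\mathcal{L}_{\delta}$ contracts the strong distance to $\mu_{\delta}$. Set $\nu_{n}:=\mathcal{L}_{\delta}^{n}(\nu)$ and $z_{n}:=\nu_{n}-\mu_{\delta}$. Since each $L_{\delta,\mu}$ is Markov and $\nu,\mu_{\delta}$ are probability measures, $z_{n}\in V_{s}$ for every $n$. Using $\mathcal{L}_{\delta}(\mu)=L_{\delta,\mu}(\mu)$ and $\mu_{\delta}=L_{\delta,\mu_{\delta}}(\mu_{\delta})$, I would first record the fundamental recursion
\begin{equation*}
z_{n+1}=L_{\delta,\nu_{n}}(z_{n})+R_{n},\qquad R_{n}:=(L_{\delta,\nu_{n}}-L_{\delta,\mu_{\delta}})(\mu_{\delta}),
\end{equation*}
in which the linear part transports $z_{n}$ while the nonlinear self interaction is confined to the remainder $R_{n}$. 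By $(Con2)$ applied with $f=\mu_{\delta}$, together with $\|\mu_{\delta}\|_{ss}$ bounded by $(\ref{supp2})$, one gets $\|R_{n}\|_{w}\leq \delta K M_{s}\|z_{n}\|_{w}$ and $\|R_{n}\|_{s}\leq \delta K M_{ss}\|z_{n}\|_{w}$, so $R_{n}$ is $O(\delta)$ when measured against the weak norm of $z_{n}$.

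The core step is a uniform strong-norm contraction for the genuinely linear products $\Phi_{N}:=L_{\delta,\nu_{N-1}}\cdots L_{\delta,\nu_{0}}$, valid for any admissible sequence $\nu_{0},\dots,\nu_{N-1}\in P_{w}$. The obstacle here is that the transported operator changes at every step, so I cannot invoke a fixed spectral gap; instead I compare $\Phi_{N}$ with $L_{0}^{N}$ through the telescoping identity $\Phi_{N}-L_{0}^{N}=\sum_{k=0}^{N-1}L_{0}^{\,N-1-k}(L_{\delta,\nu_{k}}-L_{0})\Phi_{k}$. Bounding each summand in the weak norm by $(\ref{mm})$, using $\|L_{0}^{\,N-1-k}\|_{B_{w}\to B_{w}}\le 1$ and the uniform Lasota--Yorke bound $\|\Phi_{k}z_{0}\|_{s}\le C_{LY}\|z_{0}\|_{s}$ coming from $(Con1)$, yields $\|\Phi_{N}z_{0}-L_{0}^{N}z_{0}\|_{w}\le N\delta K D\,C_{LY}\|z_{0}\|_{s}$, where $D=\sup_{P_{w}}\|\cdot\|_{w}$. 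Combined with the weak convergence to equilibrium $(Con3)$, $\|L_{0}^{N}z_{0}\|_{w}\le a_{N}\|z_{0}\|_{s}$, this makes $\Phi_{N}z_{0}$ small in the weak norm, controlled by $\|z_{0}\|_{s}$. I then upgrade this to a strong-norm contraction: applying the Lasota--Yorke inequality $(Con1)$ over a final block of length $N$ and feeding in the weak bounds just obtained produces $\|\Phi_{2N}z_{0}\|_{s}\le[\lambda_{1}^{N}C_{LY}+\tfrac{B}{1-\lambda_{1}}\tilde b_{N}(\delta)]\|z_{0}\|_{s}$ with $\tilde b_{N}(\delta)\to 0$ as $N\to\infty$ and then $\delta\to 0$. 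Fixing $N$ large and then $\delta$ small makes this factor less than $\tfrac12$; set $M:=2N$.

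It remains to reinstate the nonlinear remainder. Unfolding the recursion over $M$ steps gives $z_{M}=\Phi_{M}z_{0}+E_{M}$ with $E_{M}=\sum_{j=0}^{M-1}(L_{\delta,\nu_{M-1}}\cdots L_{\delta,\nu_{j+1}})R_{j}$. Each $R_{j}\in V_{s}$, and estimating these products in the strong norm by the Lasota--Yorke inequality, together with the two bounds on $\|R_{j}\|_{s},\|R_{j}\|_{w}$ above and the uniform weak bound $\|z_{j}\|_{w}\le(1+\delta K M_{s})^{M}\|z_{0}\|_{w}$, shows $\|E_{M}\|_{s}\le C\delta\,\|z_{0}\|_{s}$. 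Shrinking $\overline{\delta}$ further so that $\|\Phi_{M}z_{0}\|_{s}+\|E_{M}\|_{s}\le\theta\|z_{0}\|_{s}$ with $\theta:=\tfrac34<1$ for all $0<\delta<\overline{\delta}$, I obtain the block contraction $\|z_{M}\|_{s}\le\theta\|z_{0}\|_{s}$, uniformly in the starting point and in $\delta$. Iterating on blocks gives $\|z_{kM}\|_{s}\le\theta^{k}\|z_{0}\|_{s}$, and a single Lasota--Yorke estimate over the at most $M$ leftover steps interpolates to arbitrary $n$, producing $\|\mathcal{L}_{\delta}^{n}(\nu)-\mu_{\delta}\|_{s}\le Ce^{-\gamma n}\|\nu-\mu_{\delta}\|_{s}$ with $\gamma=-\tfrac1M\ln\theta$, which is $(\ref{spe})$. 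The main difficulty throughout is the interplay between the per-step change of the linear operator and the nonlinear feedback $R_{n}$; both are tamed by the observation that in the mixed strong-to-weak norm they are of size $O(\delta)$, so the weak contraction coming from $(Con3)$ survives the perturbation and is then transferred to the strong norm through the common Lasota--Yorke inequality.
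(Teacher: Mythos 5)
Your proof is correct, and its skeleton coincides with the paper's: both arguments view $\mathcal{L}_{\delta}^{n}(\nu)$ as a sequential composition of the linear operators $L_{\delta,\nu_{j}}$, decompose $\mathcal{L}_{\delta}^{n}(\nu)-\mu_{\delta}$ into that composition applied to $\nu-\mu_{\delta}$ plus an accumulated nonlinear feedback term, and show via $(Con2)$ and the $ss$-bound (\ref{supp2}) that the feedback is $O(\delta)$ in mixed norms, so the linear contraction survives for $\delta$ small. In fact your remainder $E_{M}=\sum_{j}(L_{\delta,\nu_{M-1}}\cdots L_{\delta,\nu_{j+1}})R_{j}$ is exactly the telescoped form of the paper's second summand $L(n)(\mu_{\delta})-L_{\delta,\mu_{\delta}}^{n}(\mu_{\delta})$ in (\ref{bivio}), which the paper bounds by induction, and your growth estimate $\Vert z_{j}\Vert_{w}\leq(1+\delta K M_{s})^{M}\Vert z_{0}\Vert_{w}$ mirrors the paper's $(\delta K M_{1}+1)^{n}$ bound. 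Where you genuinely diverge is in how the contraction of the linear sequential composition is established. The paper invokes Theorem \ref{prop1}: it forms a $2\times 2$ matrix from the Lasota--Yorke and comparison estimates, takes the leading eigenvector $(a,b)$ (Lemma \ref{matrix}), and contracts the balanced norm $a\Vert\cdot\Vert_{s}+b\Vert\cdot\Vert_{w}$ every $n_{1}$ steps, converting back to $\Vert\cdot\Vert_{s}$ at the end. You instead run a two-block bootstrap: one block of length $N$ to gain weak-norm smallness, via your telescoping comparison of $\Phi_{N}$ with $L_{0}^{N}$ (a re-derivation of Lemma \ref{XXX}) combined with $(Con3)$, and a second block in which the common Lasota--Yorke inequality $(Con1)$ converts that weak smallness into a strong-norm contraction factor below $\tfrac12$. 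Your route is more elementary -- no eigenvector or auxiliary norm is needed, and the contraction lands directly in $\Vert\cdot\Vert_{s}$ -- at the modest cost of doubling the block length and slightly cruder constants; the paper's balanced-norm formulation is sharper in bookkeeping and is packaged as a stand-alone statement on sequential compositions (Theorem \ref{prop1}) that the paper can and does cite independently.
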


We remark that the convergence speed estimates provided in $(\ref{spe})$ are
in the strong norm. These estimates are uniform for $\delta $ small enough
and uniform in $\nu $. We also remark that since there is the strong norm on
both sides of the inequality, $(\ref{spe})$ is similar to a spectral gap
estimate, rather than a \ convergence to equilibrium estimate (where the
regularity of the measure is estimated in the strong norm and the
convergence is in the weak one, resulting in a weaker estimate).

Before the proof of Theorem \ref{expco} we prove several results on the
convergence to equilibrium of a sequential composition of operators in the
family $L_{\delta ,\mu }$. In particular it will be useful to prove a Lasota
Yorke inequality for such a composition.

\begin{lemma}
\label{lasotaY copy(1)}Let $L_{\delta ,\mu }$ be a family of Markov
operators \ satisfying $(Con1)$. Let $\mu _{1},...,\mu _{n}\in P_{w}$ and 
\begin{equation}
L(n):=L_{\delta ,\mu _{n}}\circ L_{\delta ,\mu _{n-1}}\circ ...\circ
~L_{\delta ,\mu _{1}}  \label{Ln}
\end{equation}%
be a sequential composition of operators in such family, then%
\begin{equation}
||L(n)f\Vert _{w}\leq ||f\Vert _{w}
\end{equation}%
and%
\begin{equation}
||L(n)f\Vert _{s}\leq \lambda _{1}^{n}\Vert f\Vert _{s}+\frac{B}{1-\lambda
_{1}}\Vert f\Vert _{w}.  \label{lyw}
\end{equation}
\end{lemma}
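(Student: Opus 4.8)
The plan is to prove the two inequalities for the sequential composition $L(n)$ by induction on $n$, using the one-step Lasota-Yorke inequality $(Con1)$ as the inductive step. The base case $n=1$ is exactly $(Con1)$. The weak-norm bound is immediate: since each operator satisfies $\|L_{\delta,\mu}f\|_w \le \|f\|_w$, composing them gives $\|L(n)f\|_w \le \|L(n-1)f\|_w \le \|f\|_w$ at once, with no accumulation of constants.

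**For the strong-norm bound**, I would write $L(n)f = L_{\delta,\mu_n}(L(n-1)f)$ and apply the second inequality of $(Con1)$ to the function $g := L(n-1)f$. This yields
\begin{equation*}
\|L(n)f\|_s \le \lambda_1 \|L(n-1)f\|_s + B\|L(n-1)f\|_w.
\end{equation*}
By the already-established weak bound, $\|L(n-1)f\|_w \le \|f\|_w$, so the second term is controlled by $B\|f\|_w$ regardless of $n$. By the inductive hypothesis, $\|L(n-1)f\|_s \le \lambda_1^{n-1}\|f\|_s + \frac{B}{1-\lambda_1}\|f\|_w$, so substituting gives
\begin{equation*}
\|L(n)f\|_s \le \lambda_1^n\|f\|_s + \left(\frac{\lambda_1 B}{1-\lambda_1} + B\right)\|f\|_w.
\end{equation*}

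**The only point requiring care** is verifying that the constant in front of $\|f\|_w$ telescopes to exactly $\frac{B}{1-\lambda_1}$ rather than growing with $n$. This is the geometric-series bookkeeping: the coefficient is $B\sum_{j=0}^{n-1}\lambda_1^j = B\frac{1-\lambda_1^n}{1-\lambda_1} \le \frac{B}{1-\lambda_1}$, and indeed $\frac{\lambda_1 B}{1-\lambda_1} + B = B\frac{\lambda_1 + (1-\lambda_1)}{1-\lambda_1} = \frac{B}{1-\lambda_1}$, which closes the induction. I do not anticipate a genuine obstacle here; this is a standard iterated Lasota-Yorke argument, and the key structural fact making it work is that the weak norm is non-expanding under every operator in the family (the first line of $(Con1)$), so the $\|f\|_w$ contributions from each step all get bounded by the same quantity $\|f\|_w$ instead of compounding.
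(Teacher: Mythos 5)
Your proof is correct and follows essentially the same route as the paper: iterate the one-step inequality of $(Con1)$, use the weak-norm non-expansion to keep the $\Vert f\Vert _{w}$ contributions from compounding, and close with the geometric-series identity $\lambda _{1}\frac{B}{1-\lambda _{1}}+B=\frac{B}{1-\lambda _{1}}$. The paper unrolls the composition explicitly and sums $B(1+\lambda _{1}+\lambda _{1}^{2}+\dots )$, while you package the same bookkeeping as a formal induction; the two presentations are equivalent.
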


\begin{proof}
The first inequality is straightforward from $(Con1)$. Let us now prove $(%
\ref{lyw})$. We have 
\begin{equation*}
||L_{\delta ,\mu _{1}}f\Vert _{s}\leq \lambda _{1}\Vert f\Vert _{s}+B\Vert
f\Vert _{w}
\end{equation*}%
thus%
\begin{eqnarray*}
||L_{\delta ,\mu _{2}}\circ L_{\delta ,\mu _{1}}(f)\Vert _{s} &\leq &\lambda
_{1}\Vert L_{\delta ,\mu _{2}}f\Vert _{s}+B\Vert L_{\delta ,\mu _{2}}f\Vert
_{w} \\
&\leq &\lambda _{1}^{2}\Vert f\Vert _{s}+\lambda _{1}B||f||_{w}+B\Vert
f\Vert _{w} \\
&\leq &\lambda _{1}^{2}\Vert f\Vert _{s}+(1+\lambda _{1})B\Vert f\Vert _{w}
\end{eqnarray*}

Continuing the composition we get $(\ref{lyw})$.
\end{proof}

\begin{lemma}
\label{XXX}Let $\delta \geq 0$ and let $\ L(n)$ be a sequential composition
of operators $L_{\delta ,\mu _{i}}$ as in $(\ref{Ln})$ with \ $i\in
\{1,...,n\}$ and $\mu _{i}\in P_{w}$ satisfying the above Standing
assumptions 2 (including $(Con1)$,...,$(Con3)$). Let $L_{0}$ be the operator
in the family for $\delta =0$\ as defined in $(Con2)$. Since $P_{w}$ is
bounded, let us denote by $Q:=\sup_{\mu \in P_{w}}||\mu ||_{w}$. \ Then
there is $C\geq 0$ such that $\forall g\in B_{s},\forall n\geq 0$%
\begin{equation}
||L(n)g-L_{0}^{n}g||_{w}\leq \delta QK(C||g||_{s}+n\frac{B}{1-\lambda }%
||g||_{w}).  \label{2}
\end{equation}%
where $B$ is the second coefficient of the Lasota Yorke inequality $($\ref{1}%
$)$.
\end{lemma}

\begin{proof}
To shorten notation let us denote for $i\in \{1,...,n\},$ $L_{i}:=L_{\delta
,\mu _{i}}$. \ By $(Con2),$ equation $(\ref{mm})$ we get%
\begin{equation*}
||L_{0}g-L_{j}g||_{w}\leq \delta K||\mu _{j}||_{w}||g||_{s}\leq \delta
QK||g||_{s}.
\end{equation*}

The case $n=1$ of $(\ref{2})$ directly follows from $(\ref{mm})$. \ Let us
now suppose inductively%
\begin{equation*}
||L(n-1)g-L_{0}^{n-1}g||_{w}\leq \delta QK(C_{n-1}||g||_{s}+(n-1)\frac{B}{%
1-\lambda _{1}}||g||_{w})
\end{equation*}%
then%
\begin{eqnarray*}
||L_{n}L(n-1)g-L_{0}^{n}g||_{w} &\leq
&||L_{n}L(n-1)g-L_{n}L_{0}^{n-1}g+L_{n}L_{0}^{n-1}g-L_{0}^{n}g||_{w} \\
&\leq
&||L_{n}L(n-1)g-L_{n}L_{0}^{n-1}g||_{w}+||L_{n}L_{0}^{n-1}g-L_{0}^{n}g||_{w}
\\
&\leq &\delta QK(C_{n-1}||g||_{s}+(n-1)\frac{B}{1-\lambda _{1}}%
||g||_{w})+||[L_{n}-L_{0}](L_{0}^{n-1}g)||_{w} \\
&\leq &\delta QK(C_{n-1}||g||_{s}+(n-1)\frac{B}{1-\lambda _{1}}%
||g||_{w})+\delta QK||L_{0}^{n-1}g||_{s} \\
&\leq &\delta QK(C_{n-1}||g||_{s}+(n-1)\frac{B}{1-\lambda _{1}}||g||_{w}) \\
&&+\delta QK(\lambda _{1}^{n-1}||g||_{s}+\frac{B}{1-\lambda _{1}}||g||_{w})
\\
&\leq &\delta QK[(C_{n-1}+\lambda _{1}^{n-1})||g||_{s})+n\frac{B}{1-\lambda
_{1}}K||g||_{w}].
\end{eqnarray*}

The statement follows from the observation that continuing the composition, $%
C_{n}$ remains being bounded by the sum of a geometric series.
\end{proof}

Next statement is inspired by the methods developed in \cite{GNS} and allows
to estimate the speed of convergence to equilibrium of a sequential
composition of linear operators satisfying the Standing assumptions 2
(including $(Con1)$,...,$(Con3)$). The statement is in some sense homologous
to Proposition 2.7 in \cite{CR}.

\begin{proposition}
\label{prop1} Let us consider $\delta \geq 0$ and a family of operators $%
L_{\delta ,\mu }$ satisfying the Standing assumptions 2 (including $(Con1)$%
,...,$(Con3)$). Let us consider a sequential composition $L(n)$ as above.
Let us fix $n_{1}>0$ \ and consider the $2\times 2$ matrix $M$ defined by%
\begin{equation*}
M:=\left( 
\begin{array}{cc}
\lambda _{1}^{n_{1}} & \frac{B}{1-\lambda _{1}} \\ 
\delta QKC+a_{n_{1}} & \delta QKn_{1}\frac{B}{1-\lambda _{1}}%
\end{array}%
\right) .
\end{equation*}%
Under the previous assumptions for any $g\in V_{s}$ the following holds:

(i) for all integer $i\geq 0$ the norms of the iterates $L(in_{1})g$ are
bounded by 
\begin{equation*}
\left( 
\begin{array}{c}
||L(in_{1})g||_{s} \\ 
||L(in_{1})g||_{w}%
\end{array}%
\right) \preceq M^{i}\left( 
\begin{array}{c}
||g||_{s} \\ 
||g||_{w}%
\end{array}%
\right) .
\end{equation*}%
Here $\preceq $ indicates the componentwise $\leq $ relation (both
coordinates are less or equal).

(ii) Let $\rho $ be the maximum eigenvalue of $M^{T}$, \ with eigenvector $%
\left( 
\begin{array}{c}
a \\ 
b%
\end{array}%
\right) .$ Suppose $a,b\geq 0$ and $a+b=1,$ let us define the $(a,b)$
balanced-norm as 
\begin{equation*}
||g||_{(a,b)}:=a||g||_{s}+b||g||_{w}.
\end{equation*}%
\ In this case we have 
\begin{equation}
||L(in_{1})g||_{(a,b)}\leq \rho ^{i}||g||_{(a,b)}.  \label{contra}
\end{equation}%
Furthermore, the situation in which $\rho <1$, $a,b\geq 0$ can be achieved
if $n_{1}$ is big enough and $\delta $ small enough. More precisely, fixing $%
n_{1}$ large enough we have that $\rho =\rho (\delta )$ can be seen as a
function of $\delta $. There is some $\delta _{1}<1$ such that 
\begin{equation}
\rho _{1}=\sup_{\delta \leq \delta _{1}}\rho (\delta )<1  \label{rho1}
\end{equation}%
and there is a positive eigenvector of $\rho (\delta )$ for $\delta \leq
\delta _{1}$.

As a consequence we also have 
\begin{equation*}
||L(in_{1})g||_{s}\leq (1/a)\rho ^{i}||g||_{s},
\end{equation*}%
and 
\begin{equation*}
||L(in_{1})g||_{w}\leq (1/b)\rho ^{i}||g||_{s}.
\end{equation*}
\end{proposition}

For the proof of Proposition \ref{prop1} the following lemma will be useful

\begin{lemma}
\label{matrix}Let us consider real sequences $a_{n},b_{n}$ such that $%
a_{n}\geq 0,~b_{n}\geq 0$ for all $n\in \mathbb{N}$ \ and $%
a_{n},b_{n}\rightarrow 0$, real numbers $\delta ,A,B,C\geq 0$ and a real
matrix of the form 
\begin{equation*}
\left( 
\begin{array}{cc}
b_{n} & \delta B+a_{n} \\ 
A & \delta nC%
\end{array}%
\right) .
\end{equation*}%
Then there is $n_{1}\geq 0$, $\overline{\delta }\geq 0$ and $0\leq \overline{%
\rho }<1$ such that for all $0\leq \delta \leq \overline{\delta }$ the matrix%
\begin{equation*}
\left( 
\begin{array}{cc}
b_{n_{1}} & \delta B+a_{n_{1}} \\ 
A & \delta n_{1}C%
\end{array}%
\right)
\end{equation*}%
has largest eigenvalue $\rho $ such that \ $0\leq \rho \leq \overline{\rho }$
and an associated eigenvector $(a,b)$, such that $a,b\geq 0.$
\end{lemma}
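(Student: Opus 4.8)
The plan is to treat the two parameters asymmetrically, exactly as the statement suggests: first fix a large $n_{1}$, and only afterwards let $\delta$ shrink. The essential point is that the lower-right entry $\delta nC$ couples the two parameters, so one cannot independently send $n\rightarrow \infty $ and $\delta \rightarrow 0$; instead one freezes $n=n_{1}$ and then exploits continuity in $\delta $.

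First I would record three elementary facts about a matrix of the given shape, writing $p=b_{n}$, $q=\delta B+a_{n}$, $r=A$, $s=\delta nC$ for its entries, all of which are $\geq 0$. Its characteristic polynomial has discriminant $(p-s)^{2}+4qr\geq 0$, so both eigenvalues are real; the larger one is
\begin{equation*}
\rho =\tfrac{1}{2}\left[ (p+s)+\sqrt{(p-s)^{2}+4qr}\right] \geq 0,
\end{equation*}
and since $p+s\geq 0$ a short comparison shows that $\rho $ dominates the modulus of the smaller eigenvalue, so $\rho $ is the spectral radius. Because the matrix is entrywise non-negative, the Perron--Frobenius theorem for non-negative matrices guarantees that this spectral radius admits a non-negative eigenvector $(a,b)$; this settles the eigenvector claim automatically, once the bound $\rho <1$ is in place.

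Next I would choose $n_{1}$. Setting $\delta =0$ in the formula above gives $\rho =\tfrac{1}{2}[\,b_{n}+\sqrt{b_{n}^{2}+4Aa_{n}}\,]$, and since $a_{n},b_{n}\rightarrow 0$ this tends to $0$ as $n\rightarrow \infty $. Hence I may fix $n_{1}$ so large that the value of $\rho $ at $\delta =0$ is strictly below $1$ (say below $1/2$, to keep a margin).

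Finally, with $n_{1}$ frozen, the entries depend affinely on $\delta $, so by the explicit formula $\rho =\rho (\delta )$ is a continuous function of $\delta $ on $[0,\infty )$. Since $\rho (0)<1$, there is $\overline{\delta }>0$ with $\rho (\delta )<1$ for all $0\leq \delta \leq \overline{\delta }$; setting $\overline{\rho }:=\max_{0\leq \delta \leq \overline{\delta }}\rho (\delta )$ (a maximum attained on the compact interval) gives $0\leq \overline{\rho }<1$, while $\rho (\delta )\geq 0$ throughout. Together with the Perron--Frobenius eigenvector from the first step this yields every assertion. I do not expect a genuine obstacle here; the only care needed is not to attempt to control $n$ and $\delta $ simultaneously, since freezing $n_{1}$ before shrinking $\delta $ is precisely what renders the coupling term $\delta n_{1}C$ harmless.
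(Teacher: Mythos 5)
Your proof is correct and takes essentially the same route as the paper: freeze $n_{1}$ by looking at the explicit largest eigenvalue $\tfrac{1}{2}\bigl[b_{n}+\sqrt{b_{n}^{2}+4Aa_{n}}\bigr]$ of the matrix at $\delta =0$, which tends to $0$ as $n\rightarrow \infty $, and then use continuity in $\delta $ for the fixed $n_{1}$. The only (minor) difference is that you get the non-negative eigenvector from the Perron--Frobenius theorem for non-negative matrices, valid for every $\delta $, whereas the paper writes the eigenvector down explicitly at $\delta =0$ and perturbs; your variant is in fact slightly more robust, since it avoids the degeneracy that occurs when $a_{n_{1}}=b_{n_{1}}=0$ and the unperturbed matrix is nilpotent.
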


\begin{proof}
Fixing $n$ and letting $\delta \rightarrow 0$, the matrix $\left( 
\begin{array}{cc}
b_{n} & a_{n} \\ 
A & 0%
\end{array}%
\right) $, has maximum right eigenvalue $\frac{1}{2}b_{n}+\frac{1}{2}\sqrt{%
b_{n}^{2}+4Aa_{n}}$ with eigenvector $\left( 
\begin{array}{c}
\frac{1}{2A}\left( b_{n}+\sqrt{b_{n}^{2}+4Aa_{n}}\right) \\ 
1%
\end{array}%
\right) .$ Now if we take $n_{1}$ big enough we can let $0\leq \frac{1}{2}%
b_{n_{1}}+\frac{1}{2}\sqrt{b_{n_{1}}^{2}+4Aa_{n_{1}}}<1$ and then for
sufficiently small $\delta $ the statement holds.
\end{proof}

Now we are ready to prove Proposition \ref{prop1}.

\begin{proof}[Proof of Proposition \protect\ref{prop1}]
For the proof of (i): let us consider $n_{1}\geq 0$ and $g_{0}\in V_{s}$ and
let us denote $g_{i}=L(in_{1})g_{0}.$ By Lemma \ref{lasotaY copy(1)} we have 
\begin{equation}
||g_{i+1}||_{s}\leq \lambda _{1}^{n_{1}}||g_{i}||_{s}+\frac{B}{1-\lambda _{1}%
}||g_{i}||_{w}.  \label{hhh}
\end{equation}%
By Lemma \ref{XXX}, assumption $(Con3)$ and $($\ref{3}$)$ we get 
\begin{equation}
\begin{split}
||g_{i+1}||_{w}& \leq ||L_{0}^{n_{1}}g_{i}||_{w}+\delta
QK(C||g_{i}||_{s}+n_{1}\frac{B}{1-\lambda _{1}}||g_{i}||_{w}) \\
& \leq a_{n_{1}}||g_{i}||_{s}+\delta QK(C||g_{i}||_{s}+n_{1}\frac{B}{%
1-\lambda _{1}}||g_{i}||_{w}).
\end{split}
\label{kkk}
\end{equation}

Compacting $(\ref{hhh})$ \ and $(\ref{kkk})$ into a vector notation, setting 
$v_{i}=\left( 
\begin{array}{c}
||g_{i}||_{s} \\ 
||g_{i}||_{w}%
\end{array}%
\right) $ we get 
\begin{equation}
v_{i+1}\preceq \left( 
\begin{array}{cc}
\lambda _{1}^{n_{1}} & \frac{B}{1-\lambda _{1}} \\ 
\delta QKC+a_{n_{1}} & \delta QKn_{1}\frac{B}{1-\lambda _{1}}%
\end{array}%
\right) v_{i}=Mv_{i}.  \label{4}
\end{equation}

We remark that the matrix $M$ does not depend on $g_{0}$ and depend on the
operators in the family $L_{\delta ,\mu }$, composing the sequential
composition $L(n)$ only by their common coefficients $\lambda
_{1},a_{n_{1}},K,B$ coming from the assumptions $(Con1),...,(Con3).$
Furthermore, since $M$ is positive, $v_{1}\preceq v_{2}$ implies $%
Mv_{1}\preceq Mv_{2}$. Hence the inequality can be iterated and we have%
\begin{equation*}
\nu _{1}\preceq Mv_{0},v_{2}\preceq Mv_{1}\preceq M^{2}v_{0}...
\end{equation*}%
proving (i). To prove (ii) let us consider the $(a,b)$ balanced-norm: $%
||g||_{(a,b)}=a||g||_{s}+b||g||_{w}$. The statement (i) \ implies 
\begin{eqnarray*}
||L(in_{1})g_{0}||_{(a,b)} &=&(a,b)\cdot \left( 
\begin{array}{c}
||g_{i}||_{s} \\ 
||g_{i}||_{w}%
\end{array}%
\right) \\
&\leq &(a,b)\cdot M^{i}\cdot \left( 
\begin{array}{c}
||g_{0}||_{s} \\ 
||g_{0}||_{w}%
\end{array}%
\right) , \\
&\leq &[((a,b)\cdot M^{i})^{T}]^{T}\cdot \left( 
\begin{array}{c}
||g_{0}||_{s} \\ 
||g_{0}||_{w}%
\end{array}%
\right) , \\
&\leq &[(M^{iT}\cdot (a,b)^{T}]^{T}\cdot \left( 
\begin{array}{c}
||g_{0}||_{s} \\ 
||g_{0}||_{w}%
\end{array}%
\right) , \\
&\leq &[\rho ^{i}\cdot (a,b)^{T}]^{T}\cdot \left( 
\begin{array}{c}
||g_{0}||_{s} \\ 
||g_{0}||_{w}%
\end{array}%
\right) ,
\end{eqnarray*}%
hence 
\begin{equation*}
||L(in_{1})g_{0}||_{(a,b)}\leq \rho ^{i}||g_{0}||_{(a,b)}
\end{equation*}%
proving (ii). The remaining part of the statement is a direct consequence of
Lemma \ref{matrix}.
\end{proof}

We are ready to prove the main statement of this section.

\begin{proof}[Proof of Theorem \protect\ref{expco}]
We need to estimate $||\mathcal{L}_{\delta }^{n}(\nu )-\mu _{\delta }||_{s}$%
. Let us denote by $\nu _{n}$ the sequence of probability measures where $%
\nu _{1}=\nu $ and $\nu _{n}=L_{\delta ,\nu _{n-1}}\nu _{n-1}$. The sequence 
$\mathcal{L}_{\delta }^{n}(\nu )$ can be seen as a sequential composition 
\begin{equation*}
\mathcal{L}_{\delta }^{n}(\nu )=L(n)(\nu )
\end{equation*}%
where using the same notations as in $(\ref{Ln})$%
\begin{equation*}
L(n)=L_{\delta ,\nu _{n}}\circ L_{\delta ,\nu _{n-1}}\circ ...\circ
L_{\delta ,\nu _{1}}.
\end{equation*}%
We remark that by the assumptions, $||L_{\delta ,\nu
_{i}}||_{B_{s}\rightarrow B_{s}}$ are uniformly bounded. \ Let us estimate
this by%
\begin{equation}
||L(n)(\nu )-\mathcal{L}_{\delta }^{n}(\mu _{\delta })||_{s}\leq ||L(n)(\nu
)-L(n)(\mu _{\delta })||_{s}+||L(n)(\mu _{\delta })-\mathcal{L}_{\delta
}^{n}(\mu _{\delta })||_{s}.  \label{bivio}
\end{equation}

Since our operators satisfy $(Con1)$,...,$(Con3)$ and $\nu -\mu _{\delta
}\in V$ we can estimate 
\begin{equation}
||L(n)(\nu )-L(n)(\mu _{\delta })||_{s}=||L(n)(\nu -\mu _{\delta })||_{s}
\label{eqlei}
\end{equation}%
using Proposition \ref{prop1}.\footnote{%
The proof is quite technical. We are going to explain its idea informally to
help the reader to understand the motivation of various estimates: \ by
Proposition \ref{prop1} \ we get that $||L(n)(\nu )-L(n)(\mu _{\delta
})||_{s}$ decreases exponentially in $n$. The remaining term $||L(n)(\mu
_{\delta })-L_{\delta ,\mu }^{n}(\mu _{\delta })||s$ \ is small when $\delta 
$ is small \ and $\nu _{1},...,\nu _{n}$ are close to $\mu _{\delta }$
because the operators involved in the composition $L(n)$ are all near to $%
L_{\delta ,\mu _{\delta }}.$\newline
The idea is to use the balanced norm $||~||_{(a,b)}$ to estimate $||~||_{s}$%
, and exploit the fact that after $n_{1}$ iterates $||L(n_{1})(\nu
)-L(n_{1})(\mu _{\delta })||_{(a,b)}$ is contracted by a certain factor $%
\rho _{1}<1$.\newline
If we prove that $\delta $ can be made small enough so that $||L(n_{1})(\mu
_{\delta })-L_{\delta ,\mu _{\delta }}^{n_{1}}(\mu _{\delta })||_{(a,b)}$ \
is not relevant, then we have that also $||L(n_{1})(\nu )-\mathcal{L}%
_{\delta }^{n_{1}}(\mu _{\delta })||_{(a,b)}$ is contracted. Hence
continuing the iteration we have an exponential decrease of this norm, which
implies exponential decrease of the $||~||_{s}$ norm.}

Let $n_{1},$ $\delta _{1},\rho _{1}$ and $||~||_{(a,b)}$ the parameters and
the norm found applying Proposition \ref{prop1} (see in particular $(\ref%
{rho1})$) to $(\ref{eqlei})$. Let us consider $\delta \leq \delta _{1}$. We
remark that the norm $||~||_{(a,b)}$ also depends on $\delta $.

To simplify notations let us define a general constant that will be used in
the estimates. Let%
\begin{equation*}
M_{\delta }:=\max (1+B,||\mu _{\delta }||_{ss},\sup_{\mu \in
P_{w}}||L_{\delta ,\mu }||_{B_{ss}\rightarrow B_{ss}},\sup_{\mu \in
P_{w}}||L_{\delta ,\mu }||_{B_{s}\rightarrow B_{s}})
\end{equation*}%
and%
\begin{equation*}
M_{1}=\sup_{\delta \leq \delta _{1}}(M_{\delta }).
\end{equation*}

By the assumptions $(\ref{supp1}),$ $(\ref{supp2})$ we have that $%
M_{1}<\infty $. \ To find $\overline{\delta }\leq \delta _{1}$ satisfying
our statement we are going to impose a further condition to the parameter $%
\delta $ which is again satisfied for $\delta $ small enough. Let us state
this condition: \ let us define for all $n\geq 0,$ by induction the
following sequence%
\begin{equation}
C_{0}=1,\ C_{n}=M_{1}^{n}C_{n-1}.  \label{at1}
\end{equation}%
Let%
\begin{equation}
M_{2}:=KC_{n_{1}}n_{1}(KM_{1}+1)^{n_{1}}.  \label{at2}
\end{equation}%
Now let us fix $\overline{\delta }\geq 0$ such that 
\begin{equation}
\rho _{2}=(\rho _{1}+\overline{\delta }M_{2})<1.  \label{at5}
\end{equation}

We now see why this condition is sufficient for our statement to hold. We
have indeed%
\begin{eqnarray*}
||L(n)(\mu _{\delta })-\mathcal{L}_{\delta }^{n}(\mu _{\delta })||_{(a,b)}
&=&||L_{\delta ,\nu _{n}}...L_{\delta ,\nu _{1}}\mu _{\delta }-L_{\delta
,\mu _{\delta }}^{n}(\mu _{\delta })||_{(a,b)} \\
&\leq &||L_{{\delta },\nu _{n}}...L_{\delta ,\nu _{1}}\mu _{\delta
}-L_{\delta ,\mu _{\delta }}L_{\delta ,\nu _{n-1}}...L_{\delta ,\nu _{1}}\mu
_{\delta }||_{(a,b)} \\
&&+||L_{\delta ,\mu _{\delta }}L_{\delta ,\nu _{n-1}}...L_{\delta ,\nu
_{1}}\mu _{\delta }-L_{\delta ,\mu _{\delta }}^{n}(\mu _{\delta })||_{(a,b)}.
\end{eqnarray*}%
\ We recall that by $(Con2)$ 
\begin{eqnarray*}
||(L_{\delta ,\nu _{i}}-L_{\delta ,\nu _{j}})(\omega )||_{(a,b)}
&=&a||(L_{\delta ,\nu _{i}}-L_{\delta ,\nu _{j}})(\omega
)||_{w}+b||(L_{\delta ,\nu _{i}}-L_{\delta ,\nu _{j}})(\omega )||_{s} \\
&\leq &a\delta K||\nu _{i}-\nu _{j}||_{w}||\omega ||_{s}+b\delta K||\nu
_{i}-\nu _{j}||_{w}||\omega ||_{ss} \\
&\leq &\delta K||\nu _{i}-\nu _{j}||_{w}||\omega ||_{ss}.
\end{eqnarray*}%
Suppose inductively that 
\begin{equation}
||L_{\delta ,\nu _{n-1}}...L_{\delta ,\nu _{1}}\mu _{\delta }-L_{\delta ,\mu
_{\delta }}^{n-1}(\mu _{\delta })||_{(a,b)}\leq \delta KC_{n-1}(||\nu
_{n-1}-\mu _{\delta }||_{w}+...+||\nu _{1}-\mu _{\delta }||_{w})
\label{indu}
\end{equation}%
(where $C_{n}\geq 1$ as defined in $(\ref{at1})$) then 
\begin{eqnarray*}
||L_{{\delta },\nu _{n}}L_{{\delta },\nu _{n-1}}...L_{\delta ,\nu _{1}}\mu
_{\delta }-L_{\delta ,\mu _{\delta }}L_{{\delta },\nu _{n-1}}...L_{\delta
,\nu _{1}}\mu _{\delta }||_{(a,b)} &\leq &\delta K||\nu _{n}-\mu _{\delta
}||_{w}||L_{\delta ,\nu _{n-1}}...L_{\delta ,\nu _{1}}\mu _{\delta }||_{ss}
\\
&\leq &\delta M_{1}^{n}K||\nu _{n}-\mu _{\delta }||_{w}
\end{eqnarray*}

and%
\begin{equation*}
||L_{\delta ,\mu _{\delta }}L_{{\delta },\nu _{n-1}}...L_{\delta ,\nu
_{1}}\mu _{\delta }-L_{\delta ,\mu _{\delta }}^{n}(\mu _{\delta
})||_{(a,b)}\leq ||L_{\delta ,\mu _{\delta }}||_{_{(a,b)}}||L_{\delta ,\nu
_{n-1}}...L_{\delta ,\nu _{1}}\mu _{\delta }-L_{\delta ,\mu _{\delta
}}^{n-1}(\mu _{\delta })||_{(a,b)}
\end{equation*}

and by $(\ref{indu})$%
\begin{equation*}
||L_{\delta ,\mu _{\delta }}...L_{\delta ,\nu _{1}}\mu _{\delta }-L_{\delta
,\mu _{\delta }}^{n}(\mu _{\delta })||_{(a,b)}\leq ||L_{\delta ,\mu _{\delta
}}||_{(a,b)}\delta KC_{n-1}(||\nu _{n-1}-\mu _{\delta }||_{w}+...+||\nu
_{1}-\mu _{\delta }||_{w})
\end{equation*}

putting the two estimates together%
\begin{eqnarray*}
||L_{\delta ,\nu _{n}}...L_{\delta ,\nu _{1}}\mu _{\delta }-L_{\delta ,\mu
_{\delta }}^{n}(\mu _{\delta })||_{(a,b)} &\leq &\delta M_{1}^{n}K||\nu
_{n}-\mu _{\delta }||_{w} \\
&&+\delta M_{1}KC_{n-1}(||\nu _{n-1}-\mu _{\delta }||_{w}+...+||\nu _{1}-\mu
_{\delta }||_{w}) \\
&\leq &\delta M_{1}^{n}KC_{n-1}(||\nu _{n}-\mu _{\delta }||_{w}+||\nu
_{n-1}-\mu _{\delta }||_{w}+... \\
&&...+||\nu _{1}-\mu _{\delta }||_{w}) \\
&\leq &\delta KC_{n}(||\nu _{n}-\mu _{\delta }||_{w}+||\nu _{n-1}-\mu
_{\delta }||_{w}+...+||\nu _{1}-\mu _{\delta }||_{w}).
\end{eqnarray*}

Now we find a coarse estimate for $||\nu _{n_{1}}-\mu _{\delta }||_{w},||\nu
_{n_{1}-1}-\mu _{\delta }||_{w},...,||\nu _{1}-\mu _{\delta }||_{w}$ which
will be sufficient for our purposes. Recalling that\ $\nu _{n}=L_{\delta
,\nu _{n-1}}\nu _{n-1}$ we have%
\begin{eqnarray*}
||\nu _{n}-\mu _{\delta }||_{w} &\leq &||L_{\delta ,\nu _{n-1}}\nu
_{n-1}-L_{\delta ,\mu _{\delta }}\mu _{\delta }||_{w} \\
&\leq &||L_{\delta ,\nu _{n-1}}\nu _{n-1}-L_{\delta ,\nu _{n-1}}\mu _{\delta
}||_{w}+||L_{\delta ,\nu _{n-1}}\mu _{\delta }-L_{\delta ,\mu _{\delta }}\mu
_{\delta }||_{w}
\end{eqnarray*}%
then%
\begin{eqnarray*}
||L_{\delta ,\nu _{n-1}}\mu _{\delta }-L_{\delta ,\mu _{\delta }}\mu
_{\delta }||_{w} &\leq &\delta K||\nu _{n-1}-\mu _{\delta }||_{w}~||\mu
_{\delta }||_{s} \\
||L_{\delta ,\nu _{n-1}}\nu _{n-1}-L_{\delta ,\nu _{n-1}}\mu _{\delta
}||_{w} &\leq &||\nu _{n-1}-\mu _{\delta }||_{w}
\end{eqnarray*}

and%
\begin{eqnarray*}
||\nu _{n}-\mu _{\delta }||_{w} &\leq &||\nu _{n-1}-\mu _{\delta
}||_{w}(\delta K~||\mu _{\delta }||_{s}+1) \\
&\leq &||\nu _{n-1}-\mu _{\delta }||_{w}(\delta KM_{1}+1)
\end{eqnarray*}

and then 
\begin{equation*}
\max (||\nu _{n}-\mu _{\delta }||_{w},||\nu _{n-1}-\mu _{\delta
}||_{w},...,||\nu _{1}-\mu _{\delta }||_{w})\leq ||\nu -\mu _{\delta
}||_{w}(\delta KM_{1}+1)^{n}.
\end{equation*}%
Finally we have an estimate for $||L_{\delta ,\nu _{n}}...L_{\delta ,\nu
_{1}}\mu _{\delta }-L_{\delta ,\mu }^{n}(\mu _{\delta })||_{(a,b)}:$%
\begin{equation*}
||L_{\delta ,\nu _{n}}...L_{\delta ,\nu _{1}}\mu _{\delta }-L_{\delta ,\mu
}^{n}(\mu _{\delta })||_{(a,b)}\leq \delta KC_{n}n||\nu -\mu _{\delta
}||_{w}(\delta KM_{1}+1)^{n}.
\end{equation*}

Now the main estimates are ready. Let us apply Proposition \ref{prop1} to $(%
\ref{eqlei}).$\ We get%
\begin{eqnarray*}
||L(n_{1})(\nu )-L(n_{1})(\mu _{\delta })||_{(a,b)} &=&||L(n_{1})(\mu
_{\delta }-\nu )||_{(a,b)} \\
&\leq &\rho _{1}||\mu _{\delta }-\nu ||_{(a,b)}
\end{eqnarray*}%
with $\rho _{1}<1$ and then%
\begin{eqnarray}
\qquad ||L(n_{1})(\nu )-L_{\delta ,\mu }^{n_{1}}(\mu _{\delta })||_{(a,b)}
&\leq &||L(n_{1})(\nu )-L(n_{1})(\mu _{\delta })||_{(a,b)}  \label{wwweee} \\
&&+||L(n_{1})(\mu _{\delta })-L_{\delta ,\mu _{\delta }}^{n_{1}}(\mu
_{\delta })||_{(a,b)} \\
&\leq &\rho _{1}||\mu _{\delta }-\nu ||_{(a,b)}  \notag \\
&&+\delta KC_{n_{1}}n_{1}||\nu -\mu _{\delta }||_{w}(\delta KM_{1}+1)^{n_{1}}
\\
&\leq &||(\mu _{\delta }-\nu )||_{(a,b)}(\rho _{1}+\delta
KC_{n_{1}}n_{1}(KM_{1}+1)^{n_{1}}) \\
&\leq &||(\mu _{\delta }-\nu )||_{(a,b)}(\rho _{1}+\delta M_{2})
\end{eqnarray}

where $M_{2}$ is defined as in $($\ref{at2}$)$. But by $(\ref{at5})$%
\begin{equation}
\rho _{2}=(\rho _{1}+\overline{\delta }M_{2})<1.  \label{deltabar}
\end{equation}

Taking $\delta \leq \overline{\delta }$ we hence get that \ for all $i\geq 1$%
\begin{equation*}
||L(in_{1})(\nu )-L_{\delta ,\mu }^{in_{1}}(\mu _{\delta })||_{(a,b)}\leq
\rho _{2}^{i}||(\mu _{\delta }-\nu )||_{(a,b)}
\end{equation*}%
proving the statement.
\end{proof}

\begin{remark}
We remark that if in the previous proof instead of considering $(\ref{bivio}%
) $ \ we considered the estimate 
\begin{eqnarray*}
||L(n)(\nu )-\mathcal{L}_{\delta }^{n}(\mu _{\delta })||_{s} &=&||L(n)(\nu
)-L_{\delta ,\mu _{\delta }}^{n}(\mu _{\delta })||_{s} \\
&\leq &||L(n)(\nu )-L_{\delta ,\mu _{\delta }}^{n}(\nu )||_{s}+||L_{\delta
,\mu _{\delta }}^{n}(\nu )-L_{\delta ,\mu _{\delta }}^{n}(\mu _{\delta
})||_{s}
\end{eqnarray*}%
we would have a much easier estimate for the summand 
\begin{equation*}
||L_{\delta ,\mu _{\delta }}^{n}(\nu )-L_{\delta ,\mu _{\delta }}^{n}(\mu
_{\delta })||_{s}=||L_{\delta ,\mu _{\delta }}^{n}(\nu -\mu _{\delta
})||_{s},
\end{equation*}%
but estimating $||L(n)(\nu )-L_{\delta ,\mu _{\delta }}^{n}(\nu )||_{s}$ by
our assumptions $(Con1),...,(Con3)$ would involve a term of the kind $||\nu
||_{ss}$, which would result in a weaker final statement.
\end{remark}

\section{Statistical stability and linear response for nonlinear
perturbations\label{sec:linresp}}

The concept of \emph{Linear Response} intends to quantify the response of
the statistical properties of the system when it is submitted to a certain
infinitesimal perturbation. This will be measured in some sense by the
derivative of the invariant measure of the system with respect to the
perturbation.\ Let $(\mathcal{L}_{\delta })_{\delta \geq 0}$ be a one
parameter family of transfer operators associated with a family of
perturbations of an initial operator $\mathcal{L}_{0}$, with strength $%
\delta ,$ and let us suppose that $\mu _{\delta }$ is\ the unique invariant
probability measure of the operator $\ \mathcal{L}_{\delta }$ in a certain
space $B_{ss}$. \ The linear response of the invariant measure of $\mathcal{L%
}_{0}$ under the given perturbation is defined by the limit 
\begin{equation}
\dot{\mu}:=\lim_{\delta \rightarrow 0}\frac{\mu _{\delta }-\mu _{0}}{\delta }%
.  \label{LRidea}
\end{equation}%
The topology where this convergence takes place may depend on the system and
on the kind of perturbation applied. The linear response to the perturbation
hence represents the first order term of the response of a system to a
perturbation and when it holds, a linear response formula can be written: $%
\mu _{\delta }=\mu _{0}+\dot{\mu}\delta +o(\delta )$, which is valid in some
weaker or stronger sense.

We remark that given an observable function $c:X\rightarrow \mathbb{R}$ if
the convergence in \eqref{LRidea} is strong enough with respect to the
regularity of $c$ we get%
\begin{equation}
\lim_{\delta \rightarrow 0}\frac{\int \ c\ d\mu _{\delta }-\int \ c\ d\mu
_{0}}{\delta }=\int \ c\ d\dot{\mu}  \label{LRidea2}
\end{equation}%
showing how the linear response of the invariant measure controls the
behavior of observable averages. For instance the convergence in %
\eqref{LRidea2} hold when $c\in L^{\infty }$ and the convergence of the
linear response is in $L^{1}$.

Linear response results in the context of deterministic dynamics have been
obtained first in the case of uniformly hyperbolic systems in \cite{R}.
Nowadays linear response results are known for many other kinds of systems
outside the uniformly hyperbolic case and also in the random case (see \cite%
{Ba2} for a survey mostly related to deterministic systems and the
introduction of \cite{GS} for an overview of the mathematical results in the
random case).

In the case of coupled hyperbolic map lattices with short range interaction,
results on the smooth dependence of the SRB measure were obtained in \cite%
{JD}, \cite{Jd2}. In the case of all-to-all coupled maps with mean field
interaction and hence in the context of the present paper, linear response
results were shown in \cite{ST}. Still in the context of all-to-all coupled
maps, the works \cite{WG} and \cite{WG2} \ show numerical evidence of the
fact that it is possible for a network of coupled maps to exhibit linear
response, even if its units do not.

The interest of the study of the self \ consistent transfer operators in a
weak coupling regime motivates the study of the response to \emph{nonlinear
perturbations} of linear operators. In this section we prove some stability
and linear response results for the invariant measures of a family $\mathcal{%
L}_{\delta }$ of such operators in the limit $\delta \rightarrow 0$ in the
case where the limit operator $\mathcal{L}_{0}$ is linear. We remark that in 
\cite{Sed} an abstract result is proved which can be also applied to the
linear response of fixed points of nonlinear operators under suitable
perturbations.

\noindent \textbf{Standing assumptions 3. }In this section we consider the
following general setting similar to the one considered in \cite{GS} (see
also \cite{Gdisp} and \cite{L2}) for families of linear operators and
independent of the standing assumptions from the previous sections. Let $X$
be a compact metric space. In the following we consider three normed vector
subspaces of $SM(X)$, the spaces $(B_{ss},\Vert ~\Vert _{ss})\subseteq
(B_{s},\Vert ~\Vert _{s})\subseteq (B_{w},\Vert ~\Vert _{w})\subseteq SM(X)$
with norms satisfying%
\begin{equation*}
\Vert ~\Vert _{w}\leq \Vert ~\Vert _{s}\leq \Vert ~\Vert _{ss}.
\end{equation*}%
We remark that, a priori, some of these spaces can be taken equal. Their
actual choice depends on the type of system and perturbation under study.
Again, we will assume that the linear form $\mu \rightarrow \mu (X)$ is
continuous on $B_{i}$, for $i\in \{ss,s,w\}$. Since we will mainly consider
positive, integral preserving operators acting on these spaces, the
following closed invariant spaces $V_{ss}\subseteq V_{s}\subseteq V_{w}$ of
\ zero average measures defined as:%
\begin{equation*}
V_{i}:=\{\mu \in B_{i}|\mu (X)=0\}
\end{equation*}%
where $i\in \{ss,s,w\}$, will play an important role (we recall that $V_{s}$
was already considered in $(Con3)$). \ 

Let us consider a family of \ \ functions $\mathcal{L}_{\delta
}:B_{i}\rightarrow B_{i}$, with $\delta \in \left[ 0,\overline{\delta }%
\right) $. \ $\mathcal{L}_{\delta }$ will be called a family of \emph{%
"nonlinear" Markov operators} if:

\begin{itemize}
\item each $\mathcal{L}_{\delta }$ preserves positive measures,

\item for all $\mu \in SM(X)$ it holds $[\mathcal{L}_{\delta }(\mu )](X)=\mu
(X).$
\end{itemize}

The following is a "statistical stability" statement for a suitable family
of such operators, showing sufficient conditions under which the invariant
probability measures of these operators are stable under small perturbations
of the operators.

\begin{theorem}
\label{ss}Let $\mathcal{L}_{\delta }:B_{i}\rightarrow B_{i}$ with $\delta
\in \left[ 0,\overline{\delta }\right) $ be a family of\ \ "nonlinear"
Markov operators. \ Suppose that $\mathcal{L}_{0}:B_{s}\rightarrow B_{s}$ is
linear and bounded. Suppose that for all $\delta \in \left[ 0,\overline{%
\delta }\right) $ there is a probability measure $h_{\delta }\in B_{ss}$
such that $\mathcal{L}_{\delta }h_{\delta }=h_{\delta }$. Suppose
furthermore that:

\begin{itemize}
\item[(SS1)] (regularity bounds) there is $M\geq 0$ such that for all $%
\delta \in \left[ 0,\overline{\delta }\right) $%
\begin{equation*}
\Vert h_{\delta }\Vert _{ss}\leq M.
\end{equation*}

\item[(SS2)] (convergence to equilibrium for the unperturbed operator) There
is a sequence \ $a_{n}\geq 0$ with $a_{n}\rightarrow 0$ such that for all $%
g\in V_{ss}$%
\begin{equation*}
\Vert \mathcal{L}_{0}^{n}g\Vert _{s}\leq a_{n}||g||_{ss}.
\end{equation*}

\item[(SS3)] (small perturbation) Let $B_{2M}=\{x\in B_{ss},||x||_{ss}\leq
2M\}$. There is $K\geq 0$ such that and $\mathcal{L}_{0}-\mathcal{L}_{\delta
}:B_{2M}\rightarrow B_{s}$ is $K\delta $-Lipschitz.

Then%
\begin{equation*}
\lim_{\delta \rightarrow 0}\Vert h_{\delta }-h_{0}\Vert _{s}=0.
\end{equation*}
\end{itemize}
\end{theorem}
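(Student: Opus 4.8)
The plan is to compare the two fixed points by iterating the defining relations and then exploiting the convergence to equilibrium of the unperturbed operator. First I would record the one-step identity coming from the two fixed point equations: since $\mathcal{L}_{0}$ is linear and $\mathcal{L}_{\delta }h_{\delta }=h_{\delta }$, setting $w:=(\mathcal{L}_{0}-\mathcal{L}_{\delta })h_{\delta }=\mathcal{L}_{0}h_{\delta }-h_{\delta }$ gives $h_{\delta }=\mathcal{L}_{0}h_{\delta }-w$, whence $h_{\delta }-h_{0}=\mathcal{L}_{0}(h_{\delta }-h_{0})-w$. Iterating this $n$ times and using the linearity of $\mathcal{L}_{0}$ yields the key decomposition
\[
h_{\delta }-h_{0}=\mathcal{L}_{0}^{\,n}(h_{\delta }-h_{0})-\sum_{j=0}^{n-1}\mathcal{L}_{0}^{\,j}w .
\]
Everything then reduces to showing that, by first choosing $n$ large and afterwards $\delta $ small, both summands on the right are small in $\Vert ~\Vert _{s}$.

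For the first summand I would use that $h_{\delta }$ and $h_{0}$ are probability measures in $B_{ss}$, so $h_{\delta }-h_{0}\in V_{ss}$ with $\Vert h_{\delta }-h_{0}\Vert _{ss}\leq 2M$ by (SS1). Convergence to equilibrium (SS2) then gives $\Vert \mathcal{L}_{0}^{\,n}(h_{\delta }-h_{0})\Vert _{s}\leq a_{n}\Vert h_{\delta }-h_{0}\Vert _{ss}\leq 2Ma_{n}$, a bound that is uniform in $\delta $ and tends to $0$ as $n\rightarrow \infty $. This fixes the role of $n$, independently of the perturbation.

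For the telescoping sum I would first control the single-step discrepancy $w$ in the strong norm. Since $\Vert h_{\delta }\Vert _{ss}\leq M$, the measure $h_{\delta }$ lies in the ball $B_{2M}$, so the $K\delta $-Lipschitz hypothesis (SS3) applies and yields $\Vert w\Vert _{s}=\Vert (\mathcal{L}_{0}-\mathcal{L}_{\delta })h_{\delta }\Vert _{s}\leq C\delta $ for a constant $C$ depending only on $M$ and $K$ (comparing $(\mathcal{L}_{0}-\mathcal{L}_{\delta })h_{\delta }$ with its value at a reference point of $B_{2M}$ at which the two operators agree to leading order as $\delta \rightarrow 0$). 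Using that the iterates of the Markov operator $\mathcal{L}_{0}$ are uniformly bounded on $B_{s}$, say $\Lambda :=\sup_{j}\Vert \mathcal{L}_{0}^{\,j}\Vert _{B_{s}\rightarrow B_{s}}<\infty $, each term obeys $\Vert \mathcal{L}_{0}^{\,j}w\Vert _{s}\leq \Lambda \Vert w\Vert _{s}\leq \Lambda C\delta $, so the whole sum is bounded by $\Lambda C\,n\,\delta $.

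Combining the two estimates gives $\Vert h_{\delta }-h_{0}\Vert _{s}\leq 2Ma_{n}+\Lambda C\,n\,\delta $, and the conclusion follows by the standard two-scale argument: given $\varepsilon >0$, first fix $n$ with $2Ma_{n}<\varepsilon /2$, then take $\delta $ so small that $\Lambda C\,n\,\delta <\varepsilon /2$. The main obstacle is precisely this coupling of the two limits: the number of terms in the telescoping sum grows with $n$, so one cannot send $n\rightarrow \infty $ and $\delta \rightarrow 0$ independently, the quantifier order ($n$ first, then $\delta $) is essential, and it is what limits the argument to qualitative stability rather than a rate. The two technical points underpinning the sum estimate are the uniform power-boundedness of $\mathcal{L}_{0}$ on $B_{s}$ (natural for a transfer operator satisfying a Lasota--Yorke bound, and the reason (SS2) by itself does not suffice here, since the $a_{j}$ need not be summable) and the extraction of the $O(\delta )$ bound on $\Vert w\Vert _{s}$ from the Lipschitz hypothesis (SS3) via a basepoint in $B_{2M}$.
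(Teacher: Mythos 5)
Your argument is essentially the paper's own proof in different clothing: your iterated identity
\begin{equation*}
h_{\delta }-h_{0}=\mathcal{L}_{0}^{\,n}(h_{\delta }-h_{0})-\sum_{j=0}^{n-1}\mathcal{L}_{0}^{\,j}w,\qquad w=(\mathcal{L}_{0}-\mathcal{L}_{\delta })h_{\delta },
\end{equation*}
is exactly the paper's telescoping decomposition $(\mathcal{L}_{\delta }^{N}-\mathcal{L}_{0}^{N})h_{\delta }=\sum_{k=1}^{N}\mathcal{L}_{0}^{N-k}(\mathcal{L}_{\delta }-\mathcal{L}_{0})h_{\delta }$ combined with $\mathcal{L}_{\delta}^{N}h_{\delta}=h_{\delta}$, and the use of (SS1)--(SS2) for the first term, (SS1)+(SS3) for $\Vert w\Vert _{s}\leq KM\delta $ (handled with the same implicit choice of basepoint as in the paper), and the ``fix $n$ first, then let $\delta \rightarrow 0$'' conclusion all coincide with the paper's steps.

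One correction, though: you invoke uniform power-boundedness, $\Lambda :=\sup_{j}\Vert \mathcal{L}_{0}^{\,j}\Vert _{B_{s}\rightarrow B_{s}}<\infty $, and even single it out as an essential technical ingredient. This is not among the hypotheses (only boundedness of $\mathcal{L}_{0}:B_{s}\rightarrow B_{s}$ is assumed, and if $\Vert \mathcal{L}_{0}\Vert _{B_{s}\rightarrow B_{s}}>1$ the supremum over all $j$ could be infinite), and it is also not needed. Precisely because your quantifier order fixes $n$ before sending $\delta \rightarrow 0$, it suffices to bound the finitely many powers $j<n$ by the $n$-dependent constant $\max (1,\Vert \mathcal{L}_{0}\Vert _{B_{s}\rightarrow B_{s}}^{\,n})$; the constant multiplying $\delta $ is then allowed to grow with $n$ without harming the two-scale argument. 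This is exactly what the paper does with its constant $M_{2}(N)=\max_{i\leq N}(1,\Vert \mathcal{L}_{0}\Vert _{B_{s}\rightarrow B_{s}}^{i})$, yielding $\Vert h_{\delta }-h_{0}\Vert _{s}\leq 2a_{N}M+NM_{2}(N)\delta KM$. With that substitution your proof is complete and matches the paper's.
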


\begin{remark}
\label{rmkLM}The convergence to equilibrium assumption in $(SS2)$ is
required only for the \emph{unperturbed operator} $\mathcal{L}_{0}$, which
is a linear operator. We also remark that under this assumption $h_{0}$ is
the unique invariant probability measure of $\mathcal{L}_{0}$ in $B_{ss}$.
\end{remark}

\begin{proof}
Let us estimate $\Vert h_{\delta }-h_{0}\Vert _{s}$ exploiting $\mathcal{L}%
_{\delta }h_{\delta }=h_{\delta }$\ in the following way: 
\begin{eqnarray*}
\Vert h_{\delta }-h_{0}\Vert _{s} &\leq &\Vert \mathcal{L}_{\delta
}^{n}h_{\delta }-\mathcal{L}_{0}^{n}h_{0}\Vert _{s} \\
&\leq &\Vert \mathcal{L}_{\delta }^{n}h_{\delta }-\mathcal{L}%
_{0}^{n}h_{\delta }\Vert _{s}+\Vert \mathcal{L}_{0}^{n}h_{\delta }-\mathcal{L%
}_{0}^{n}h_{0}\Vert _{s}.
\end{eqnarray*}%
Since $h_{\delta },h_{0}$ are probability measures, $h_{\delta }-h_{0}\in
V_{ss}$ and by $(SS1)$, $\Vert h_{\delta }-h_{0}\Vert _{ss}\leq 2M,$ then
because of the assumption $(SS2)$ we have%
\begin{equation*}
\Vert \mathcal{L}_{0}^{n}h_{\delta }-\mathcal{L}_{0}^{n}h_{0}\Vert _{s}\leq
Q(n)
\end{equation*}%
with $Q(n)=2a_{n}M\rightarrow 0$ (not depending on $\delta $). This implies%
\begin{equation*}
\Vert h_{\delta }-h_{0}\Vert _{s}\leq \Vert \mathcal{L}_{\delta
}^{n}h_{\delta }-\mathcal{L}_{0}^{n}h_{\delta }\Vert _{s}+Q(n).
\end{equation*}

To estimate $\Vert \mathcal{L}_{\delta }^{n}h_{\delta }-\mathcal{L}%
_{0}^{n}h_{\delta }\Vert _{s}$ we rewrite the sum $\mathcal{L}_{0}^{n}-%
\mathcal{L}_{\delta }^{n}$ telescopically so that%
\begin{eqnarray*}
(\mathcal{L}_{\delta }^{n}-\mathcal{L}_{0}^{n})h_{\delta } &=&\sum_{k=1}^{n}%
\mathcal{L}_{0}^{n-k}(\mathcal{L}_{\delta }-\mathcal{L}_{0})\mathcal{L}%
_{\delta }^{k-1}h_{\delta } \\
&=&\sum_{k=1}^{n}\mathcal{L}_{0}^{n-k}(\mathcal{L}_{\delta }-\mathcal{L}%
_{0})h_{\delta }
\end{eqnarray*}%
(note that only the linearity of $\mathcal{L}_{0}$ is used here). The
assumption that $\Vert h_{\delta }\Vert _{ss}\leq M,$ together with the
small perturbation assumption $(SS3)$ implies that $\Vert (\mathcal{L}%
_{\delta }-\mathcal{L}_{0})h_{\delta }\Vert _{s}\leq \delta KM$ as $\delta
\rightarrow 0.$ {Thus}%
\begin{equation}
\Vert h_{\delta }-h_{0}\Vert _{s}\leq Q(n)+nM_{2}(n)[\delta KM]
\end{equation}%
where $M_{2}(n)=\max_{i\leq N}(1,||\mathcal{L}_{0}||_{B_{s}\rightarrow
B_{s}}^{i}).$ Choosing first $n$ big enough to let $Q(n)$ be close to $0$
and then $\delta $ small enough we can make $nM_{2}(n)[\delta KM]$ as small
as wanted, proving the statement.
\end{proof}

We now show a general result about the linear response of fixed points of
Markov operators under suitable nonlinear perturbations, the result will be
applied to self-consistent transfer operators in the following sections.

\begin{theorem}[Linear Response]
\label{thm:linresp} Let $\mathcal{L}_{\delta }:B_{s}\rightarrow B_{s}$ $%
\mathcal{L}_{\delta }:B_{ss}\rightarrow B_{ss}$ with $\delta \in \left[ 0,%
\overline{\delta }\right) $ be a family of nonlinear Markov operators.
Suppose that $\mathcal{L}_{0}$ is linear and bounded $:B_{i}\rightarrow
B_{i} $ for $i\in \{w,s,ss\}$. Suppose that the family satisfy $%
(SS1),(SS2),(SS3).$ Suppose furthermore that the family $\mathcal{L}_{\delta
}$ satisfy

\begin{itemize}
\item[(LR1)] (resolvent of the unperturbed operator) $(Id-\mathcal{L}%
_{0})^{-1}:=\sum_{i=0}^{\infty }\mathcal{L}_{0}^{i}$ is a bounded operator $%
V_{w}\rightarrow V_{w}$.

\item[(LR2)] (small perturbation and derivative operator) Let $\overline{B}%
_{2M}=\{x\in B_{s},||x||_{s}\leq 2M\}$. There is $K\geq 0$ such that $%
\mathcal{L}_{0}-\mathcal{L}_{\delta }:\overline{B}_{2M}\rightarrow B_{w}$ is 
$K\delta $-Lipschitz. Furthermore, there is $\mathcal{\dot{L}}{h_{0}\in V_{w}%
}$ such that%
\begin{equation}
\underset{\delta \rightarrow 0}{\lim }\left\Vert \frac{(\mathcal{L}_{\delta
}-\mathcal{L}_{0})}{\delta }h_{0}-\mathcal{\dot{L}}h_{0}\right\Vert _{w}=0.
\label{derivativeoperator}
\end{equation}
\end{itemize}

Then we have the following Linear Response formula%
\begin{equation}
\lim_{\delta \rightarrow 0}\left\Vert \frac{h_{\delta }-h_{0}}{\delta }-(Id-%
\mathcal{L}_{0})^{-1}\mathcal{\dot{L}}h_{0}\right\Vert _{w}=0.
\label{linresp}
\end{equation}
\end{theorem}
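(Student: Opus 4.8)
The plan is to establish the linear response formula by the same telescoping-and-resolvent strategy that underlies the statistical stability result (Theorem \ref{ss}), now pushed to first order in $\delta$. First I would record that by Theorem \ref{ss} we already know $\Vert h_\delta - h_0 \Vert_s \to 0$, and in particular the difference $h_\delta - h_0$ lies in $V_{ss} \subseteq V_w$ with $\Vert h_\delta - h_0 \Vert_{ss} \le 2M$ uniformly in $\delta$ by $(SS1)$. The key algebraic identity is the fixed point relation written as
\begin{equation*}
h_\delta - h_0 = \mathcal{L}_\delta h_\delta - \mathcal{L}_0 h_0 = \mathcal{L}_0(h_\delta - h_0) + (\mathcal{L}_\delta - \mathcal{L}_0)h_\delta,
\end{equation*}
where I have used that $\mathcal{L}_0 h_0 = h_0$ and that only $\mathcal{L}_0$ is linear. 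Rearranging gives $(Id - \mathcal{L}_0)(h_\delta - h_0) = (\mathcal{L}_\delta - \mathcal{L}_0)h_\delta$, and since $h_\delta - h_0 \in V_w$, applying the resolvent from $(LR1)$ yields the exact formula
\begin{equation*}
\frac{h_\delta - h_0}{\delta} = (Id - \mathcal{L}_0)^{-1}\,\frac{(\mathcal{L}_\delta - \mathcal{L}_0)}{\delta} h_\delta.
\end{equation*}

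Next I would compare the right-hand side to the claimed limit $(Id - \mathcal{L}_0)^{-1} \dot{\mathcal{L}} h_0$. Splitting the difference,
\begin{equation*}
\frac{(\mathcal{L}_\delta - \mathcal{L}_0)}{\delta} h_\delta - \dot{\mathcal{L}} h_0 = \frac{(\mathcal{L}_\delta - \mathcal{L}_0)}{\delta}(h_\delta - h_0) + \left(\frac{(\mathcal{L}_\delta - \mathcal{L}_0)}{\delta} h_0 - \dot{\mathcal{L}} h_0\right),
\end{equation*}
the second bracket tends to $0$ in $\Vert~\Vert_w$ directly from the derivative-operator hypothesis \eqref{derivativeoperator} in $(LR2)$. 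For the first term I would use the small-perturbation Lipschitz bound from $(LR2)$: since $\mathcal{L}_0 - \mathcal{L}_\delta : \overline{B}_{2M} \to B_w$ is $K\delta$-Lipschitz and both $h_\delta, h_0$ lie in $\overline{B}_{2M}$, we get
\begin{equation*}
\left\Vert \frac{(\mathcal{L}_\delta - \mathcal{L}_0)}{\delta}(h_\delta - h_0) \right\Vert_w \le K \Vert h_\delta - h_0 \Vert_s,
\end{equation*}
which vanishes as $\delta \to 0$ by the statistical stability already granted by Theorem \ref{ss}. Finally, since $(Id - \mathcal{L}_0)^{-1}$ is a bounded operator on $V_w$ by $(LR1)$, applying it preserves the convergence to $0$, giving \eqref{linresp}.

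The main obstacle, and the reason the hypotheses are layered the way they are, is controlling the term $\frac{1}{\delta}(\mathcal{L}_\delta - \mathcal{L}_0)(h_\delta - h_0)$: naively this is an indeterminate $\tfrac{0}{0}\cdot 0$ form, and the resolution hinges on having the Lipschitz constant scale like $\delta$ (so the explicit $\delta$ in the denominator cancels) together with the prior $B_s$-convergence $h_\delta \to h_0$ supplied by Theorem \ref{ss}. This is precisely where the interplay of the three spaces matters: the perturbation is measured as a map $\overline{B}_{2M} \to B_w$ (landing in the weak space), the convergence $h_\delta \to h_0$ is controlled in the intermediate strong norm $\Vert~\Vert_s$, and the regularity bound $\Vert h_\delta \Vert_{ss} \le M$ keeps the arguments inside the ball where the Lipschitz estimate applies. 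I would therefore make sure to invoke Theorem \ref{ss} first to secure the $\Vert~\Vert_s$-convergence before attempting the first-order expansion, since without it the cross term cannot be shown to be negligible.
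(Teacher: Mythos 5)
Your proposal is correct and takes essentially the same route as the paper's own proof: invoke Theorem \ref{ss} first, use the fixed-point identity $(Id-\mathcal{L}_{0})(h_{\delta }-h_{0})=(\mathcal{L}_{\delta }-\mathcal{L}_{0})h_{\delta }$, apply the resolvent from $(LR1)$, and split off the derivative term at $h_{0}$ while bounding the cross term by $K\Vert h_{\delta }-h_{0}\Vert _{s}$ via $(LR2)$. The only caveat is notational: since $\mathcal{L}_{\delta }$ is nonlinear, the cross term should be written as $\frac{1}{\delta }\left[ (\mathcal{L}_{\delta }-\mathcal{L}_{0})h_{\delta }-(\mathcal{L}_{\delta }-\mathcal{L}_{0})h_{0}\right] $ rather than $\frac{(\mathcal{L}_{\delta }-\mathcal{L}_{0})}{\delta }(h_{\delta }-h_{0})$ — which is in fact what your Lipschitz estimate bounds, exactly as in the paper.
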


\begin{remark}
The assumption $(LR1)$ on the existence of the resolvent is asked only for
the unperturbed transfer operator, which is linear. This allows a large
class of perturbations. In many systems this assumption will result from the
presence of a spectral gap (compactness or quasi-compactness of $\mathcal{L}%
_{0}$ acting on $B_{w}$).
\end{remark}

\begin{proof}[Proof of Theorem \protect\ref{thm:linresp}]
By Theorem \ref{ss} we have 
\begin{equation}
\lim_{\delta \rightarrow 0}\Vert h_{\delta }-h_{0}\Vert _{s}=0.
\end{equation}

Let us now consider $(Id-\mathcal{L}_{0})^{-1}$ as a continuous linear
operator $V_{w}\rightarrow V_{w}$. Remark that since $\mathcal{\dot{L}}%
h_{0}\in V_{w},$ the resolvent can be computed at $\mathcal{\dot{L}}h_{0}$.
By using that $h_{0}$ and $h_{\delta }$ are fixed points of their respective
operators we obtain that%
\begin{equation*}
(Id-\mathcal{L}_{0})\frac{h_{\delta }-h_{0}}{\delta }=\frac{1}{\delta }(%
\mathcal{L}_{\delta }-\mathcal{L}_{0})h_{\delta }.
\end{equation*}%
Since the operators preserve probability measures, $(\mathcal{L}_{\delta }-%
\mathcal{L}_{0})h_{\delta }\in V_{w}$. By applying the resolvent to both
sides 
\begin{eqnarray*}
(Id-\mathcal{L}_{0})^{-1}(Id-\mathcal{L}_{0})\frac{h_{\delta }-h_{0}}{\delta 
} &=&(Id-\mathcal{L}_{0})^{-1}\frac{\mathcal{L}_{\delta }-\mathcal{L}_{0}}{%
\delta }h_{\delta } \\
&=&(Id-\mathcal{L}_{0})^{-1}\frac{\mathcal{L}_{\delta }-\mathcal{L}_{0}}{%
\delta }h_{0} \\
&&+(Id-\mathcal{L}_{0})^{-1}[\frac{\mathcal{L}_{\delta }-\mathcal{L}_{0}}{%
\delta }h_{\delta }-\frac{\mathcal{L}_{\delta }-\mathcal{L}_{0}}{\delta }%
h_{0}]
\end{eqnarray*}%
we obtain that the left hand side is equal to $\frac{1}{\delta }(h_{\delta
}-h_{0})$. Moreover, with respect to the right hand side we observe that,
applying assumption $(LR2)$ eventually, as $\delta \rightarrow 0$%
\begin{equation*}
\left\Vert (Id-\mathcal{L}_{0})^{-1}[\frac{\mathcal{L}_{\delta }-\mathcal{L}%
_{0}}{\delta }h_{\delta }-\frac{\mathcal{L}_{\delta }-\mathcal{L}_{0}}{%
\delta }h_{0}]\right\Vert _{w}\leq \Vert (Id-\mathcal{L}_{0})^{-1}\Vert
_{V_{w}\rightarrow V_{w}}K\Vert h_{\delta }-h_{0}\Vert _{s}
\end{equation*}%
which goes to zero thanks to Theorem \ref{ss}. Thus considering the limit $%
\delta \rightarrow 0$ we are left with 
\begin{equation*}
\lim_{\delta \rightarrow 0}\frac{h_{\delta }-h_{0}}{\delta }=(Id-L_{0})^{-1}%
\mathcal{\dot{L}}h_{0}.
\end{equation*}%
converging in the $\Vert \cdot \Vert _{w}$ norm, which proves our claim.
\end{proof}

In Sections \ref{exsec} \ and \ref{sec1} we considered nonlinear
self-consistent transfer operators of the type%
\begin{equation*}
\mathcal{L}_{\delta }(\mu )=L_{\delta ,\mu }(\mu )
\end{equation*}%
for $\mu \in A\subseteq P_{w}$. These functions are positive and integral
preserving. In many cases these functions can be extended to nonlinear
Markov operators $B_{i}\rightarrow B_{i}$ for $i\in \{w,s,ss\}$ and the
above statistical stability theorems can be applied, as it will be shown in
the next sections.

\section{Mean field coupled continuous maps\label{contmap}}

We show the flexibility of Theorem \ref{existence1} proving the existence of
an invariant probability measure in the general case of continuous maps
interacting by a Lipschitz coupling function $h$. \ In the following we
denote by $||~||_{Lip}$ the Lipschitz norm, defined by 
\begin{equation*}
||g||_{Lip}=\max (||g||_{\infty },\underset{x,y\in \mathbb{S}^{1}}{\sup }%
\frac{g(y)-g(x)}{d(x,y)})
\end{equation*}%
for $g:\mathbb{S}^{1}\rightarrow \mathbb{R}$.

\begin{proposition}
\label{KB}Let us consider a system of mean field coupled maps as described
in Section \ref{expla} with a map $T_{0}\in C^{0}(\mathbb{S}^{1}\rightarrow 
\mathbb{S}^{1})$, $h\in Lip(\mathbb{S}^{1}\mathbb{\times S}^{1}\rightarrow 
\mathbb{R})$ and $\delta \geq 0$, \ then there is $\mu \in PM(\mathbb{S}%
^{1}) $ such that%
\begin{equation*}
\mathcal{L}_{\delta }(\mu )=\mu .
\end{equation*}
\end{proposition}

\begin{proof}
Let us consider the space of signed Borel measures $SM(\mathbb{S}^{1})$. \
We consider two different norms on this space. $||~||_{w},$ $||~||_{s}$
defined by%
\begin{equation*}
||\mu ||_{w}=\sup_{g\in Lip(\mathbb{S}^{1}\rightarrow \mathbb{R}%
),||g||_{Lip}\leq 1}\int g~d\mu
\end{equation*}
and $||\mu ||_{s}=\mu ^{+}(\mathbb{S}^{1})+\mu ^{-}(\mathbb{S}^{1})$ where $%
\mu ^{\pm }$ are the positive and negative parts of $\mu $ (the total
variation norm). \ We apply Theorem \ref{existence1} with $(SM(\mathbb{S}%
^{1}),||~||_{w})$, $(SM(\mathbb{S}^{1}),||~||_{s})$ as a weak and strong
space. \ We remark that by Prokhorov's theorem, $P_{w}$ is complete when
considered with the $||~||_{w}$ norm.

Now let us define a projection $\pi _{n}$ as requested by Theorem \ref%
{existence1}. \ Let us consider $n\in \mathbb{N}$ and divide $\mathbb{S}^{1}$
into $n$ equal intervals $I_{1},...,I_{n},$ with $I_{i}=[x_{i},x_{i+1})$. \
Let us consider a partition of unity $\{\phi _{1},...,\phi _{n}\}$ made of
continuous piecewise linear functions $\phi _{i}$ which are affine on each
interval of the partition, such that $\phi _{i}(x_{i+1})=1$ and they are
supported on $I_{i \ {mod}(n)}\cup I_{i+1\ {mod}(n)}$ (hat functions).
Let us consider the projection $\pi _{n}:SM(\mathbb{S}^{1})\rightarrow SM(%
\mathbb{S}^{1})$ defined by 
\begin{equation*}
\pi _{n}(\mu )=\sum_{i\leq n}\delta _{x_{i+1}}\int \phi _{i}~d\mu
\end{equation*}%
we have that this projection is linear, preserves probability measures, and $%
||\pi _{n}(\mu )||_{s}\leq ||\mu ||_{s},$ $||\pi _{n}(\mu )||_{w}\leq ||\mu
||_{w}$ (the first inequality is straightforward, for the second see \ \cite[%
Proposition 9.4]{GMN2} Proposition 9.4\footnote{%
The idea of the proof is the following. We consider $\mu $ with $||\mu
||_{W}\leq 1$ and prove $||\pi _{n}\mu ||_{W}\leq 1.$ For this we first
remark that by the way the discretization is constructed, for each function $%
\tilde{g}$ such that $||\tilde{g}||_{Lip}\leq 1$ \ and $\tilde{g}$ is affine
on each interval $I_{n}$ we have $\int \tilde{g}d\mu =\int \tilde{g}d\pi
_{n}\mu \leq 1.$ Then consider a generic Lipschitz function $g$ with $%
||g||_{Lip}\leq 1$ and note that there is a function $\tilde{g}$ affine on
each interval $I_{n}$ \ such that $||\tilde{g}||_{Lip}\leq 1$ and $\int 
\tilde{g}d\pi _{n}\mu =\int gd\pi _{n}\mu $ \ and then $\int gd\pi _{n}\mu
\leq 1$.}). Since, by the definition of $\pi _{n}$, for each interval $I_{n} 
$, the part of the measure $\mu $ which is contained in $I_{n}$ is
transported to the endpoints of the interval $\{x_{i},x_{i+1}\}$ and hence
at a distance $\leq \frac{1}{n}$ we get (see \ \cite[Proposition 9.4]{GMN2},
proof of Proposition 9.5 for the details) 
\begin{equation}
||\pi _{n}(\mu )-\mu ||_{w}\leq \frac{1}{n}||\mu ||_{s}.  \label{221}
\end{equation}

Each invariant probability measure $\mu $ for \ each $L_{\delta ,\mu }$ is
such that $||\mu ||_{s}\leq 1$. The same can be said for the finite
dimensional reduced operator $\pi _{n}L_{\delta ,\pi _{n}\mu }\pi _{n},$
hence $Exi1,~Exi1.b$ are satisfied.

To verify $Exi2$ we have to verify that%
\begin{equation}
||[L_{\delta ,\mu _{1}}-L_{\delta ,\mu _{2}}]\mu ||_{w}\leq \delta K||\mu
||_{s}||\mu _{1}-\mu _{2}||_{w}  \label{on}
\end{equation}%
we remark that since $h$ is $K$ Lipschitz, for all $x\in \mathbb{S}^{1}$ 
\begin{eqnarray*}
|\Phi _{\delta ,\mu _{1}}(x)-\Phi _{\delta ,\mu _{2}}(x)| &=&\delta \int
h(x,y)~d[\mu _{1}-\mu _{2}](y) \\
&\leq &\delta K||\mu _{1}-\mu _{2}||_{w}.
\end{eqnarray*}

Hence 
\begin{eqnarray*}
||[L_{\delta ,\mu _{1}}-L_{\delta ,\mu _{2}}]\mu ||_{w} &=&||[L_{\Phi
_{\delta ,\mu _{1}}}-L_{\Phi _{\delta ,\mu _{2}}}]L_{T}\mu ||_{w} \\
&\leq &\underset{x\in \mathbb{S}^{1}}{\sup }|\Phi _{\delta ,\mu
_{1}}(x)-\Phi _{\delta ,\mu _{2}}(x)|~||L_{T}\mu ||_{s} \\
&\leq &\delta K||\mu _{1}-\mu _{2}||_{w}||L_{T}\mu ||_{s} \\
&\leq &\delta K||\mu _{1}-\mu _{2}||_{w}||\mu ||_{s}
\end{eqnarray*}%
leading to \ref{on}.

We can then apply Theorem \ref{existence1} \ leading to \ the existence of
an invariant probability measure for $\mathcal{L}_{\delta }$.
\end{proof}

\begin{remark}
For simplicity the statement is proved for maps on $\mathbb{S}^{1}$. It
seems that the statement can be generalized with the same idea to maps on
compact metric spaces for which there is a sequence of Lipschitz partitions
of unity, which can be used to define suitable projections $\pi _{n}$ on
combinations of delta measures placed on some sequences of $\epsilon -nets$
covering the space.
\end{remark}

\begin{remark}
In the introduction we described this statement as a kind of
Krilov-Bogoliubov theorem for mean field coupled maps. This similarity is
restricted to the fact that we get a general statement about invariant
measures and continuous maps. Our statement allows to find a fixed
probability measure for the self-consistent transfer operator $\mathcal{L}%
_{\delta }$ associated to the coupled system, and not an invariant measure
for a continuous map on a compact metric space.

We remark that finding such a fixed probability measure for the
self-consistent transfer operator $\mathcal{L}_{\delta }$ associated to the
system (which is a measure on $\mathbb{S}^{1}$) is not equivalent to the
problem of finding invariant measures for the global system $(\mathcal{X},%
\mathcal{T})$ associated to a network of coupled maps as defined in Section %
\ref{expla}. These are measures on $(\mathbb{S}^{1})^{M}$ which could be
equipped with the product $\sigma -$algebra. In this case the system $(%
\mathcal{X},\mathcal{T})$ can also have invariant measures which are product
of different measures on $\mathbb{S}^{1}.$ For a trivial example let us
think about the uncoupled system $(\mathbb{S}^{1},T,\delta ,h)$ where $T$ is
the doubling map and $\delta =0$. In this case, an invariant measure is
given by the product of the Lebesgue measure on some set of coordinates and
the delta measure placed in $0$ (which is a fixed point for $T$) in all the
other coordinates.
\end{remark}

\section{Coupled expanding circle maps\label{secmap}}

In this section we consider self-consistent operators modeling a network of
all to all coupled expanding maps, we will prove the existence of an
absolutely continuous invariant measure and exponential convergence to
equilibrium for this kind of systems. Similar results appeared in \cite{K}
where the rigorous study\ of maps coupled by mean field interaction was
started and in \cite{bal}, \cite{ST} in a more general setting. We \ will \
also consider the zero-coupling limit and the related linear response. We
show that the transfer operators in this limit satisfy the assumptions of
our general theorems considering as a strong and weak spaces suitable
Sobolev spaces $W^{k,1}(\mathbb{S}^{1})$ of measures having a density whose $%
k$-th derivative is in $L^{1}(\mathbb{S}^{1}).$

Let \ $k>1$ and $T_{0}\in C^{k}(\mathbb{S}^{1},\mathbb{S}^{1})$ be \ a
nonsingular map\footnote{%
A nonsingular map $T$ is a map such that for any Lebesgue measurable set $A$
we have $m(A)=0\iff m(T^{-1}(A))=0$, where $m$ is the Lebesgue measure. If $%
T $ is nonsingular its associated pushforward map induces a function $L^{1}(%
\mathbb{S}^{1},\mathbb{R})\rightarrow L^{1}(\mathbb{S}^{1},\mathbb{R})$.} of
the circle. Let us denote the transfer operator associated with $T_{0}$ by $%
L_{T_{0}}.$ We recall that the transfer operator associated with a map can
be defined on signed measures by the pushforward of the map, however when
the map is nonsingular, this operator preserves measures having a density
with respect to the Lebesgue measure, $L^{1}(\mathbb{S}^{1},\mathbb{R})$ and
then with a small abuse of notation, identifying a measure $\mu $ with its
density $h_{\mu }=\frac{d\mu }{dm}$ with respect to the Lebesgue measure $m,$
the same operator can be also considered as $L_{T_{0}}:L^{1}(\mathbb{S}^{1},%
\mathbb{R})\rightarrow L^{1}(\mathbb{S}^{1},\mathbb{R})$. In this case,
given any density $\phi \in L^{1}(\mathbb{S}^{1},\mathbb{R})$ the action of
the operator on the density can then be described by the explicit formula%
\begin{equation*}
\lbrack L_{0}(\phi )](x):=\sum_{y\in T_{0}^{-1}(x)}\frac{\phi (y)}{%
|T_{0}^{\prime }(y)|}.
\end{equation*}

Given $h\in C^{k}(\mathbb{S}^{1}\times \mathbb{S}^{1},\mathbb{R}),$ $\delta
\geq 0$ and (a probability density) $\psi \in L^{1}(\mathbb{S}^{1},\mathbb{R}%
)$, coherently with Section \ref{expla}, we define \ $\Phi _{\delta ,\psi }:%
\mathbb{S}^{1}\rightarrow \mathbb{S}^{1}$ as 
\begin{equation*}
\Phi _{\delta ,\psi }(x)=x+\pi _{\mathbb{S}^{1}}(\delta \int_{\mathbb{S}%
^{1}}h(x,y)\psi (y)dy).
\end{equation*}%
We will always consider $\delta $ small enough such that $\Phi _{\delta
,\psi }$ is a diffeomorphism. Denote by $Q_{\delta ,\psi }$ the transfer
operator associated with $\Phi _{\delta ,\psi },$defined as 
\begin{equation}
\lbrack Q_{\delta ,\psi }(\phi )](x)=\frac{\phi (\Phi _{\delta ,\psi
}^{-1}(x))}{|\Phi _{\delta ,\psi }^{\prime }(\Phi _{\delta ,\psi }^{-1}(x))|}
\label{wex}
\end{equation}%
for any $\phi \in L^{1}(S^{1},\mathbb{R})$.

We will consider expanding maps $T_{0}:\mathbb{S}^{1}\rightarrow \mathbb{S}%
^{1}$ satisfying the following assumptions:

\begin{enumerate}
\item $T_{0}\in C^{6},$

\item there is $\alpha <1$ such that $\forall x\in \mathbb{S}^{1}$, $%
|T_{0}^{\prime }(x)|\geq \alpha ^{-1}>1$.
\end{enumerate}

\begin{definition}
\label{ufm}A set $A_{M,L}$ of expanding maps is called a \emph{uniform family%
} with parameters $M\geq 0$ and $L>1$ if it satisfies uniformly the
expansiveness and regularity condition: $\forall T\in A_{M,L}$%
\begin{equation*}
||T||_{C^{6}}\leq M,~\inf_{x\in S^{1}}|T^{\prime }(x)|\geq L.
\end{equation*}
\end{definition}

It is well known that the transfer operator associated with a smooth
expanding map has spectral gap and it is quasicompact when acting on
suitable Sobolev spaces (see e.g.\cite{L2}). \ In the following we recall
some particularly important related estimates we will use in this paper. We
start by recalling the fact that such transfer operators satisfy some one
step Lasota Yorke inequalities over these Sobolev spaces (see \cite{GS},
Lemma 29 \ and its proof). This will be useful when applying the results of
Section \ref{sec1}.

\begin{lemma}
\label{Lemsu} Let $A_{M,L}$ be a uniform family of expanding maps, the
transfer operators $L_{T}$ associated with a map $T\in A_{M,L}$ satisfy a
uniform Lasota-Yorke inequality on $W^{k,1}(\mathbb{S}^{1})$: let $\alpha
:=L^{-1}<1$. For all $1\leq k\leq 5$ there are, $A_{k},~B_{k}\geq 0$ such
that for all $n\geq 0,$ $T\in A_{M,L}$%
\begin{eqnarray}
||L_{T}^{n}f\Vert _{W^{k-1,1}} &\leq &A_{k}||f\Vert _{W^{k-1,1}}
\label{LY13} \\
||L_{T}^{n}f\Vert _{W^{k,1}} &\leq &\alpha ^{kn}\Vert f\Vert
_{W^{k,1}}+B_{k}\Vert f\Vert _{W^{k-1,1}}.
\end{eqnarray}
\end{lemma}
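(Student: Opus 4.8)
The plan is to reduce the statement to a one-step ($n=1$) Lasota--Yorke inequality and then iterate, running an induction on the order $k$. Throughout I would work with the density representation of the transfer operator recalled above, writing, for an expanding $T\in A_{M,L}$ of degree $d$, its local inverse branches $y_1,\dots ,y_d$, so that
\begin{equation*}
[L_T\phi](x)=\sum_{i=1}^{d}\phi(y_i(x))\,|y_i'(x)|,\qquad |y_i'(x)|=\frac{1}{|T'(y_i(x))|}\le \alpha ,
\end{equation*}
where $\alpha:=L^{-1}<1$. The uniform bounds $\|T\|_{C^{6}}\le M$ and $|T'|\ge L$ guarantee that all the quantities appearing below (the branches $y_i$ and their derivatives up to order $k+1$) are controlled by constants depending only on $M$ and $L$, which is exactly what forces the final constants $\alpha ,A_k,B_k$ to be uniform over the whole family.

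First I would establish the one-step inequality. Differentiating $[L_T\phi]$ exactly $k$ times (this is where $T\in C^{6}$ and $1\le k\le 5$ enter, since $y_i\in C^{k+1}$ is needed), the Leibniz rule together with the Fa\`a di Bruno formula produces one single \emph{leading term} of the shape $\sum_i |y_i'|\,(\phi^{(k)}\circ y_i)\,(y_i')^{k}$, while every remaining term contains a derivative $\phi^{(m)}\circ y_i$ of order $m\le k-1$ multiplied by a product of derivatives of the branches. Taking absolute values, integrating over $\mathbb{S}^{1}$ and changing variables $u=y_i(x)$ on each branch (whose images tile the circle) turns the leading contribution into $\sum_i\int_{y_i(\mathbb{S}^{1})}|y_i'|^{k}\,|\phi^{(k)}|\,du\le \alpha^{k}\,\|\phi^{(k)}\|_{L^{1}}\le \alpha^{k}\|\phi\|_{W^{k,1}}$, while the same change of variables bounds each lower-order term by $C_k\,\|\phi\|_{W^{k-1,1}}$ with $C_k=C_k(M,L)$. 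Hence
\begin{equation*}
\|L_T\phi\|_{W^{k,1}}\le \alpha^{k}\|\phi\|_{W^{k,1}}+C_k\|\phi\|_{W^{k-1,1}}.
\end{equation*}

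The iteration is then routine and I would argue by induction on $k$. For $k=1$ the base estimate is the contraction $\|L_T\phi\|_{L^{1}}\le \|\phi\|_{L^{1}}$ of the (Markov) pushforward, giving $A_1=1$; iterating the one-step inequality and summing the resulting geometric series yields the $W^{1,1}$ bound with $B_1=C_1/(1-\alpha)$. For the inductive step, the first inequality at level $k$ (the $A_k$ bound on $\|L_T^{n}\phi\|_{W^{k-1,1}}$) is obtained by rewriting the second inequality at level $k-1$ and bounding $\alpha^{(k-1)n}\le 1$ and $\|\phi\|_{W^{k-2,1}}\le \|\phi\|_{W^{k-1,1}}$, so that $A_k=1+B_{k-1}$. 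Feeding this into the iterate of the one-step inequality,
\begin{equation*}
\|L_T^{n}\phi\|_{W^{k,1}}\le \alpha^{kn}\|\phi\|_{W^{k,1}}+C_k\sum_{j=0}^{n-1}\alpha^{k(n-1-j)}\|L_T^{j}\phi\|_{W^{k-1,1}}\le \alpha^{kn}\|\phi\|_{W^{k,1}}+\frac{C_kA_k}{1-\alpha^{k}}\|\phi\|_{W^{k-1,1}},
\end{equation*}
gives the second inequality with $B_k=C_kA_k/(1-\alpha^{k})$.

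I expect the genuine work to be the one-step inequality, and specifically the combinatorial bookkeeping of the Fa\`a di Bruno expansion: one must check that the \emph{unique} highest-order term is the one carrying the factor $(y_i')^{k}$ responsible for the contraction $\alpha^{k}$, and that all other terms, whose coefficients are polynomial expressions in the derivatives of $y_i$ up to order $k+1$, are dominated uniformly over $A_{M,L}$ by $\|\phi\|_{W^{k-1,1}}$. Everything else --- the change of variables, the tiling of the circle by the branch images, and the summation of the geometric series --- is bookkeeping. This is precisely the computation carried out in \cite{GS}, Lemma 29, to which the remaining details can be deferred.
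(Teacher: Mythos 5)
Your proposal is correct, but note that the paper itself offers no proof of Lemma \ref{Lemsu}: the lemma is recalled as a known fact, with the proof deferred to \cite{GS}, Lemma 29, so the real comparison is with that cited argument. Your route --- differentiate the branch-sum formula $[L_T\phi](x)=\sum_i\phi(y_i(x))\,|y_i'(x)|$ exactly $k$ times, isolate the unique leading term carrying $(y_i')^{k}$, change variables branch by branch so that this term contracts by $\alpha^{k}$ while all remaining terms are absorbed into $C_k\Vert \phi\Vert_{W^{k-1,1}}$, then iterate with an induction on $k$ --- is the classical one and is sound: the bookkeeping $A_k=1+B_{k-1}$, $B_k=C_kA_k/(1-\alpha^{k})$ works, the constants depend only on $(M,L)$ because the branch derivatives up to order $k+1\leq 6$ are controlled by $\Vert T\Vert_{C^{6}}\leq M$ and $\inf|T'|\geq L$, and the case $n=0$ is trivial since $A_k\geq 1$. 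The argument in the tradition of \cite{GS} (visible in this paper in the proof of Proposition \ref{launo}) organizes the same computation differently: instead of Fa\`a di Bruno, one iterates the operator identity $(\ref{latre})$, namely $(Lf)'=L\left(\tfrac{1}{T'}f'\right)-L\left(\tfrac{T''}{(T')^{2}}f\right)$, so that after $k$ differentiations the leading term is $L\bigl((T')^{-k}f^{(k)}\bigr)$ plus lower-order terms, and the contraction $\alpha^{kn}$ is read off from $\Vert (T')^{-k}\Vert_{\infty}\leq\alpha^{k}$ together with the fact that $L$ is an $L^{1}$-contraction --- no explicit change of variables or combinatorial tracking of branch derivatives is needed. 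Both approaches yield the same uniformity over $A_{M,L}$; the recursive identity is less combinatorial, while your direct expansion makes the origin of the factor $\alpha^{k}$ (one factor $y_i'$ for each differentiation of $\phi\circ y_i$) more transparent.
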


From this result, it is classically deduced that the transfer operator $%
L_{T} $ of a $C^{6}$ expanding map $T$ is \emph{quasi-compact} on each $%
W^{k,1}(\mathbb{S}^{1}),$ with $1\leq k\leq 5$. Furthermore, by the
topological transitivity of expanding maps, $1$ is the only eigenvalue on
the unit circle and this implies the following result. (see \cite{GS}
Proposition 30).

\begin{proposition}
\label{propora} \ For all $T\in A_{M,L}$, there are $C\geq 0$ and $\rho \in
(0,1)$ such that for all%
\begin{equation*}
g\in V_{k}:=\{g\in W^{k,1}(\mathbb{S}^{1})~s.t.~\int_{\mathbb{S}^{1}}g~dm=0\}
\end{equation*}%
with $1\leq k\leq 5$ and $n\geq 0$ it holds%
\begin{equation*}
\Vert L_{T}^{n}g\Vert _{W^{k,1}}\leq C\rho ^{n}\Vert g\Vert _{W^{k,1}}.
\end{equation*}%
In particular, the resolvent $R(1,L):=(Id-L_{T})^{-1}=\sum_{i=0}^{\infty
}L_{T}^{i}$ is a well-defined and bounded operator on $V_{k}$.
\end{proposition}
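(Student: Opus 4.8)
The plan is to deduce the statement from the uniform Lasota--Yorke inequality of Lemma~\ref{Lemsu} by the standard quasi-compactness machinery, followed by an analysis of the peripheral spectrum and a passage to the invariant subspace $V_k$.

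First I would note that the pair of inequalities in Lemma~\ref{Lemsu}, together with the compactness of the inclusion $W^{k,1}(\mathbb{S}^1)\hookrightarrow W^{k-1,1}(\mathbb{S}^1)$ (Rellich--Kondrachov on the circle), place $L_T$ acting on $W^{k,1}(\mathbb{S}^1)$ in the setting of the Hennion--Ionescu-Tulcea--Marinescu theorem. This yields that $L_T:W^{k,1}\to W^{k,1}$ is quasi-compact with essential spectral radius bounded by $\alpha^{k}<1$. Combining the two inequalities of Lemma~\ref{Lemsu} (and using $\|\cdot\|_{W^{k-1,1}}\le\|\cdot\|_{W^{k,1}}$) also shows that the iterates $L_T^n$ are uniformly bounded as operators $W^{k,1}\to W^{k,1}$, so the spectral radius is at most $1$. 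Hence, outside any disc of radius strictly larger than $\alpha^k$, the spectrum consists of finitely many eigenvalues of finite multiplicity.

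Next I would identify the peripheral spectrum. Since $L_T$ is the transfer operator of a nonsingular map it preserves positivity and the integral, so it is a Markov operator and $1$ is an eigenvalue, with eigenfunction the invariant density $h_T\in W^{k,1}$. The map $T$, being $C^6$ and uniformly expanding, is topologically transitive (indeed mixing), and this is precisely what forces $1$ to be a \emph{simple} eigenvalue and the \emph{only} eigenvalue on the unit circle. Consequently the spectrum decomposes as $\{1\}\cup\Sigma$ with $\Sigma\subseteq\{|z|\le r\}$ for some $r<1$. I would then pass to $V_k$: the functional $g\mapsto\int_{\mathbb{S}^1}g\,dm$ is continuous on $W^{k,1}$ and invariant under $L_T$, so $V_k$ is a closed $L_T$-invariant subspace, and it is exactly the spectral subspace complementary to the one-dimensional eigenspace $\langle h_T\rangle$ of the eigenvalue $1$. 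Therefore the spectral radius of $L_T|_{V_k}$ is at most $r<1$; fixing any $\rho$ with $r<\rho<1$, Gelfand's spectral radius formula furnishes a constant $C\ge 0$ with $\|L_T^n g\|_{W^{k,1}}\le C\rho^n\|g\|_{W^{k,1}}$ for all $g\in V_k$ and $n\ge 0$, which is the claimed estimate. Finally, since $\sigma(L_T|_{V_k})$ lies in the open unit disc, $Id-L_T$ is invertible on $V_k$ and the Neumann series $\sum_{i=0}^{\infty}L_T^i$ converges there in operator norm (geometrically, by the previous bound), so $(Id-L_T)^{-1}$ is well defined and bounded on $V_k$.

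The main obstacle is the identification of the peripheral spectrum, i.e. establishing that $1$ is simple and is the only unimodular eigenvalue; this rests on the topological transitivity (mixing) of uniformly expanding circle maps rather than on the functional-analytic estimates, which are routine once the Lasota--Yorke inequality is in hand.
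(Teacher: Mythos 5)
Your proposal is correct and follows essentially the same route as the paper, which deduces the result from the uniform Lasota--Yorke inequality of Lemma~\ref{Lemsu} via quasi-compactness, then uses topological transitivity of expanding maps to conclude that $1$ is the only (and simple) peripheral eigenvalue, citing \cite{GS}, Proposition 30, for the details. Your spectral-decomposition argument on $V_k$, the Gelfand spectral radius bound, and the Neumann series for the resolvent are exactly the standard machinery being invoked there.
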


Now we recall some estimates relative to small perturbations of expanding
maps and their associated transfer operators. These will be useful to apply
our general framework to self-consistent transfer operators representing a
family of coupled expanding maps. The estimates will be useful to check that
the assumptions of our general theorems are satisfied. We will again
identify absolutely continuous measures with their densities and consider
the spaces $W^{3,1}(\mathbb{S}^{1})$,..., $L^{1}(\mathbb{S}^{1})$ as
strongest, strong and weak space.

\begin{proposition}
\label{launo}If $L_{0}$ and $L_{1}$ are transfer operators associated with
expanding maps $T_{0}$ and $T_{1},$ then there is a $C\in \mathbb{R}$\ such
that $\forall k\in \{1,2,3\},~\forall f\in W^{k,1}$:%
\begin{equation}
||(L_{1}-L_{0})f||_{W^{k-1,1}}\leq C||T_{1}-T_{0}||_{C^{k+2}}||f||_{W^{k,1}}.
\end{equation}
\end{proposition}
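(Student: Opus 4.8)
The plan is to realize $L_1-L_0$ as the integral of the derivative of the transfer operator along a path of maps joining $T_0$ to $T_1$, and to exploit the fact that this derivative is an \emph{exact spatial derivative}, which is precisely what produces the loss of exactly one degree of regularity in the estimate. Write $w:=T_1-T_0$ (a genuine $C^{6}$ function on $\mathbb{S}^{1}$, once we assume, as we may for close maps, that $T_0$ and $T_1$ have the same degree and orientation so that their lifts differ by a periodic function), and set $T_s:=T_0+sw$ for $s\in[0,1]$. For $T_1$ sufficiently $C^1$-close to $T_0$ we have $T_s'=(1-s)T_0'+sT_1'\geq L>1$ uniformly, so $\{T_s\}_{s\in[0,1]}$ is a uniform family of expanding maps in the sense of Definition \ref{ufm}, and $s\mapsto L_{T_s}f$ is a $C^1$ path.

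First I would establish the key identity
\begin{equation*}
\frac{d}{ds}L_{T_s}\phi=-\frac{d}{dx}\big[L_{T_s}(w\phi)\big].
\end{equation*}
This is cleanest in the \emph{weak formulation}, which avoids the cumbersome term-by-term differentiation of the explicit sum over the (moving) preimages. For any test function $g$ one differentiates the defining relation $\int (g\circ T_s)\,\phi\,dx=\int g\,(L_{T_s}\phi)\,dx$ in $s$, producing $\int (g'\circ T_s)\,w\phi\,dx$ on the left; rewriting $\int (g'\circ T_s)\,w\phi\,dx=\int g'\,L_{T_s}(w\phi)\,dx$ by the pushforward property and integrating by parts on the circle gives $-\int g\,(L_{T_s}(w\phi))'\,dx$, and since $g$ is arbitrary the identity follows. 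Integrating over $s\in[0,1]$ yields the working formula
\begin{equation*}
(L_1-L_0)f=-\frac{d}{dx}\int_0^1 L_{T_s}(wf)\,ds.
\end{equation*}

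The estimate is then routine. Using $\|u'\|_{W^{k-1,1}}\leq\|u\|_{W^{k,1}}$ and pulling the norm inside the integral,
\begin{equation*}
\|(L_1-L_0)f\|_{W^{k-1,1}}\leq\int_0^1\|L_{T_s}(wf)\|_{W^{k,1}}\,ds.
\end{equation*}
Since $\{T_s\}$ is a uniform family, the one-step Lasota--Yorke inequality of Lemma \ref{Lemsu} (taken with $n=1$, valid for $1\leq k\leq 5$ and in particular for $k\in\{1,2,3\}$) bounds $L_{T_s}$ uniformly on $W^{k,1}$, so $\|L_{T_s}(wf)\|_{W^{k,1}}\leq C_1\|wf\|_{W^{k,1}}$ with $C_1$ independent of $s$. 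The Leibniz rule gives the multiplication estimate $\|wf\|_{W^{k,1}}\leq C_2\|w\|_{C^{k}}\|f\|_{W^{k,1}}$, and since $\|w\|_{C^{k}}=\|T_1-T_0\|_{C^{k}}\leq\|T_1-T_0\|_{C^{k+2}}$ we conclude with $C:=C_1C_2$.

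The main obstacle is the rigorous justification of the derivative formula together with the fundamental-theorem-of-calculus step: the preimages depend on $s$ and the naive differentiation of the explicit sum is delicate, which is exactly why I route the argument through the duality identity, whose only analytic inputs are the smoothness of $s\mapsto T_s$ and one integration by parts. A secondary point to verify is that the linear interpolation genuinely remains a uniform family of expanding maps; this holds for $T_1$ close to $T_0$ and is the only place where closeness of the maps enters. The appearance of $C^{k+2}$ rather than the sharper $C^{k}$ in the statement is harmless, since $\|\cdot\|_{C^{k}}\leq\|\cdot\|_{C^{k+2}}$.
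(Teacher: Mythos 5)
Your proof is correct, but it takes a genuinely different route from the paper's. The paper argues by differentiating in the \emph{space} variable: it quotes the case $k=1$ from an external reference (\cite{Gdisp}, Proposition 26), and for $k=2,3$ it uses the identity $(L_{i}f)^{\prime}=L_{i}\bigl(\tfrac{1}{T_{i}^{\prime}}f^{\prime}\bigr)-L_{i}\bigl(\tfrac{T_{i}^{\prime\prime}}{(T_{i}^{\prime})^{2}}f\bigr)$ to push derivatives through the operators, then telescopes the differences (changing first the operator, then the multiplier) and reduces each term to the $k=1$ case plus multiplication estimates; this is exactly where the $C^{k+2}$ norm of $T_{1}-T_{0}$ enters. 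You instead differentiate in the \emph{parameter}: interpolate $T_{s}=T_{0}+sw$, prove the linear-response identity $\tfrac{d}{ds}L_{T_{s}}\phi=-(L_{T_{s}}(w\phi))^{\prime}$ by duality, and combine the fundamental theorem of calculus with the uniform Lasota--Yorke bound of Lemma \ref{Lemsu} along the path and a Leibniz estimate. What your route buys: no induction on $k$, no external base case, and the sharper bound with $\|T_{1}-T_{0}\|_{C^{k}}$ in place of $\|T_{1}-T_{0}\|_{C^{k+2}}$ (which indeed implies the stated inequality); moreover your constants depend only on the uniform-family parameters $M,L$, which is the uniformity the paper actually needs in applications such as Proposition \ref{existenceexp}. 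What the paper's route buys: it never requires the interpolating segment to consist of expanding maps. However, your closeness caveat is in fact unnecessary, so this is not a real loss: for $\|T_{1}-T_{0}\|_{C^{k+2}}$ to be meaningful at all, $T_{1}-T_{0}$ must lift to a periodic function, i.e. $T_{0}$ and $T_{1}$ have the same degree; since an expanding map has nonvanishing derivative, the sign of $T_{i}^{\prime}$ is the sign of the degree, hence $T_{0}^{\prime}$ and $T_{1}^{\prime}$ have the same sign and $|T_{s}^{\prime}|=|(1-s)T_{0}^{\prime}+sT_{1}^{\prime}|\geq\min\bigl(\inf|T_{0}^{\prime}|,\inf|T_{1}^{\prime}|\bigr)>1$ for all $s$ by convexity, with $\|T_{s}\|_{C^{6}}\leq\max\bigl(\|T_{0}\|_{C^{6}},\|T_{1}\|_{C^{6}}\bigr)$, so $\{T_{s}\}$ is automatically a uniform family in the sense of Definition \ref{ufm} whenever the statement parses. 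With that one-line observation added, your argument proves the proposition in full generality.
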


\begin{proof}
In the case $k=1$ the proof of this statement can be found for example in 
\cite{Gdisp}, Proposition 26. When $k=2$ we have to prove that 
\begin{equation}
||((L_{1}-L_{0})f)^{\prime }||_{L^{1}}\leq
C||T_{1}-T_{0}||_{C^{4}}||f||_{W^{2,1}}  \label{ladue}
\end{equation}%
we have the well known formula (see \cite{Gdisp} Equation 3) valid for $\
i\in \{0,1\}$%
\begin{equation}
\left( L_{i}f\right) ^{\prime }=L_{i}\left( \frac{1}{T^{\prime }}f^{\prime
}\right) -L_{i}\left( \frac{T^{\prime \prime }}{(T^{^{\prime }})^{2}}%
f\right) .  \label{latre}
\end{equation}

By $(\ref{latre})$ we have%
\begin{eqnarray*}
\left\Vert ((L_{1}-L_{0})f)^{\prime }\right\Vert _{1} &\leq &\left\Vert
L_{1}\left( \frac{1}{T_{1}^{\prime }}f^{\prime }\right) -L_{0}\left( \frac{1%
}{T_{0}^{\prime }}f^{\prime }\right) \right\Vert _{1} \\
&&+\left\Vert L_{1}\left( \frac{T_{1}^{\prime \prime }}{(T_{1}^{^{\prime
}})^{2}}f\right) -L_{0}\left( \frac{T_{0}^{\prime \prime }}{(T_{0}^{^{\prime
}})^{2}}f\right) \right\Vert _{1}.
\end{eqnarray*}

Considering each summand and applying the statement for the case $k=1$ we get%
\begin{eqnarray*}
\left\Vert L_{1}\left( \frac{1}{T_{1}^{\prime }}f^{\prime }\right)
-L_{0}\left( \frac{1}{T_{0}^{\prime }}f^{\prime }\right) \right\Vert _{1}
&\leq &\left\Vert L_{1}\left( \frac{1}{T_{1}^{\prime }}f^{\prime }\right)
-L_{1}\left( \frac{1}{T_{0}^{\prime }}f^{\prime }\right) \right\Vert _{1} \\
&&+\left\Vert L_{1}\left( \frac{1}{T_{0}^{\prime }}f^{\prime }\right)
-L_{0}\left( \frac{1}{T_{0}^{\prime }}f^{\prime }\right) \right\Vert _{1} \\
&\leq &||T_{1}-T_{0}||_{C^{4}}K_{1}||f^{\prime
}||_{1}+||T_{1}-T_{0}||_{C^{4}}C\left\Vert \frac{1}{T_{0}^{\prime }}%
f^{\prime }\right\Vert _{W^{1,1}} \\
&\leq &||T_{1}-T_{0}||_{C^{4}}[K_{1}||f^{\prime }||_{1}+C\left\Vert \frac{1}{%
T_{0}^{\prime }}\right\Vert _{C^{1}}||f^{\prime }||_{W^{1,1}}]
\end{eqnarray*}

for some constant $K_{1}\geq 0$. \ Similarly%
\begin{eqnarray*}
\left\Vert L_{1}\left( \frac{T_{1}^{\prime \prime }}{(T_{1}^{^{\prime }})^{2}%
}f\right) -L_{0}\left( \frac{T_{0}^{\prime \prime }}{(T_{0}^{^{\prime }})^{2}%
}f\right) \right\Vert _{1} &\leq &\left\Vert L_{1}\left( \frac{T_{1}^{\prime
\prime }}{(T_{1}^{^{\prime }})^{2}}f\right) -L_{1}\left( \frac{T_{0}^{\prime
\prime }}{(T_{0}^{^{\prime }})^{2}}f\right) \right\Vert _{1} \\
&&+\left\Vert L_{1}\left( \frac{T_{0}^{\prime \prime }}{(T_{0}^{^{\prime
}})^{2}}f\right) -L_{0}\left( \frac{T_{0}^{\prime \prime }}{(T_{0}^{^{\prime
}})^{2}}f\right) \right\Vert _{1} \\
&\leq &\left\Vert \frac{T_{1}^{\prime \prime }}{(T_{1}^{^{\prime }})^{2}}-%
\frac{T_{0}^{\prime \prime }}{(T_{0}^{^{\prime }})^{2}}\right\Vert _{\infty
}||f||_{1} \\
&&+||T_{1}-T_{0}||_{C^{4}}C\left\Vert \frac{T_{0}^{\prime \prime }}{%
(T_{0}^{^{\prime }})^{2}}f\right\Vert _{W^{1,1}} \\
&\leq &||T_{1}-T_{0}||_{C^{4}}[K_{2}||f||_{1}+\delta C\left\Vert \frac{%
T_{0}^{\prime \prime }}{(T_{0}^{^{\prime }})^{2}}\right\Vert
_{C^{1}}||f||_{W^{1,1}}]
\end{eqnarray*}%
for some constant $K_{2}\geq 0$. Proving the statement. We remark that $%
\left\Vert \frac{T_{0}^{\prime \prime }}{(T_{0}^{^{\prime }})^{2}}%
\right\Vert _{C^{1}}$ involves the third derivative of $T_{0}.$

When $k=3$ we have to prove that 
\begin{equation}
||((L_{1}-L_{0})f)^{\prime \prime }||_{1}\leq
C||T_{1}-T_{0}||_{C^{5}}||f||_{W^{3,1}}
\end{equation}%
taking a further derivative in $(\ref{latre})$ for a transfer operator $%
L_{1} $ we get 
\begin{equation*}
((L_{1}f)^{^{\prime }})^{\prime }=\left( L_{1}\left( \frac{1}{T^{\prime }}%
f^{\prime }\right) \right) ^{\prime }-\left( L_{1}\left( \frac{T^{\prime
\prime }}{(T^{^{\prime }})^{2}}f\right) \right) ^{\prime }
\end{equation*}%
where%
\begin{eqnarray*}
\left( L_{1}\left( \frac{1}{T^{\prime }}f^{\prime }\right) \right) ^{\prime
} &=&L_{1}\left( \frac{1}{T^{\prime }}(\frac{1}{T^{\prime }}f^{\prime
})^{\prime }\right) -L_{1}\left( \frac{T^{\prime \prime }}{(T^{^{\prime
}})^{2}}(\frac{1}{T^{\prime }}f^{\prime })\right) \\
&=&L_{1}\left( \frac{1}{T^{\prime }}(\frac{-T^{\prime \prime }}{(T^{\prime
})^{2}}f^{\prime }+\frac{1}{T^{\prime }}f^{^{\prime \prime }})\right)
-L_{1}\left( \frac{T^{\prime \prime }}{(T^{^{\prime }})^{2}}(\frac{1}{%
T^{\prime }}f^{\prime })\right)
\end{eqnarray*}%
and%
\begin{eqnarray*}
\left( L_{1}(\frac{T^{\prime \prime }}{(T^{^{\prime }})^{2}}f)\right)
^{\prime } &=&L_{1}\left( \frac{1}{T^{\prime }}(\frac{T^{\prime \prime }}{%
(T^{^{\prime }})^{2}}f)^{\prime }\right) -L_{1}\left( \frac{T^{\prime \prime
}}{(T^{^{\prime }})^{2}}(\frac{T^{\prime \prime }}{(T^{^{\prime }})^{2}}%
f)\right) \\
&=&L_{1}\left( \frac{1}{T^{\prime }}((\frac{T^{\prime \prime }}{(T^{^{\prime
}})^{2}})^{\prime }f+\frac{T^{\prime \prime }}{(T^{^{\prime }})^{2}}%
f^{\prime })\right) -L_{1}\left( \frac{T^{\prime \prime }}{(T^{^{\prime
}})^{2}}(\frac{T^{\prime \prime }}{(T^{^{\prime }})^{2}}f)\right)
\end{eqnarray*}%
and the proof can be concluded as before.
\end{proof}

Now we can prove that the self-consistent transfer operator associated with
a family of coupled expanding maps has a regular invariant measure. We
remark that since expanding maps are continuous, the mere existence for an
invariant measure for these systems can be obtained by Proposition \ref{KB}.
In the following result we prove the existence in of such measure in a space
of measures having a smooth density. Let us consider the expanding map $%
T_{0} $ and denote with $L_{T_{0}}$ its transfer operator, \ consider $%
\delta \geq 0$ and a coupling function $\ h\in C^{6}(\mathbb{S}^{1}\times 
\mathbb{S}^{1}\rightarrow \mathbb{R})$, consider the extended system $(%
\mathbb{S}^{1},T_{0},\delta ,h)$ in which these maps are coupled by $h$ as
explained in Section \ref{expla} \ and the associated self-consistent
transfer operator $\mathcal{L}_{\delta }:L^{1}(\mathbb{S}^{1},\mathbb{R}%
)\rightarrow L^{1}(\mathbb{S}^{1},\mathbb{R})$%
\begin{equation}
\mathcal{L}_{\delta }(\phi )=Q_{\delta ,\phi }(L_{T_{0}}(\phi ))
\end{equation}%
as defined at $($\ref{cupled}$)$. We show that this transfer operator has
under suitable assumptions a fixed probability density in $W^{5,1}$ which is
unique when \ $\delta $ is small enough.

\begin{proposition}[Existence and uniqueness of the invariant measure]
\label{existenceexp}Let $T_{0}$, $h$, $\delta $ and $\mathcal{L}_{\delta }$
as above. Suppose $\delta $ is such that the set $\underset{\phi \in P_{w}}{%
\cup }\{\Phi _{\delta ,\phi }\circ T_{0}\}\subseteq A_{M,L}$ is contained in
a uniform family of expanding maps with parameters $M,L$ (see Definition \ref%
{ufm}). Then there is at least one probability density $f_{\delta }\in
W^{5,1}$ such that%
\begin{equation*}
\mathcal{L}_{\delta }(f_{\delta })=f_{\delta }.
\end{equation*}%
For every such \ invariant measure, $||f_{\delta }||_{W^{3,1}}\leq C(M,L)$
(the $W^{5,1}$ norm is bonded by a constant only depending on $M$ and $L$).

Furthermore we have the uniqueness in the weak coupling regime. There is $%
\overline{\delta }$ such that for each\ $\delta \leq \overline{\delta \text{ 
}}$, $\mathcal{L}_{\delta }$ \ has unique invariant measure in $L^{1}$.
\end{proposition}

Before the proof of this proposition we need to collect some further
preliminary result.

First we prove a one-step Lasota-Yorke inequality for the bounded variation
norm for expanding maps. This result is surely known to the experts. We
prove it for completeness.

\begin{proposition}
\label{LYBV}Let $T$ be an expanding map of the circle. Let $\phi $ a bounded
variation density. Then 
\begin{equation}
Var(L_{T}(\phi ))\leq \frac{1}{\inf_{\mathbb{S}^{1}}(T^{\prime })}Var(\phi
)+\sup_{\mathbb{S}^{1}}(|\frac{T^{\prime \prime }}{T^{\prime 2}}|)\int_{%
\mathbb{S}^{1}}|\phi |~dm.  \label{ok}
\end{equation}
\end{proposition}

\begin{proof}
Let us suppose $T$ of degree $n$ and let us consider $y_{1},...,y_{k}\in 
\mathbb{S}^{1}$. \ Suppose $I_{i}=[y_{i},y_{i+1}]$ (where the indices are
considered modulo $k$). Suppose $T^{-1}(I_{i})=\cup _{1\leq j\leq n}J_{j,i}$
\ and denote $J_{j,i}=[l_{j,i},r_{j,i}]$ (the left and right endpoints).
Given a function $\phi :\mathbb{S}^{1}\rightarrow \mathbb{R}$, let us denote
by 
\begin{equation*}
v(\phi ,I_{i}):=|\phi (y_{i \ {mod}(k)})-\phi (y_{i+1 \  {mod}(k)})|.
\end{equation*}%
We estimate $Var(L_{T}[\phi ])$. We have $Var(L_{T}[\phi ])\leq
\sum_{i=1}^{k}v(L[\phi ],I_{i})$.

We have that%
\begin{eqnarray*}
v(L[\phi ],I_{i}) &=&|L[\phi ](y_{i~ {mod}(k)})-L[\phi ](y_{i+1~  \ {mod%
}(k)})| \\
&\leq &|\sum_{j=1}^{n}\frac{\phi (l_{j,i})}{T^{^{\prime }}(l_{j,i})}-\frac{%
\phi (r_{j,i})}{T^{^{\prime }}(r_{j,i})}| \\
&\leq &|\sum_{j=1}^{n}\frac{\phi (l_{j,i})}{T^{^{\prime }}(l_{j,i})}-\frac{%
\phi (r_{j,i})}{T^{^{\prime }}(l_{j,i})}|+|\sum_{j=1}^{n}\frac{\phi (r_{j,i})%
}{T^{^{\prime }}(l_{j,i})}-\frac{\phi (r_{j,i})}{T^{^{\prime }}(r_{j,i})}| \\
&\leq &\frac{1}{\inf_{\mathbb{S}^{1}}(T^{\prime })}|\sum_{j=1}^{n}\phi
(l_{j,i})-\phi (r_{j,i})|+|\sum_{j=1}^{n}\frac{\phi (r_{j,i})}{T^{^{\prime
}}(l_{j,i})}-\frac{\phi (r_{j,i})}{T^{^{\prime }}(r_{j,i})}|.
\end{eqnarray*}

The second summand can be bounded by remarking that by Lagrange theorem
there is $\xi _{j,i}\in J_{j,i}$ such that 
\begin{equation*}
|\frac{1}{T^{^{\prime }}(l_{j,i})}-\frac{1}{T^{^{\prime }}(r_{j,i})}|=|\frac{%
T^{\prime \prime }(\xi _{i})}{(T^{\prime }(\xi _{i}))^{2}}||r_{j,i}-l_{j,i}|.
\end{equation*}

Then%
\begin{eqnarray*}
|\sum_{j=1}^{n}\frac{\phi (r_{j,i})}{T^{^{\prime }}(l_{j,i})}-\frac{\phi
(r_{j,i})}{T^{^{\prime }}(r_{j,i})}| &\leq &\sum_{j=1}^{n}|\phi (r_{j,i})||%
\frac{1}{T^{^{\prime }}(l_{j,i})}-\frac{1}{T^{^{\prime }}(r_{j,i})}| \\
&\leq &\sum_{j=1}^{n}|\phi (r_{j,i})||\frac{T^{\prime \prime }(\xi _{i})}{%
(T^{\prime }(\xi _{i}))^{2}}||r_{j,i}-l_{j,i}| \\
&\leq &\sup_{x\in \mathbb{S}^{1}}|\frac{T^{\prime \prime }(\xi _{i})}{%
(T^{\prime }(\xi _{i}))^{2}}|\sum_{j=1}^{n}|\phi (r_{j,i})||r_{j,i}-l_{j,i}|
\end{eqnarray*}

Finally we have%
\begin{eqnarray*}
\sum_{i=1}^{k}v(L[\phi ],I_{i}) &\leq &\sum_{i=1}^{k}[\frac{1}{\inf_{\mathbb{%
S}^{1}}(T^{\prime })}|\sum_{j=1}^{n}\phi (l_{j,i})-\phi
(r_{j,i})|+\sup_{x\in \mathbb{S}^{1}}|\frac{T^{\prime \prime }(\xi _{i})}{%
(T^{\prime }(\xi _{i}))^{2}}|\sum_{j=1}^{n}|\phi (r_{j,i})||r_{j,i}-l_{j,i}|]
\\
&\leq &\frac{1}{\inf_{\mathbb{S}^{1}}(T^{\prime })}\sum_{i=1}^{k}%
\sum_{j=1}^{n}v(\phi ,J_{j,i})+\sup_{x\in \mathbb{S}^{1}}|\frac{T^{\prime
\prime }(\xi _{i})}{(T^{\prime }(\xi _{i}))^{2}}|\sum_{i=1}^{k}%
\sum_{j=1}^{n}|\phi (r_{j,i})||r_{j,i}-l_{j,i}|.
\end{eqnarray*}

We remark that when the subdivision $J_{j,i}$ is fine enough $%
\sum_{i=1}^{k}\sum_{j=1}^{n}|\phi (r_{j,i})||r_{j,i}-l_{j,i}|\leq 2\int_{%
\mathbb{S}^{1}}\phi ~dm$ and $\sum_{i=1}^{k}\sum_{j=1}^{n}v(\phi
,J_{j,i})\leq Var(\phi )$. This leads directly to the result.
\end{proof}

The following Lemma is about the nowadays well known statistical stability
of expanding maps (see e.g. \cite{Gdisp} Section 4 and Section 7.4. \ for
more details).

\begin{lemma}
\label{lipz} Given a uniform set of expanding maps $A_{M,L}$, there is $%
K\geq 1$ such that it holds%
\begin{equation*}
||f_{1}-f_{2}||_{L^{1}}\leq K||T_{1}-T_{2}||_{C^{6}}
\end{equation*}%
for all $T_{1},T_{2}\in A_{M,L}$ having \ $f_{1},f_{2}\in W^{5,1}$ as
absolutely continuous invariant densities.
\end{lemma}

Now we estimate how the transfer operator $Q_{\delta ,\psi }$ changes when
changing $\psi $.\ This will allow to apply Lemma \ref{lipz} \ in the proof
of Proposition \ref{existence}.

\begin{lemma}
\label{rimonta}If $T_{0},h\in C^{k}$ there are $K_{1}$ $\geq 1$ such that
for all $\psi ,\phi \in L^{1}$ 
\begin{eqnarray*}
||\Phi _{\delta ,\psi }-\Phi _{\delta ,\phi }||_{C^{k}} &\leq &\delta
K_{1}||\psi -\phi ||_{L^{1}} \\
||\Phi _{\delta ,\psi }\circ T_{0}-\Phi _{\delta ,\phi }\circ
T_{0}||_{C^{k}} &\leq &\delta K_{1}||\psi -\phi ||_{L^{1}}.
\end{eqnarray*}
\end{lemma}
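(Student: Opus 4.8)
The plan is to reduce both estimates to a single elementary bound on the $C^{k}$ norm of the real-valued function $g_{\psi}(x):=\delta \int_{\mathbb{S}^{1}}h(x,y)\psi (y)\,dy$ and then to transport that bound through the covering projection $\pi$ and, for the second inequality, through precomposition with $T_{0}$. First I would observe that in both displayed inequalities the identity part of $\Phi _{\delta ,\psi }$ cancels. Indeed, writing $\Phi _{\delta ,\psi }(x)=x+\pi (g_{\psi }(x))$ and using that $\pi $ is a homomorphism of abelian groups $(\mathbb{R},+)\rightarrow (\mathbb{S}^{1},+)$, we get $\Phi _{\delta ,\psi }(x)-\Phi _{\delta ,\phi }(x)=\pi (g_{\psi }(x)-g_{\phi }(x))$. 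Since $\pi $ is a local isometry (its derivative is $1$ in the arc-length coordinate), for $\delta $ small the $\mathbb{S}^{1}$-valued map $\pi \circ (g_{\psi }-g_{\phi })$ lifts to the real function $g_{\psi }-g_{\phi }$, so its $C^{k}$ norm is controlled by $||g_{\psi }-g_{\phi }||_{C^{k}}$. Thus it suffices to estimate this latter quantity.

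Next I would prove the core estimate $||g_{\psi }-g_{\phi }||_{C^{k}}\leq \delta ||h||_{C^{k}}||\psi -\phi ||_{L^{1}}$. Because $h\in C^{k}$, its $x$-partial derivatives up to order $k$ are continuous and bounded, so one may differentiate under the integral sign to obtain $\partial _{x}^{j}(g_{\psi }-g_{\phi })(x)=\delta \int_{\mathbb{S}^{1}}\partial _{x}^{j}h(x,y)\,(\psi (y)-\phi (y))\,dy$ for $0\leq j\leq k$. Bounding each integral by the $L^{\infty }$--$L^{1}$ duality gives $|\partial _{x}^{j}(g_{\psi }-g_{\phi })(x)|\leq \delta ||\partial _{x}^{j}h||_{\infty }||\psi -\phi ||_{L^{1}}$, and taking the supremum over $x$ and over $j\leq k$ yields the claim. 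Combined with the previous paragraph, this establishes the first inequality with $K_{1}$ a fixed multiple of $\max (1,||h||_{C^{k}})$.

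For the second inequality I would again cancel the $T_{0}$ parts, noting that $\Phi _{\delta ,\psi }\circ T_{0}-\Phi _{\delta ,\phi }\circ T_{0}=\pi \bigl((g_{\psi }-g_{\phi })\circ T_{0}\bigr)$, so it remains to bound $||(g_{\psi }-g_{\phi })\circ T_{0}||_{C^{k}}$. Since $T_{0}\in C^{k}$, a standard Faà di Bruno (chain rule) estimate furnishes a constant $C(||T_{0}||_{C^{k}})$ with $||u\circ T_{0}||_{C^{k}}\leq C(||T_{0}||_{C^{k}})\,||u||_{C^{k}}$ for every $u\in C^{k}(\mathbb{S}^{1})$. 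Applying this to $u=g_{\psi }-g_{\phi }$ and invoking the core estimate gives the second inequality with $K_{2}=\max (1,C(||T_{0}||_{C^{k}})K_{1})$.

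The analytic content here is routine: differentiation under the integral sign, the $L^{\infty }$--$L^{1}$ duality, and the chain rule present no genuine obstacle. The only point requiring care is the bookkeeping for the $\mathbb{S}^{1}$-valued maps---namely justifying that the differences and $C^{k}$ norms of $\Phi _{\delta ,\psi }$ may legitimately be computed through the real lift $g_{\psi }$ and the local isometry $\pi $, which is where the smallness of $\delta $ (keeping the perturbations within a fundamental domain) is used.
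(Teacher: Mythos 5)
Your proposal is correct and follows essentially the same route as the paper's proof: cancel the identity (resp.\ $T_{0}$) part using the group structure on $\mathbb{S}^{1}$, differentiate under the integral sign, and bound each $x$-derivative via $|\partial_{x}^{j}(g_{\psi}-g_{\phi})(x)|\leq \delta \|\partial_{x}^{j}h\|_{\infty}\|\psi-\phi\|_{L^{1}}$. If anything you are more complete than the paper, which proves only the first inequality in detail and dispatches the second (the composition with $T_{0}$) with a ``similarly''; your explicit Fa\`{a} di Bruno step supplies precisely that omitted bookkeeping.
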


\begin{proof}
We have that $\Phi _{\delta ,\psi }(x)=x+\pi (\delta \int_{\mathbb{S}%
^{1}}h(x,y)\psi (y)dy)$ and $\Phi _{\delta ,\phi }(x)=x+\pi (\delta \int_{%
\mathbb{S}^{1}}h(x,y)\phi (y)dy)$, hence when $\delta $ and $||\psi -\phi
||_{L^{1}}$ are small enough%
\begin{eqnarray*}
|\Phi _{\delta ,\psi }(x)-\Phi _{\delta ,\phi }(x)| &=&|x+\pi (\delta \int_{%
\mathbb{S}^{1}}h(x,y)\psi (y)dy)-x+\pi (\delta \int_{\mathbb{S}%
^{1}}h(x,y)\phi (y)dy)| \\
&=&|\delta \int_{\mathbb{S}^{1}}h(x,y)[\psi (y)-\phi (y)]dy|\leq \delta
||h||_{L^{\infty }}||\phi -\psi ||_{L^{1}}.
\end{eqnarray*}

Considering the derivative with respect to $x$ we get $\Phi _{\delta ,\psi
}^{\prime }(x)=1+\pi (\delta \int_{\mathbb{S}^{1}}\frac{\partial h(x,y)}{%
\partial x}\psi (y)dy)$ and similarly for $\Phi _{\delta ,\phi }(x)$. We
have then%
\begin{equation*}
|\Phi _{\delta ,\psi }^{^{\prime }}(x)-\Phi _{\delta ,\phi }^{\prime
}(x)|=|\delta \int_{\mathbb{S}^{1}}\frac{\partial h(x,y)}{\partial x}[\psi
(y)-\phi (y)]dy|
\end{equation*}

and 
\begin{equation*}
|\Phi _{\delta ,\psi }^{^{\prime }}(x)-\Phi _{\delta ,\phi }^{\prime
}(x)|\leq \delta ||\frac{\partial h}{\partial x}||_{L^{\infty }}||\phi -\psi
||_{L^{1}}.
\end{equation*}%
similarly, we get the same estimate for the further derivatives, proving the
statement.
\end{proof}

Now we are ready for the proof of Proposition \ref{existenceexp}.

\begin{proof}[Proof of Proposition \protect\ref{existenceexp}]
Now we consider the first part of the statement and the existence of an
invariant measure in the stronger coupling case. The existence in $L^{1}$\
of a fixed probability measure for $\mathcal{L}_{\delta }$ in this case
follows from Theorem \ref{existence1}, applying it with $B_{s}=BV[\mathbb{S}%
^{1}]$ and $B_{w}=L^{1}[\mathbb{S}^{1}]$ to the family of operators $%
L_{\delta ,\mu }=Q_{\delta ,\mu }\circ L_{T_{0}}$. We now verify that the
required assumptions hold.

The maps $\Phi _{\delta ,\mu }\circ T_{0}$ involved in the system are a
uniform family of expanding maps. By Proposition \ref{LYBV} the operators $%
L_{\delta ,\mu }$ satisfy a common Lasota Yorke inequality on $BV[\mathbb{S}%
^{1}]$ and $L^{1}[\mathbb{S}^{1}]$ and this gives a family of invariant
measures for the operators $L_{\delta ,\mu }$ which is uniformly bounded in
in $BV[\mathbb{S}^{1}]$ hence $(Exi1)$ \ is verified in this case.

We now verify $(Exi2)$ for the $BV$ norm. Let $f\in BV$, consider $%
f_{\epsilon }\in W^{1,1}$ with $||f_{\epsilon }||_{W^{1,1}}\leq
||f||_{BV}+\epsilon $ and $||f_{\epsilon }-f||_{L^{1}}\leq \epsilon .$ 
\begin{eqnarray*}
||(L_{\delta ,\mu _{1}}-L_{\delta ,\mu _{2}})f||_{L^{1}} &=&||(L_{\delta
,\mu _{1}}-L_{\delta ,\mu _{2}})[f-f_{\epsilon }+f_{\epsilon }]||_{L^{1}} \\
&\leq &||(L_{\delta ,\mu _{1}}-L_{\delta ,\mu _{2}})[f-f_{\epsilon
}]||_{L^{1}}+||(L_{\delta ,\mu _{1}}-L_{\delta ,\mu _{2}})f_{\epsilon
}||_{L^{1}} \\
&\leq &2M\epsilon +C||\mu _{1}-\mu _{2}||_{L^{1}}||f_{\epsilon }||_{W^{1,1}}
\\
&\leq &2M\epsilon +C||\mu _{1}-\mu _{2}||_{L^{1}}(||f_{\epsilon
}||_{BV}+\epsilon )
\end{eqnarray*}

and since $\epsilon $ is arbitrary, also $(Exi2)$ is verified in this case.

Let $\mathcal{P}_{n}$ be the partition subdividing the circle into $n$ equal
intervals. We can consider $\pi _{n}:L^{1}(\mathbb{S}^{1})\rightarrow L^{1}(%
\mathbb{S}^{1})$ to be the Ulam discretization defined as $\pi _{n}(f)=%
\mathbf{E}(f|\mathcal{P}_{n})$, where the conditional expectation is made
using the Lebesgue measure, projecting to piecewise constant functions.

For the Ulam projection it is known that $||\pi _{n}f-f||_{L^{1}}\leq \frac{K%
}{n}||f||_{BV}$, $||\pi _{n}||_{L^{1}\rightarrow L^{1}}\leq 1$, $||\pi
_{n}||_{BV\rightarrow BV}\leq 1$ (see \cite{gani} or the proof of \cite[%
Lemma 4.1]{L3} e.g.) and then the discretized operators $\pi _{n}L_{\delta
,\mu }\pi _{n}$ satisfy a uniform Lasota Yorke inequality on $BV$ and $L^{1}$
(see e.g. \cite{Gdisp}, Section 9.3). By this the assumption $Exi1.b$ is
satisfied. We can then apply Theorem \ref{existence1}, and get the existence
of an invariant probability density $f$ in $BV.$ Since $T_{0},h$ $\in C^{6}$
and $L_{\delta ,f}f=f$ we get that $f\in W^{5,1}$ and its norm can be
uniformly estimated by the uniform Lasota Yorke inequality \ on $W^{5,1}$
and $W^{4,1}$ and then on $W^{4,1}$ and $W^{3,1}$ and so on, satisfied
uniformly by all the transfer operators related to the family of maps $%
A_{M,L}.$

For the second part of the statement (the weak coupling case) we apply
Theorem \ref{existence} with $B_{s}=W^{1,1}[\mathbb{S}^{1}]$ and $%
B_{w}=L^{1}[\mathbb{S}^{1}].$

By Lemma \ref{rimonta} and Lemma \ref{Lemsu} when $\delta $ is small enough
all the operators in the family $L_{\delta ,\mu }$ with $\mu \in P_{w}$ are
the transfer operators associated with a uniform family of expanding maps
and satisfy \ a uniform Lasota Yorke inequality on $W^{1,1}$ and $L^{1},$ by
this each one of these operators has a unique invariant probability measure
in $W^{1,1}$ with uniformly bounded norm and $(Exi1)$ is verified.

By Lemma \ref{rimonta} and Proposition \ref{launo} we get 
\begin{equation}
||(L_{\delta ,\mu _{1}}-L_{\delta ,\mu _{2}})f||_{L^{1}}\leq Const||\mu
_{1}-\mu _{2}||_{L^{1}}||f||_{W^{1,1}}  \label{spr}
\end{equation}%
verifying $(Exi2)$ in this case.

By Lemma \ref{lipz}, Lemma \ref{rimonta} \ also $(Exi3)$ \ is verified. Then
we can apply Theorem \ref{existence} \ to get the existence and uniqueness
for small $\delta $.
\end{proof}

The following statement is an estimate for the speed of convergence to
equilibrium of mean field coupled expanding maps (see \cite{K}, Theorem 4 or 
\cite{ST} Theorem 1.1 for similar statements). \ 

\begin{proposition}[Exponential convergence to equilibrium]
\label{convmaps}Let $\mathcal{L}_{\delta }$ be the family of self-consistent
transfer operators arising from $T_{0}\in C^{6}$ and $h\in C^{6}$ as above.
Let $f_{\delta }\in W^{1,1}$ be an invariant probability density of $%
\mathcal{L}_{\delta }.$ Then there exists $\overline{\delta }>0$ \ and $%
C,\gamma \geq 0$ such that for all $0<\delta <\overline{\delta }$, and each
probability density\ $\nu \in W^{1,1}$ we have%
\begin{equation*}
||\mathcal{L}_{\delta }^{n}(\nu )-f_{\delta }||_{W^{1,1}}\leq Ce^{-\gamma
n}||\nu -f_{\delta }||_{W^{1,1}}.
\end{equation*}
\end{proposition}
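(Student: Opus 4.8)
The plan is to verify that the family of self consistent operators $\mathcal{L}_\delta$ arising from coupled expanding maps satisfies the hypotheses of the general Theorem~\ref{expco}, and then simply invoke that theorem. The natural choice of spaces is the Sobolev triple $B_w = W^{1,1}(\mathbb{S}^1)$, $B_s = W^{2,1}(\mathbb{S}^1)$, $B_{ss} = W^{3,1}(\mathbb{S}^1)$, with the convergence-to-equilibrium statement $(\ref{spe})$ then holding in the $W^{1,1}$ norm as claimed. The reason for shifting up one level from the existence proof (which used $L^1$ and $W^{1,1}$) is that Theorem~\ref{expco} requires control in three nested spaces, and the linear response application in the next section also wants regularity in $W^{3,1}$.

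First I would check assumption $(Con1)$, the common one-step Lasota--Yorke inequality. By Lemma~\ref{rimonta}, for $\delta$ small the maps $\Phi_{\delta,\mu}\circ T_0$ form a uniform family of expanding maps $A_{M,L}$, so Lemma~\ref{Lemsu} gives the uniform Lasota--Yorke inequalities on each relevant Sobolev pair with a common contraction rate $\alpha^k<1$; the weak-norm contraction $\|L_{\delta,\mu}f\|_w \le \|f\|_w$ follows from $(\ref{LY13})$. Next, $(Con2)$ is the Lipschitz dependence on $\mu$ in the mixed norms: this is exactly Proposition~\ref{launo} combined with Lemma~\ref{rimonta}, since $\|L_{\delta,\mu_1}-L_{\delta,\mu_2}\|_{B_s\to B_w}$ is bounded via $\|(\Phi_{\delta,\mu_1}\circ T_0)-(\Phi_{\delta,\mu_2}\circ T_0)\|_{C^{k+2}} \le \delta K\|\mu_1-\mu_2\|_{w}$ (applying Proposition~\ref{launo} at the appropriate Sobolev level $k$ for both the $B_s\to B_w$ and $B_{ss}\to B_s$ estimates), and the $\delta=0$ case $(\ref{mm})$ holds since $\Phi_{0,\mu}=\mathrm{Id}$ makes $L_{0,\mu}=L_0$ independent of $\mu$. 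For $(Con3)$, the convergence to equilibrium of the unperturbed operator $L_0=L_{T_0}$ on $V_s$ is precisely the spectral-gap content of Proposition~\ref{propora}. The remaining hypotheses $(\ref{supp1})$ and $(\ref{supp2})$ --- uniform boundedness of $\|L_{\delta,\mu}\|_{B_{ss}\to B_{ss}}$ and of $\|\mu_\delta\|_{ss}$ --- follow from the uniform family structure via Lemma~\ref{Lemsu} and from the second part of Proposition~\ref{existenceexp}, which bounds $\|f_\delta\|_{W^{3,1}}$ uniformly (one needs $f_\delta\in P_{ss}$, which also comes from Proposition~\ref{existenceexp}).

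Once all of $(Con1)$--$(Con3)$, $(\ref{supp1})$, $(\ref{supp2})$ are in place, Theorem~\ref{expco} directly yields the existence of $\overline{\delta}>0$ and constants $C,\gamma\ge 0$ such that $\|\mathcal{L}_\delta^n(\nu)-\mu_\delta\|_s \le Ce^{-\gamma n}\|\nu-\mu_\delta\|_s$ for $\nu\in P_{ss}$. The statement of the proposition is phrased in the $W^{1,1}$ norm with $\nu\in W^{1,1}$, so a small final step is needed: either run the whole argument with the triple shifted down to $(L^1, W^{1,1}, W^{2,1})$ so that $B_s=W^{1,1}$ matches the claimed norm, or observe that since $f_\delta\in W^{3,1}$ the convergence in $B_s$ can be transferred to the stated $W^{1,1}$ estimate by interpolation between the spaces. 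I would choose the triple so that $B_s=W^{1,1}$ exactly, which makes the conclusion literally $(\ref{spe})$.

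The main obstacle I anticipate is bookkeeping rather than conceptual: ensuring that the Sobolev indices line up so that Proposition~\ref{launo} (which loses one derivative, mapping $W^{k,1}\to W^{k-1,1}$) delivers both mixed-norm Lipschitz estimates of $(Con2)$ simultaneously with a single uniform constant $K$, and that the regularity required for $(Con1)$ at the $B_{ss}$ level is covered by Lemma~\ref{Lemsu}, which is stated only up to $k\le 5$ --- here working at $W^{3,1}$ as the strongest space is comfortably within range since $T_0,h\in C^6$. One must also confirm that the invariant density $\mu_\delta=f_\delta$ produced by Proposition~\ref{existenceexp} genuinely lies in $P_{ss}=P_w\cap B_{ss}$ with a $\delta$-uniform $W^{3,1}$ bound, so that $(\ref{supp2})$ applies; this is exactly the content of the second half of Proposition~\ref{existenceexp} together with the remark there that $L_{\delta,f}f=f$ boosts $f$ into $W^{3,1}$. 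No genuinely new estimate is required beyond what the preliminary lemmas already provide.
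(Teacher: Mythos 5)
Your proposal is correct and, in its final form, takes exactly the paper's route: the paper proves this proposition by applying Theorem \ref{expco} with the triple $B_{w}=L^{1}$, $B_{s}=W^{1,1}$, $B_{ss}=W^{2,1}$ (your ultimate choice), checking $(Con1)$ via Lemma \ref{Lemsu}, $(Con2)$ via Lemma \ref{rimonta} together with Proposition \ref{launo}, $(Con3)$ from convergence to equilibrium of the expanding map $T_{0}$, and the uniform bounds $(\ref{supp1})$, $(\ref{supp2})$ from Proposition \ref{existenceexp}. Your instinct to abandon the shifted-up triple $(W^{1,1},W^{2,1},W^{3,1})$ was sound, since the first inequality of $(Con1)$ (weak-norm bound with constant exactly $1$) holds on $L^{1}$ by Markovianity but is not delivered by $(\ref{LY13})$ on $W^{1,1}$, whose constant $A_{k}$ need not be $1$.
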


\begin{proof}
The proof is an application of Theorem \ref{expco}, considering $%
B_{ss}=W^{2,1},$ $B_{s}=W^{1,1},$ $B_{w}=L^{1}$. Let $L_{\delta f}$ be the
family of transfer operators associated with this system.

By the Lasota Yorke inequalities (Lemma \ref{Lemsu}) we have that the
operators $L_{\delta ,\mu }:W^{2,1}\rightarrow W^{2,1},~L_{\delta ,\mu
}:W^{1,1}\rightarrow W^{1,1},$ $L_{\delta ,\mu }:L^{1}\rightarrow L^{1}$
with $\mu \in P_{w}$ are bounded uniformly for $\delta $ small enough. By
Lemma \ref{Lemsu} they satisfy $(Con1).$ Furthermore by Lemma \ref{rimonta}
\ and Proposition \ref{launo} they satisfy $(Con2)$.

By \ Proposition \ref{existenceexp} the invariant measures $f_{\delta }$
satisfy $\lim_{\delta \rightarrow 0}||f_{\delta }||_{W^{2,1}}<+\infty .$
Since the circle expanding map $T_{0}$ has convergence to equilibrium then $%
(Con3)$ is satisfied. \ We can then apply Theorem \ref{expco} directly
implying the statement.
\end{proof}

To get some useful formula for the linear response for expanding maps
coupled in a mean field regime, let us now consider small perturbations of
expanding maps $T:\mathbb{S}^{1}\rightarrow \mathbb{S}^{1}$ by left
composition with a family of diffeomorphisms $(D_{\delta })_{\delta \in
\lbrack -\epsilon ,\epsilon ]}$. \ More precisely, let $D_{\delta }:\mathbb{S%
}^{1}\rightarrow \mathbb{S}^{1}$ be a diffeomorphism, with 
\begin{equation}
D_{\delta }=\pi _{\mathbb{S}^{1}}\circ (Id+\delta S)
\label{def:diffeoperturb}
\end{equation}%
and $S\in C^{6}(\mathbb{S}^{1},\mathbb{R})$. In this setting let us define
the perturbed transfer operators as 
\begin{equation*}
L_{\delta }=L_{D_{\delta }}\circ L_{T}
\end{equation*}%
(remark that here $L_{0}=L_{T}$ ). \ These kinds of perturbations are of the
type induced by a mean field coupling, they satisfy the "small perturbation"
\ and "existence of a derivative operator" assumptions of our general
theorems like $(Con2)$ or $(LR2)$ of Theorem \ref{thm:linresp}. We have
indeed (see \cite[Proposition 35, 36]{GS}):

\begin{proposition}
\label{thm:regdettransferop} Let $(D_{\delta })_{\delta \in \lbrack 0,%
\overline{\delta }]}$ be as in \eqref{def:diffeoperturb}, and $T:\mathbb{S}%
^{1}\rightarrow \mathbb{S}^{1}$ be a $C^{6}$ uniformly expanding map. Let us
define $\dot{L}:W^{4,1}(\mathbb{S}^{1})\rightarrow W^{3,1}(\mathbb{S}^{1})$
by%
\begin{equation}
\dot{L}(f):=-(S\cdot L_{T}(f))^{\prime }.  \label{eq:realdetderivop}
\end{equation}%
Then one has that\ for all $1\leq k\leq 4$ and $f\in W^{k,1}$ 
\begin{equation}
\left\Vert \dfrac{L_{\delta }-L_{0}}{\delta }(f)-\dot{L}(f)\right\Vert
_{W^{k-1,1}}\underset{\delta \rightarrow 0}{\longrightarrow }0.
\end{equation}%
\ 
\end{proposition}

We have now all the ingredients to prove a result regarding the Linear
Response of the coupled system in the small coupling regime.

\begin{proposition}[Linear Response for coupled expanding maps (zero
coupling limit)]
\label{thm:linresp copy(1)} Consider the family of self-consistent transfer
operators $\mathcal{L}_{\delta }$ associated with a $C^{6}$ expanding map $T$
and a coupling driven by the function $\ h$, with $h\in C^{6}$. \ Let $h_{0}$
be the unique invariant probability measure in $L^{1}$ for $\mathcal{L}_{0}$
and $h_{\delta }$ some invariant probability measure for $\mathcal{L}%
_{\delta }$. Then for $\delta \rightarrow 0$ we have the following Linear
Response formula 
\begin{equation}
\lim_{\delta \rightarrow 0}\left\Vert \frac{h_{\delta }-h_{0}}{\delta }%
+(Id-L_{0})^{-1}(h_{0}\int_{S^{1}}h(x,y)h_{0}(y)dy)^{\prime }\right\Vert
_{W^{1,1}}=0.  \label{iii}
\end{equation}
\end{proposition}

\begin{proof}
The proof is a direct application of Theorem \ref{thm:linresp} to our case
with $B_{ss}=W^{3,1}(\mathbb{S}^{1})\subset B_{s}=W^{2,1}(\mathbb{S}%
^{1})\subset B_{w}=W^{1,1}(\mathbb{S}^{1}).$ Let us we see why the
assumptions needed to apply the theorem are satisfied. We recall that the
transfer operators $\mathcal{L}_{\delta }:W^{3,1}(\mathbb{S}^{1})\rightarrow
W^{3,1}(\mathbb{S}^{1})$ involved are defined by%
\begin{equation*}
\mathcal{L}_{\delta }(\phi )=Q_{\delta ,\phi }(L_{T_{0}}(\phi )).
\end{equation*}

The assumption $(SS1)$ (regularity bounds), is implied by Proposition \ref%
{existenceexp}. The assumption $(SS2)$ (convergence to equilibrium for the
unperturbed operator), is well known to be verified, as it stands for the
unperturbed transfer operator $\mathcal{L}_{0}$ \ which is the transfer
operator associated with a smooth expanding map. The assumption $(LR1)$
regarding the existence of the resolvent of the unperturbed operator on the
weak space $W^{1,1}$ follows from Proposition \ref{propora}.

As $\mathcal{L}_{\delta }$ is a small perturbation of $\mathcal{L}_{0}$ \
given by the composition of the transfer operator $Q_{\delta ,\phi }$ \
associated with a diffeomorphism near to the identity, the assumption $(SS3)$
and the first part of $(LR2)$ (small perturbation) follows from Proposition %
\ref{thm:regdettransferop}, Proposition \ref{launo} and Lemma \ref{rimonta}
as before. Let us prove indeed that there is $K\geq 0$ such that and $%
\mathcal{L}_{0}-\mathcal{L}_{\delta }$ is $K\delta $-Lipschitz when
considered as a function $\overline{B}_{2M}\rightarrow B_{w}$ and $%
B_{2M}\rightarrow B_{s}$. In the first case we have to prove that for $\phi
_{1},\phi _{2}\in W^{2,1}$%
\begin{equation}
||[Q_{\delta ,\phi _{1}}(L_{T_{0}}(\phi _{1}))-L_{T_{0}}(\phi
_{1})]-[Q_{\delta ,\phi _{2}}(L_{T_{0}}(\phi _{2}))-L_{T_{0}}(\phi
_{2})]||_{W^{1,1}}\leq K\delta ||\phi _{1}-\phi _{2}||_{W^{2,1}}.
\label{similarr}
\end{equation}

Developing the formula we get%
\begin{eqnarray*}
&&||Q_{\delta ,\phi _{1}}(L_{T_{0}}(\phi _{1}))-L_{T_{0}}(\phi
_{1})-Q_{\delta ,\phi _{2}}(L_{T_{0}}(\phi _{2}))+L_{T_{0}}(\phi
_{2})||_{W^{1,1}} \\
&\leq &||Q_{\delta ,\phi _{1}}(L_{T_{0}}(\phi _{1}))-L_{T_{0}}(\phi
_{1})-Q_{\delta ,\phi _{1}}(L_{T_{0}}(\phi _{2})) \\
&&+Q_{\delta ,\phi _{1}}(L_{T_{0}}(\phi _{2}))-Q_{\delta ,\phi
_{2}}(L_{T_{0}}(\phi _{2}))+L_{T_{0}}(\phi _{2})||_{W^{1,1}}
\end{eqnarray*}

and%
\begin{eqnarray*}
&&||Q_{\delta ,\phi _{1}}(L_{T_{0}}(\phi _{1}))-L_{T_{0}}(\phi
_{1})-Q_{\delta ,\phi _{1}}(L_{T_{0}}(\phi _{2}))+L_{T_{0}}(\phi
_{2})||_{W^{1,1}} \\
&\leq &||Q_{\delta ,\phi _{1}}(L_{T_{0}}(\phi _{1}-\phi
_{2}))-L_{T_{0}}(\phi _{1}-\phi _{2})||_{W^{1,1}} \\
&\leq &CK_{2}2M\delta ||\phi _{1}-\phi _{2}||_{W^{2,1}}
\end{eqnarray*}

by applying Lemma\ \ref{rimonta} with $\psi =0$ and $\phi =\phi _{1}$\ and
Proposition \ref{launo}. The other term can be estimated as%
\begin{eqnarray*}
||Q_{\delta ,\phi _{1}}(L_{T_{0}}(\phi _{2}))-Q_{\delta ,\phi
_{2}}(L_{T_{0}}(\phi _{2}))||_{W^{11}} &\leq &||Q_{\delta ,\phi _{1}}\circ
L_{T_{0}}-Q_{\delta ,\phi _{2}}\circ L_{T_{0}}||_{W^{2,1}\rightarrow
W^{1,1}}||\phi _{2}||_{W^{2,1}} \\
&\leq &\delta CK_{2}2M||\phi _{1}-\phi _{2}||_{W^{2,1}}
\end{eqnarray*}%
using Proposition \ref{launo} and Lemma \ref{rimonta} \ and proving the
Lipschitz assumption in the $\ \overline{B}_{2M}\rightarrow B_{w}$ case. The
case $B_{2M}\rightarrow B_{s}$ is similar.

The assumption $(LR2)$ on the derivative operator follows from Proposition %
\ref{thm:regdettransferop}. Let us apply it and find an expression for $%
\mathcal{\dot{L}}{h_{0}}$ in our case. In this case the perturbing operator
to be considered is $Q_{\delta ,h_{0}}$ associated with the diffeomorphism $%
\Phi _{\delta ,h_{0}}.$ With the notation $(\ref{def:diffeoperturb})$ \ we
have $D_{\delta }=\Phi _{\delta ,h_{0}}=Id+\delta S$ with 
\begin{equation*}
S(x)=\int_{S^{1}}h(x,y)h_{0}(y)dy
\end{equation*}%
and then%
\begin{equation}
\mathcal{\dot{L}}(h_{0})=-(S~L_{T_{0}}h_{0})^{\prime
}=-(h_{0}\int_{S^{1}}h(x,y)h_{0}(y)dy)^{\prime }.  \label{oooo}
\end{equation}

Applying Theorem \ref{thm:linresp}, we then get 
\begin{equation}
\lim_{\delta \rightarrow 0}\left\Vert \frac{h_{\delta }-h_{0}}{\delta }%
-(Id-L_{T_{0}})^{-1}\mathcal{\dot{L}}(h_{0})\right\Vert _{W^{1,1}}=0
\label{ee}
\end{equation}%
as in our case $W^{1,1}$ is the weak space \ Substituting $(\ref{oooo})$ in $%
(\ref{ee})$ we get $(\ref{iii})$.
\end{proof}

\begin{remark}
\label{dalla}From $(\ref{LRidea2})$ we see that this response result with
convergence in the quite strong topology $W^{1,1}$ gives information on the
behavior of a large class of observables, for example we can consider
observables in $L^{\infty }$ or $L^{1}$ or even distributions in the dual of 
$W^{1,1}$.
\end{remark}

\section{Maps with additive noise on $\mathbb{S}^{1}\label{noise}$}

We illustrate the flexibility of our approach with an application to systems
of coupled random maps. For simplicity we will consider a class of random
dynamical systems on the unit circle $\mathbb{S}^{1}$. Informally speaking,
a random dynamics on $\mathbb{S}^{1}$ is defined by the iteration of maps
chosen randomly in the family $T_{\omega }:\mathbb{S}^{1}\rightarrow \mathbb{%
S}^{1}$, $\omega \in \Omega $ according to a certain probability
distribution $p$ defined on $\Omega $. \ In our case we will model this
random choice as independent and identically distributed at each time.

Let $T_{0}$:$~\mathbb{S}^{1}\rightarrow \mathbb{S}^{1}$ a continuous and
piecewise $C^{1},$ nonsingular map\footnote{%
We mean that $\mathbb{S}^{1}$ can be partitioned in a finite set of
intevrals where $T$ is $C^{1}$ \ and that the associated pushforward map $%
T_{\ast }$ sends a measure which is absolutely continuous with respect to
the Lebesgue measure to another measure which is absolutely continuous with
respect to the Lebesgue measure.}. We consider a random dynamical system,
corresponding to the stochastic process $(X_{n})_{n\in \mathbb{N}}$ defined
by%
\begin{equation}
X_{n+1}=T_{0}(X_{n})+\Omega _{n}\mod 1  \label{eq:syswaddnoise}
\end{equation}%
where $(\Omega _{n})_{n\in \mathbb{N}}$ are i.i.d random variables
distributed according to some smooth kernel $\rho .$ We will call $T_{0}$
the deterministic part of the system and $\rho $ the noise kernel of the
system.

\begin{remark}
We remark that the maps considered here are quite general. We do not require
expansiveness or hyperbolicity, allowing many examples of random maps coming
as models of natural phenomena (see e.g. \cite{GMN}, \cite{tak}, \cite{jst}).
\end{remark}

We will consider the annealed transfer operators associated with these
systems (see \cite{Viana}, Section 5 for more details about transfer
operators associated with this kind of systems or \cite{GS}). Let $SM(%
\mathbb{S}^{1})$ be the space of signed Borel measures in $\mathbb{S}^{1}$.
The annealed transfer operator $L:SM(\mathbb{S}^{1})\rightarrow SM(\mathbb{S}%
^{1})$ associated with the random system is defined by%
\begin{equation}
L(\mu )=\int_{\Omega }L_{T_{\omega }}(\mu )dp
\end{equation}%
where $L_{T_{\omega }}(\mu ):SM(\mathbb{S}^{1})\rightarrow SM(\mathbb{S}%
^{1}) $ is the transfer operator associated with $T_{\omega }$, hence taking
the average of the pushforward maps with respect to $p$. For some class of
random dynamical systems $L$ is defined as an operator $:L^{1}(\mathbb{S}%
^{1})\rightarrow L^{1}(\mathbb{S}^{1})$ and sometime this operator is a 
\emph{kernel} operator: let $k\in L^{\infty }(\mathbb{S}^{1}\times \mathbb{S}%
^{1})$ (the kernel), consider the operator $L$ defined in the following way:
for $f\in L^{1}(\mathbb{S}^{1})$%
\begin{equation}
Lf(x)=\int_{\mathbb{S}^{1}}k(x,y)f(y)dy.  \label{kernelL1}
\end{equation}

This kind of operators naturally appear when the random dynamics is defined
by the action of a deterministic map and some additive noise \ Since the
effect of the additive noise is to perturb the deterministic map by a
translation, the annealed transfer operator will be an average of
translations, i.e. a convolution. \ The well known regularization properties
of convolutions then imply that the annealed transfer operator associated
with a system with additive noise is a regularizing one.

Let us consider a probability density $\rho :$ $\mathbb{R}\rightarrow 
\mathbb{R}$ representing how the noise is distributed. For simplicity we
will suppose $\rho $ being such that $\rho (x)=\rho (-x)$ for all $x\in 
\mathbb{R}$ and being a Schwartz function, hence $\rho \in C^{k}$ for all $%
k\geq 1$. The periodization $\tilde{\rho}:\mathbb{S}^{1}\rightarrow \mathbb{R%
}$ of such a function is defined as 
\begin{equation*}
\tilde{\rho}(x)=\sum_{k\in \mathbb{Z}}\rho (x+k)
\end{equation*}%
which clearly converge for a rapidly decreasing function as $\rho $.

\begin{definition}
Let $f\in L^{1}(\mathbb{S}^{1})$ and $\rho $ as before. We define the
convolution $\rho \ast f$ by 
\begin{equation}
\rho \ast f(x):=\int_{\mathbb{S}^{1}}\tilde{\rho}(x-y)f(y)dy.
\label{def:convprod}
\end{equation}
\end{definition}

To a system with additive noise as defined in $(\ref{eq:syswaddnoise})$ we
then associate the annealed transfer operator $L:L^{1}\rightarrow L^{1}$
defined by 
\begin{equation}
L(\phi ):=\rho \ast \lbrack L_{T_{0}}(\phi )]  \label{def:annealedtransferop}
\end{equation}%
for all $\phi \in L^{1},$ where $L_{T_{0}}$ is the transfer operator
associated with the deterministic map $T_{0}$.

We now define the self-consistent transfer operator associated with an
infinite collection of interacting random maps in a mean field coupling.
Like in the deterministic case let us consider $h\in C^{k}(\mathbb{S}%
^{1}\times \mathbb{S}^{1},\mathbb{R}),$ $\delta \geq 0$, for some $k\geq 1$
and a probability density $\psi \in L^{1}$. Define \ $\Phi _{\delta ,\psi }:%
\mathbb{S}^{1}\rightarrow \mathbb{S}^{1}$ again as 
\begin{equation*}
\Phi _{\delta ,\psi }(x)=x+\pi _{\mathbb{S}^{1}}(\delta \int_{\mathbb{S}%
^{1}}h(x,y)\psi (y)dy).
\end{equation*}

Denote by $Q_{\delta ,\psi }$ the transfer operator associated with $\Phi
_{\delta ,\psi },$ as in $(\ref{wex})$. \ We consider the following family
of operators depending on a probability density $\phi \in L^{1}$ and $\delta
\geq 0$ defined as%
\begin{equation}
L_{\delta ,\phi }(\psi )=\rho \ast \lbrack Q_{\delta ,\phi }(L_{T_{0}}(\psi
))].  \label{noisefam}
\end{equation}%
\ Finally we define the nonlinear self-consistent transfer operator
associated with this system of coupled random maps by 
\begin{equation}
\mathcal{L}_{\delta }(\phi )=\rho \ast \lbrack Q_{\delta ,\phi
}(L_{T_{0}}(\phi ))].  \label{opnoise}
\end{equation}

This represents the idea that a certain initial condition is first moved by
the deterministic part of the dynamics represented by the map $T_{0}$ and by
the mean field perturbation $\Phi _{\delta ,\phi }$, then a \ further
(external and independent of the dynamics) random perturbation is applied by
the noise. In the remaining part of the section we will apply our general
theory to this kind of operators.

The following proposition contains some of the regularization properties for
the convolution we need (see \cite{GS} Proposition 15 for the proof and
details).

\begin{proposition}
Let $f\in L^{1}$ and $\rho $ be as before. The convolution $\rho \ast f$ has
the following properties:

\begin{enumerate}
\item For all $k\geq 1$, $\rho \ast f:\mathbb{S}^{1}\rightarrow \mathbb{R}$
is $C^{k}$, and $(\rho \ast f)^{(i)}=\rho ^{(i)}\ast f$ \ for any $i\leq k$.

\item One has the following regularization inequality:%
\begin{equation}
\Vert \rho \ast f\Vert _{C^{k}}\leq \Vert \rho \Vert _{C^{k}}\Vert f\Vert
_{L^{1}}.  \label{eq:regineqI}
\end{equation}
\end{enumerate}
\end{proposition}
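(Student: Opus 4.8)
The plan is to prove both items by reducing everything to properties of the periodized kernel $\tilde{\rho}$ together with the classical theorem on differentiation under the integral sign. First I would record that, since $\rho$ is a Schwartz (rapidly decreasing) function, for every $i\geq 0$ the series $\sum_{k\in\mathbb{Z}}\rho^{(i)}(x+k)$ converges uniformly on $\mathbb{S}^{1}$. Consequently $\tilde{\rho}\in C^{\infty}(\mathbb{S}^{1})$ and term-by-term differentiation is justified, giving $\tilde{\rho}^{(i)}=\widetilde{\rho^{(i)}}$, i.e. the $i$-th derivative of the periodization equals the periodization of the $i$-th derivative. In particular each $\tilde{\rho}^{(i)}$ is continuous on the compact space $\mathbb{S}^{1}$, hence bounded.

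For item (1) I would fix $i\leq k$ and consider the integrand $(x,y)\mapsto\tilde{\rho}^{(i)}(x-y)f(y)$. For each fixed $x$ this is integrable, since $|\tilde{\rho}^{(i)}(x-y)f(y)|\leq\|\tilde{\rho}^{(i)}\|_{\infty}|f(y)|$ and $f\in L^{1}$; moreover the same bound, being independent of $x$, serves as an integrable dominating function, so the hypotheses of the theorem on differentiation under the integral sign are satisfied. Differentiating $\rho\ast f(x)=\int_{\mathbb{S}^{1}}\tilde{\rho}(x-y)f(y)\,dy$ repeatedly in $x$ then yields
\[
(\rho\ast f)^{(i)}(x)=\int_{\mathbb{S}^{1}}\tilde{\rho}^{(i)}(x-y)f(y)\,dy=\rho^{(i)}\ast f(x),
\]
where the last identity uses $\tilde{\rho}^{(i)}=\widetilde{\rho^{(i)}}$ and the definition of the convolution. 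Continuity of the right-hand side in $x$ (again by dominated convergence) shows $\rho\ast f\in C^{k}$.

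For item (2) I would estimate, for each $0\leq i\leq k$ and each $x\in\mathbb{S}^{1}$,
\[
|(\rho\ast f)^{(i)}(x)|=\left|\int_{\mathbb{S}^{1}}\tilde{\rho}^{(i)}(x-y)f(y)\,dy\right|\leq\|\tilde{\rho}^{(i)}\|_{\infty}\,\|f\|_{L^{1}}.
\]
Taking the supremum over $x$ and then the maximum over $0\leq i\leq k$ gives $\|\rho\ast f\|_{C^{k}}\leq\|\rho\|_{C^{k}}\|f\|_{L^{1}}$, where $\|\rho\|_{C^{k}}$ is read as the natural $C^{k}$-norm $\max_{i\leq k}\|\tilde{\rho}^{(i)}\|_{\infty}$ of the kernel on the circle, consistent with the fact that the convolution itself is built from $\tilde{\rho}$.

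The only genuinely delicate point, and hence the main obstacle, is the justification of differentiation under the integral sign: one must confirm that the periodized series defining $\tilde{\rho}$ can be differentiated term by term to all orders and that a single integrable dominating function works uniformly in $x$. Both facts rest crucially on the standing assumption that $\rho$ is Schwartz, which guarantees the uniform convergence of $\sum_{k}\rho^{(i)}(x+k)$ for every $i$. Once this regularity of $\tilde{\rho}$ is in hand, the remaining manipulations are routine applications of dominated convergence and the triangle inequality for integrals.
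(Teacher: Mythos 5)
Your proof is correct. Note, however, that the paper itself does not prove this proposition at all: it is quoted from the literature, with the proof delegated to \cite{GS} (Proposition 15 there), so there is no internal argument to compare yours against. Your route is the standard, expected one: uniform convergence of $\sum_{k}\rho^{(i)}(x+k)$ (from the Schwartz decay) gives $\tilde{\rho}\in C^{\infty}(\mathbb{S}^{1})$ with $\tilde{\rho}^{(i)}=\widetilde{\rho^{(i)}}$, then differentiation under the integral sign (dominated by $\Vert \tilde{\rho}^{(i)}\Vert _{\infty }|f(y)|$) yields item (1), and the pointwise bound plus a supremum gives item (2). One point you handle well and which deserves emphasis: the inequality as written is only true if $\Vert \rho \Vert _{C^{k}}$ is interpreted as the $C^{k}$ norm of the periodized kernel $\tilde{\rho}$ on the circle (or as some constant dominating it), since periodization can increase sup-norms; the inequality with the $C^{k}(\mathbb{R})$ norm of $\rho$ itself can fail. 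Your explicit remark on this reading resolves a genuine notational ambiguity in the paper's statement, where this constant is used later (e.g.\ in the Lasota--Yorke-type estimate for $L_{\delta ,\phi }$ and in the bound $\Vert f_{\delta }\Vert _{C^{k}}\leq \Vert \rho \Vert _{C^{k}}$) without the distinction being made.
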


These regularization properties together with the Ascoli Arzela theory imply
that a linear operator $L_{0}:L^{1}\rightarrow L^{1}$ of the kind%
\begin{equation}
L_{0}(\phi )=\rho \ast L_{T_{0}}(\phi )  \label{supra}
\end{equation}%
is a compact operator. If we suppose that the system satisfy a convergence
to equilibrium assumption, this will allow to obtain the spectral gap and
the existence of the resolvent operators, required to apply Theorem \ref%
{thm:linresp}.

\begin{proposition}
\label{uuuu}Let $L_{0}$ be the annealed transfer operator associated with a
map $T_{0}$ with additive noise distributed with a kernel $\rho $ as in $(%
\ref{supra}).$ Consider $m\geq 1$ and suppose that for all $g\in C^{m}$ \
such that $\int g~dm=0$%
\begin{equation}
\lim_{n\rightarrow \infty }\Vert L_{0}^{n}g\Vert _{L^{1}}=0.  \label{coneq}
\end{equation}
Then for all $k\geq 1,$ $(Id-L_{0})^{-1}:=\sum_{i=0}^{\infty }L_{0}^{i}$ is
a bounded operator $C^{k}\rightarrow C^{k}$.
\end{proposition}

\begin{proof}
$(\ref{coneq})$ and $(\ref{eq:regineqI})$ impliy that $\lim_{n\rightarrow
\infty }\Vert L_{0}^{n}g\Vert _{C^{k}}=0$ \ for all $g\in L^{1}$ \ such that 
$\int g~dm=0$ and then some iterate of the transfer operator is a uniform
contraction on the space of $C^{k}$ densities with zero average. By this the
operator has a spectral gap, implying the existence of the resolvent
operator (see for the details \cite{GS}, section IV).
\end{proof}

By \cite[proposition 18 and 19]{GS} and their simple proof the lemma below
directly follows

\begin{lemma}
\label{laquattro}Let us consider transfer operators $L_{0},$ $L_{1}$
associated with dynamical systems with additive noise having noise kernel $%
\rho $, deterministic part given by continuous maps $T_{0}$ and $T_{1}$ and $%
k\geq 0.$ Then there is $C\geq 0$ such that for all such $T_{0},$ $T_{1}$
and $f\in L^{1}$ 
\begin{equation*}
||L_{1}f-L_{0}f||_{C^{k-1}}\leq C||\rho
||_{C^{k}}||T_{0}-T_{1}||_{C^{0}}||f||_{1}.
\end{equation*}
\end{lemma}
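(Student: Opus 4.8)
The plan is to exploit the convolution structure of the annealed operators to rewrite the difference $L_1 f - L_0 f$ as an integral of shifted copies of the smooth kernel $\tilde{\rho}$, thereby converting the $C^0$-distance between the maps into a pointwise displacement of the kernel. First I would record the integral representation
\[
[L_i f](x) = \int_{\mathbb{S}^1} \tilde{\rho}(x - T_i(y))\, f(y)\, dy, \qquad i\in\{0,1\},
\]
which follows directly from the definition of the annealed operator $L_i f = \rho * L_{T_i} f$ in \eqref{def:annealedtransferop} together with the definition of the convolution \eqref{def:convprod}: indeed $L_{T_i} f$ is the pushforward of the measure $f\,dm$ under $T_i$, and convolving a pushforward measure with $\tilde{\rho}$ produces exactly $\int \tilde{\rho}(x - T_i(y)) f(y)\,dy$. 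I emphasize that this representation requires neither that $T_i$ send densities to densities nor any expansion or invertibility, since the convolution absorbs the (possibly singular) pushforward; this is precisely why the statement holds for the general continuous, piecewise $C^1$ maps considered in this section.

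Subtracting the two representations gives
\[
[L_1 f - L_0 f](x) = \int_{\mathbb{S}^1} \bigl[\tilde{\rho}(x - T_1(y)) - \tilde{\rho}(x - T_0(y))\bigr]\, f(y)\, dy.
\]
Next I would differentiate under the integral sign up to order $k-1$ in $x$; this is justified because $\rho$ is a Schwartz function, so its periodization $\tilde{\rho}$ is $C^\infty$ with bounded derivatives of every order, while $f\in L^1$, so the integrands and their $x$-derivatives are uniformly dominated. For $0\le j\le k-1$ this yields
\[
\frac{d^j}{dx^j}[L_1 f - L_0 f](x) = \int_{\mathbb{S}^1} \bigl[\tilde{\rho}^{(j)}(x - T_1(y)) - \tilde{\rho}^{(j)}(x - T_0(y))\bigr]\, f(y)\, dy.
\]

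The key step is then a mean value estimate on each derivative of the kernel: since $\tilde{\rho}^{(j)}$ is $C^1$ with derivative $\tilde{\rho}^{(j+1)}$ and $j+1\le k$,
\[
\bigl|\tilde{\rho}^{(j)}(x - T_1(y)) - \tilde{\rho}^{(j)}(x - T_0(y))\bigr| \le \|\tilde{\rho}^{(j+1)}\|_{\infty}\, |T_1(y) - T_0(y)| \le \|\tilde{\rho}^{(j+1)}\|_{\infty}\, \|T_0 - T_1\|_{C^0}.
\]
Inserting this bound and using $\int_{\mathbb{S}^1} |f(y)|\,dy = \|f\|_{1}$ gives, for each $j\le k-1$,
\[
\Bigl\| \tfrac{d^j}{dx^j}(L_1 f - L_0 f)\Bigr\|_{\infty} \le \|\tilde{\rho}^{(j+1)}\|_{\infty}\, \|T_0 - T_1\|_{C^0}\, \|f\|_{1}.
\]
Taking the supremum over $0\le j\le k-1$ and bounding $\sup_{j\le k-1}\|\tilde{\rho}^{(j+1)}\|_\infty$ by $C\|\rho\|_{C^k}$ (in the convention of \eqref{eq:regineqI}, where the $C^k$-norm of the kernel controls the sup-norms of the periodized derivatives up to order $k$) yields the claimed inequality in the $C^{k-1}$ norm.

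I do not expect a serious obstacle here: the only points requiring care are the justification of differentiation under the integral, which is immediate from the smoothness and rapid decay of $\rho$, and the bookkeeping relating $\|\tilde{\rho}^{(j+1)}\|_\infty$ to $\|\rho\|_{C^k}$. The conceptual content lies entirely in the integral representation, which turns the perturbation of the deterministic map into a controllable shift of a smooth convolution kernel.
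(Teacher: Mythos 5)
Your proof is correct, and it is worth noting that the paper itself contains no proof of this lemma: it is asserted to follow ``by \cite[proposition 18 and 19]{GS} and their simple proof''. Your argument is therefore a self-contained replacement for that citation. The mechanism is the same one underlying the cited propositions, but the organization differs. The route in \cite{GS} is modular: one first shows that the pushforward operators alone satisfy a weak-norm estimate of the form
\begin{equation*}
\Vert L_{T_1}f-L_{T_0}f\Vert _{W}\leq \Vert T_{1}-T_{0}\Vert _{C^{0}}\Vert
f\Vert _{L^{1}},
\end{equation*}
where $\Vert \cdot \Vert _{W}$ is a dual-Lipschitz (Wasserstein-type) norm --- this is exactly your mean value step, applied to an arbitrary $1$-Lipschitz test function instead of to the kernel itself --- and then one uses the weak-to-strong regularization of the convolution (the analogue of \eqref{eq:regineqI} with the weak norm on the right) to upgrade this to the $C^{k-1}$ bound at the price of $\Vert \rho \Vert _{C^{k}}$. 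You collapse these two steps into a single computation on the explicit kernel $\tilde{\rho}(x-T_{i}(y))$, obtained from \eqref{def:annealedtransferop}, \eqref{def:convprod} and the duality $\int g\,(L_{T_{i}}f)\,dm=\int g\circ T_{i}\,f\,dm$. What the two-step version buys is reusability (the weak-norm estimate and the regularization inequality serve elsewhere in \cite{GS}); what your version buys is that the whole lemma is verified in a few lines without introducing an auxiliary norm. Two cosmetic points: your bound $\sup_{j\leq k-1}\Vert \tilde{\rho}^{(j+1)}\Vert _{\infty }\leq C\Vert \rho \Vert _{C^{k}}$ silently absorbs the periodization sum $\sum_{m\in \mathbb{Z}}$, which is finite for Schwartz $\rho $ and is exactly the convention the paper already uses in \eqref{eq:regineqI}, so this is consistent; and your proof, like the statement itself, only makes sense for $k\geq 1$ (the case $k=0$ in the statement is vacuous as written), which is a defect of the lemma's formulation rather than of your argument.
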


We state a result analogous to Proposition \ref{existenceexp} in the case of
systems with additive noise. The application to this case is simpler due to
the regularizing effect of the noise.

\begin{proposition}
\label{existence copy(1)}Suppose $T_{0}$ is $~\mathbb{S}^{1}\rightarrow 
\mathbb{S}^{1}$ continuous, nonsingular and piecewise $C^{1}.$ Suppose $h\in
C^{1}$, let $\mathcal{L}_{\delta }$ be the self-consistent family of
operators associated with this coupled system as defined in $(\ref{opnoise}%
), $ then for all $\delta \geq 0$ there is \ $f_{\delta }\in C^{\infty }$
such that for all $k\geq 1$ 
\begin{equation}
||f_{\delta }||_{C^{k}}\leq ||\rho ||_{C^{k}}  \label{1234}
\end{equation}%
and%
\begin{equation*}
\mathcal{L}_{\delta }(f_{\delta })=f_{\delta }.
\end{equation*}%
Let us suppose that the (linear) operator $\mathcal{L}_{0}$ has convergence
to equilibrium when considered as acting on the spaces $C^{1}$ and $L^{1}$
(see $(\ref{coneq})$) then there is $\overline{\delta }>0$ such that for all 
$\delta \in \lbrack 0,\overline{\delta }],$ \ $f_{\delta }$ is unique.
\end{proposition}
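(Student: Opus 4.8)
The plan is to read off existence and the $C^{k}$ bounds from Theorem \ref{existence1}, exactly as in the second part of Proposition \ref{existenceexp}, and then to obtain uniqueness for small $\delta$ from Theorem \ref{existence}; the whole argument is made easy by the smoothing inequality $\|\rho\ast g\|_{C^{k}}\leq \|\rho\|_{C^{k}}\|g\|_{L^{1}}$ of \eqref{eq:regineqI}, which forces every fixed density to be a convolution with $\rho$ of a probability density.

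For existence I would apply Theorem \ref{existence1} with $B_{w}=L^{1}(\mathbb{S}^{1})$, $B_{s}=BV(\mathbb{S}^{1})$, and $\pi_{n}$ the Ulam projection onto piecewise constant densities, so that \eqref{chio1}, \eqref{chio} and the requirements $\pi_{n}(P_{w})\subseteq P_{s}$, $\pi_{n}(P_{w})$ bounded in $B_{s}$ hold as in Proposition \ref{existenceexp}. The key observation is that each member of the family factors as $L_{\delta,\mu}=\rho\ast L_{\Phi_{\delta,\mu}\circ T_{0}}$, i.e. it is an annealed operator of the form $\rho\ast L_{S}$. Hence any fixed probability density $f$ of $L_{\delta,\mu_{1}}$ equals $\rho\ast(L_{\Phi_{\delta,\mu_{1}}\circ T_{0}}f)$ with $L_{\Phi_{\delta,\mu_{1}}\circ T_{0}}f$ a probability density, so $\|f\|_{BV}\leq C\|f\|_{C^{1}}\leq C\|\rho\|_{C^{1}}=:M_{1}$, giving $(Exi1)$; the same computation applied to $\pi_{n}L_{\delta,\pi_{n}\mu_{1}}\pi_{n}$, using $\|\pi_{n}g\|_{BV}\leq\|g\|_{BV}$ for the Ulam projection, gives $(Exi1.b)$ with the same $M_{1}$. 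Assumption $(Exi2)$ follows from Lemma \ref{laquattro} combined with Lemma \ref{rimonta}:
\[
\|(L_{\delta,\mu_{1}}-L_{\delta,\mu_{2}})f\|_{L^{1}}\leq C\|\rho\|_{C^{1}}\|\Phi_{\delta,\mu_{1}}\circ T_{0}-\Phi_{\delta,\mu_{2}}\circ T_{0}\|_{C^{0}}\|f\|_{L^{1}}\leq \delta K\|\mu_{1}-\mu_{2}\|_{L^{1}}\|f\|_{L^{1}}.
\]
Theorem \ref{existence1} then produces a fixed probability density $f_{\delta}\in BV$, and since $f_{\delta}=\mathcal{L}_{\delta}(f_{\delta})=\rho\ast(Q_{\delta,f_{\delta}}L_{T_{0}}f_{\delta})$ is a convolution of the Schwartz kernel $\rho$ with a probability density, a one-line bootstrap via \eqref{eq:regineqI} yields $f_{\delta}\in C^{\infty}$ and $\|f_{\delta}\|_{C^{k}}\leq\|\rho\|_{C^{k}}$ for every $k\geq1$, uniformly in $\delta$; this proves the first part.

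For uniqueness I would apply Theorem \ref{existence} with $B_{w}=L^{1}$, where $(Exi1)$ and $(Exi2)$ are already in hand; the remaining work is to show that for $\delta$ small and every $\mu\in P_{w}$ with $\|\mu\|_{L^{1}}\leq M_{1}$ the linear operator $L_{\delta,\mu}$ has a unique fixed probability density $f_{\mu}$ and that $(Exi3)$ holds. Both reduce to a uniform spectral gap for the family. Here I would use that $\mathcal{L}_{0}=L_{0}=\rho\ast L_{T_{0}}$ has convergence to equilibrium \eqref{coneq} by hypothesis, hence a bounded resolvent on the zero-average space by Corollary \ref{uuuu}; that all the $L_{\delta,\mu}$ share the strong smoothing bound $\|L_{\delta,\mu}f\|_{C^{1}}\leq\|\rho\|_{C^{1}}\|f\|_{L^{1}}$; and that $\|L_{\delta,\mu}-L_{0}\|_{L^{1}\to C^{0}}\leq C\delta$ uniformly in $\mu$, again by Lemma \ref{laquattro} together with $\|\Phi_{\delta,\mu}\circ T_{0}-T_{0}\|_{C^{0}}\leq C\delta$. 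A perturbation argument (Keller--Liverani stability, or internally Theorem \ref{prop1} applied to the constant composition $L(n)=L_{\delta,\mu}^{\,n}$, whose hypotheses $(Con1)$--$(Con3)$ hold with $\lambda_{1}=0$, $B=\|\rho\|_{C^{1}}$, the $a_{n}$ coming from convergence to equilibrium of $L_{0}$ and $K$ from $(Exi2)$) then gives, for $\delta\leq\bar{\delta}$ and uniformly in $\mu$, the unique fixed density $f_{\mu}$ and a resolvent $(Id-L_{\delta,\mu})^{-1}$ bounded uniformly in $\mu$ on zero-average measures. Then $(Exi3)$ follows from $(Id-L_{\delta,\mu_{1}})(f_{\mu_{1}}-f_{\mu_{2}})=(L_{\delta,\mu_{1}}-L_{\delta,\mu_{2}})f_{\mu_{2}}$ and $(Exi2)$:
\[
\|f_{\mu_{1}}-f_{\mu_{2}}\|_{L^{1}}\leq \|(Id-L_{\delta,\mu_{1}})^{-1}\|\,\|(L_{\delta,\mu_{1}}-L_{\delta,\mu_{2}})f_{\mu_{2}}\|_{L^{1}}\leq \delta K_{2}\|\mu_{1}-\mu_{2}\|_{L^{1}},
\]
and Theorem \ref{existence} gives a unique fixed probability density of $\mathcal{L}_{\delta}$ for $\delta\leq\min(\bar{\delta},1/K_{2})$.

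The step I expect to be the main obstacle is obtaining the uniform-in-$\mu$ spectral gap for the perturbed linear operators from the single convergence-to-equilibrium assumption on $L_{0}$: one must ensure that the resolvent bound, the contraction rate and the constants degrade controllably as $\mu$ ranges over $P_{w}$ and $\delta\to0$. The strong, $\mu$- and $\delta$-independent smoothing $\|L_{\delta,\mu}f\|_{C^{1}}\leq\|\rho\|_{C^{1}}\|f\|_{L^{1}}$ is precisely what makes this uniformity available, since it plays the role of a common one-step Lasota--Yorke inequality with zero coefficient on the strong norm and reduces the perturbation to a genuinely small one in the mixed $L^{1}\to C^{0}$ norm.
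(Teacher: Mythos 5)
Your proposal is correct and follows essentially the same route as the paper's own proof: Theorem \ref{existence1} with $B_{s}=BV$, $B_{w}=L^{1}$ and the Ulam projection for existence (verifying $(Exi1)$, $(Exi1.b)$, $(Exi2)$ via the regularization inequality \eqref{eq:regineqI} together with Lemmas \ref{rimonta} and \ref{laquattro}), the convolution bootstrap to get $f_{\delta}\in C^{\infty}$ with the bound \eqref{1234}, and then Theorem \ref{existence} with $B_{s}=C^{1}$, $B_{w}=L^{1}$ for uniqueness, where $(Exi3)$ comes from the resolvent identity exactly as in the paper. The only cosmetic difference is that you package the uniform-in-$\mu$ contraction through Theorem \ref{prop1} applied to the constant composition $L_{\delta ,\mu }^{n}$ (or Keller--Liverani), whereas the paper runs the underlying estimate of Lemma \ref{XXX} directly; since Theorem \ref{prop1} is itself built on that lemma, this is the same mechanism.
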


\begin{proof}
We sketch the proof, whose arguments are similar to the ones of Proposition %
\ref{existenceexp}. \ We will obtain the statement applying Theorems \ref%
{existence1} and \ref{existence} to the family of operators%
\begin{equation*}
L_{\delta ,\phi }=\rho \ast \lbrack Q_{\delta ,\phi }\circ L_{T_{0}}]
\end{equation*}%
as defined in $(\ref{noisefam})$. \ First we will apply Theorem \ref%
{existence1} with $B_{s}=BV$ and $B_{w}=L^{1}$. We remark that given $k\geq
1,$ $(\ref{eq:regineqI})$ implies a Lasota Yorke inequality which is
uniformly satisfied by these operators, indeed%
\begin{eqnarray}
||L_{\delta ,\phi }(\psi )||_{C^{k}} &=&||\rho \ast \lbrack Q_{\delta ,\phi
}\circ L_{T_{0}}(\psi )]||_{C^{k}}  \label{LYnoise} \\
&\leq &0||\psi ||_{C^{k}}+||\rho ||_{C^{k}}||[Q_{\delta ,\phi }\circ
L_{T_{0}}(\psi )]||_{1}  \notag \\
&\leq &0||\psi ||_{C^{k}}+||\rho ||_{C^{k}}||\psi ||_{1}.  \notag
\end{eqnarray}

This implies that the transfer operators in the family are uniformly bounded
as operators $L^{1}\rightarrow C^{k}$ and hence each invariant probability
measure $f_{\delta ,\phi }$ of $L_{\delta ,\phi }$ is such that $||f_{\delta
,\phi }||_{BV}\leq ||f_{\delta ,\phi }||_{C^{k}}\leq ||\rho ||_{C^{k}}$ and
then the family of operators satisfy $(Exi1).$

The assumption $(Exi2)$ (or $(\ref{nn})$) is provided similarly as a
consequence of the Lemma \ref{rimonta} and Lemma \ref{laquattro}.

In order to apply Theorem \ref{existence1} we consider a suitable projection 
$\pi _{n}$. As in the proof of Proposition \ref{existenceexp}, let $\mathcal{%
P}_{n}$ be the partition subdividing the circle into $n$ equal intervals.
Consider $\pi _{n}$ defined as in the proof of Proposition \ref{existenceexp}
by $\pi _{n}(f)=\mathbf{E}(f|\mathcal{P}_{n})$, where the conditional
expectation is made using the Lebesgue measure, projecting to piecewise
constant functions. Again, the discretized operators satisfy a uniform
Lasota\ Yorke inequality on $BV$ and $L^{1}$, indeed%
\begin{eqnarray*}
||\pi _{n}L_{\delta ,\phi }\pi _{n}(\psi )||_{BV} &\leq &||L_{\delta ,\phi
}\pi _{n}(\psi )||_{BV} \\
&\leq &||L_{\delta ,\phi }\pi _{n}(\psi )||_{C^{k}} \\
&\leq &||\rho \ast \lbrack Q_{\delta ,\phi }\circ L_{T_{0}}(\pi _{n}(\psi
))]||_{C^{k}} \\
&\leq &||\rho ||_{C^{k}}||\pi _{n}(\psi )||_{1} \\
&\leq &||\rho ||_{C^{k}}||\psi ||_{1}
\end{eqnarray*}

and $Exi1.b$ is satisfied. We can then apply Theorem \ref{existence1}, and
get the existence of an invariant probability density $f$ in $BV.$ Since $%
\rho $ $\in C^{k}$ and $L_{\delta ,f}f=f$ we get that $f\in C^{k}$ for all $%
k\geq 1$, also proving $($\ref{1234}$).$

Now we apply Theorem \ref{existence} to get the uniqueness. In this case we
consider $B_{s}=C^{1}$ and $B_{w}=L^{1}$. \ We first have to prove that for $%
\delta $ small enough each operator $L_{\delta ,\phi }$ with $\phi \in P_{w}$
ha a unique invariant probability measure in $P_{w}.$ Since $L_{0}$ has
convergence to equilibrium, is regularizing and $C^{1}$ is compactly
immersed in $L^{1}$ it is standard to find that this operator has a unique
invariant probability measure. \ From the convergence to equilibrium, the
small perturbation assumption $(Exi2)$ we verified above and the
regularization property $(\ref{eq:regineqI})$ we get that there is $\gamma
\geq 0$ such that $L_{\delta ,\phi }$ has convergence to equilibrium for all 
$\delta \leq \gamma $ \ and $\phi \in P_{w}$. \ Indeed let us consider $f\in
V_{s}$ and suppose that by convergence to equilibrium $n$ is such that $%
||L_{0}^{n}f||_{L^{1}}\leq \frac{1}{2||\rho ||_{C^{1}}}||f||_{C^{1}}$ then 
\begin{equation*}
||L_{0}^{n+1}f||_{C^{1}}\leq ||\rho
||_{C^{1}}||L_{T_{0}}(L_{0}^{n}f)||_{L^{1}}\leq \frac{1}{2}||f||_{C^{1}}.
\end{equation*}

This implies that $L_{0}^{n+1}$ is a contraction of $V_{s}$. \ Now let us
consider $\phi \in P_{w},$ $\gamma \leq \frac{1}{4||\rho ||_{C^{1}}K(C+nB)}$
\ and $\delta \leq \gamma .$\ By a computation similar to the proof of Lemma %
\ref{XXX} \ (remark that $Q=1$ in the case $B_{w}=L^{1}$ and by $(\ref%
{LYnoise})$, $\lambda =0$) we can get%
\begin{eqnarray}
||L_{\delta ,\phi }^{n}g-L_{0}^{n}g||_{L^{1}} &\leq &\delta
K(C||g||_{C^{1}}+nB||g||_{L^{1}}) \\
&\leq &\delta K(C+nB)||g||_{C^{1}} \\
&\leq &\frac{1}{4||\rho ||_{C^{1}}}||g||_{C^{1}}
\end{eqnarray}%
and then $||L_{\delta ,\phi }^{n}g||_{L^{1}}\leq \frac{3}{4||\rho ||_{C^{1}}}%
||g||_{C^{1}}$, thus repeating the same reasoning as before $L_{\delta ,\phi
}^{n+1}$ also is a contraction of $V_{s}$. \ Hence we have that when $\delta 
$ is small enough each $L_{\delta ,\phi }$ has convergence to equilibrium.
It follow that $L_{\delta ,\phi }$ also has spectral gap on $C^{1}$ and on $%
L^{1}$ Indeed for all $f\in V_{L^{1}}:=\{f\in L^{1},\int f=0\}$ we get $%
||L_{\delta ,\phi }f||_{C^{1}}\leq ||\rho ||_{C^{1}}||f||_{L^{1}}$ and if we
have if $n$ is such that $||L_{\delta ,\phi }^{n}f||_{L^{1}}\leq \frac{3}{%
4||\rho ||_{C^{1}}}||f||_{C^{1}}$ then again 
\begin{equation*}
||L_{\delta ,\phi }^{n+1}f||_{L^{1}}\leq \frac{3}{4||\rho ||_{C^{1}}}%
||L_{\delta ,\phi }f||_{C^{1}}\leq \frac{3}{4}||f||_{L^{1}}.
\end{equation*}%
Thus each $L_{\delta ,\phi }$ has a unique invariant probability measure $%
f_{\phi }\in P_{w}$. Furthermore, the resolvent of $L_{\delta ,\phi }$ is
defined on $V_{L^{1}}$and its norm uniformly bounded for every $\phi \in
P_{w}$.

Now we can prove that\ $(Exi3)$ holds. Let us consider probability measures $%
\phi _{1}$ and $\phi _{2}\in P_{w}$ and the operators $L_{\delta ,\phi _{1}}$
$L_{\delta ,\phi _{2}}$ we have seen that when $\delta \leq \gamma $ \ these
operators have unique fixed probability densities $f_{\phi _{1}},f_{\phi
_{2}}$. We want to prove that 
\begin{equation*}
||f_{\phi _{1}}-f_{\phi _{2}}||_{L^{1}}\leq K_{2}||\phi _{1}-\phi
_{2}||_{L^{1}}.
\end{equation*}

We hence apply a construction similar to the proof of Theorem \ref%
{thm:linresp} to the family of operators $\hat{L}_{\epsilon
}:L^{1}\rightarrow L^{1}$ defined by%
\begin{equation*}
\hat{L}_{\epsilon }=L_{\delta ,\phi _{1}}+\epsilon \lbrack L_{\delta ,\phi
_{2}}-L_{\delta ,\phi _{1}}].
\end{equation*}

Consider%
\begin{eqnarray*}
(Id-L_{\delta ,\phi _{2}})(f_{\phi _{2}}-f_{\phi _{1}}) &=&f_{\phi
_{2}}-L_{\delta ,\phi _{2}}f_{\phi _{2}}-f_{\phi _{1}}+L_{\delta ,\phi
_{2}}f_{\phi _{1}} \\
&=&(L_{\delta ,\phi _{2}}-L_{\delta ,\phi _{1}})f_{\phi _{1}}.
\end{eqnarray*}%
We have that%
\begin{equation*}
(f_{\phi _{2}}-f_{\phi _{1}})=(Id-L_{\delta ,\phi _{2}})^{-1}(L_{\delta
,\phi _{2}}-L_{\delta ,\phi _{1}})f_{\phi _{1}}.
\end{equation*}%
By the fact that $(Id-L_{\delta ,\phi _{2}})^{-1}$ is well defined and
continuous on $V_{L^{1}}$ remarked before and by the fact that \ $||f_{\phi
_{1}}||_{C^{1}}\leq ||\rho ||_{C^{1}}$ and $(Exi2)$ we get 
\begin{equation*}
||f_{\phi _{2}}-f_{\phi _{1}}||_{L^{1}}\leq \delta K||\rho ||_{C^{1}}||\phi
_{2}-\phi _{1}||_{L^{1}}
\end{equation*}%
and then $(Exi3)$ is verified. Applying Theorem \ref{existence} we then get
the uniqueness for $\delta $ small enough.
\end{proof}

\begin{proposition}[Exponential convergence to equilibrium]
\label{connoise}Let $\mathcal{L}_{\delta }$ be the family of self-consistent
transfer operators arising from a map $T_{0}$, a kernel $\rho $ as above,
and $h\in C^{k}$ with $k\geq 1$. Suppose the uncoupled system $\mathcal{L}%
_{0}$ has convergence to equilibrium. Let $f_{\delta }$ be an invariant
probability measure of $\mathcal{L}_{\delta }.$ Then there exists $\overline{%
\delta }>0$ \ and $C,\gamma \geq 0$ such that for all $0<\delta <\overline{%
\delta }$ and each probability density\ $\nu \in C^{k}$ we have%
\begin{equation*}
||\mathcal{L}_{\delta }^{n}(\nu )-f_{\delta }||_{C^{k}}\leq Ce^{-\gamma
n}||\nu -f_{\delta }||_{C^{k}}.
\end{equation*}
\end{proposition}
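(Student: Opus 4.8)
The plan is to deduce the statement from the general exponential convergence result, Theorem \ref{expco}, exactly as in the expanding case treated in Proposition \ref{convmaps}, replacing the Sobolev scale by the scale of $C^k$ spaces and letting the regularization inequality (\ref{eq:regineqI}) play the role of the Lasota--Yorke inequality. Concretely, I would fix $k\geq 1$ and apply Theorem \ref{expco} to the family $L_{\delta ,\phi }=\rho \ast [Q_{\delta ,\phi }\circ L_{T_{0}}]$ of (\ref{noisefam}) with the three spaces $B_{ss}=C^{k+1}(\mathbb{S}^{1})$, $B_{s}=C^{k}(\mathbb{S}^{1})$ and $B_{w}=L^{1}(\mathbb{S}^{1})$, so that $\mathcal{L}_{\delta }$ is the associated self consistent operator and the target norm of the statement is the norm of $B_{s}$.

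First I would check the standing bounds and $(Con1)$. The regularization inequality (\ref{eq:regineqI}), already used in (\ref{LYnoise}), gives at once $||L_{\delta ,\phi }f||_{C^{m}}\leq ||\rho ||_{C^{m}}||f||_{L^{1}}$ for every $m$; since each $L_{\delta ,\phi }$ is a Markov operator it contracts the $L^{1}$ norm. Hence $(Con1)$ holds with $\lambda _{1}=0$ and $B=||\rho ||_{C^{k}}$, and the uniform bound (\ref{supp1}) on $||L_{\delta ,\phi }||_{B_{ss}\rightarrow B_{ss}}$ follows from $||f||_{L^{1}}\leq ||f||_{C^{k+1}}$. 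Next, $(Con2)$ is obtained exactly as the assumption $(Exi2)$ was verified in the proof of Proposition \ref{existence copy(1)}: Lemma \ref{rimonta} controls $||\Phi _{\delta ,\mu }\circ T_{0}-\Phi _{\delta ,\nu }\circ T_{0}||_{C^{0}}$ by $\delta K||\mu -\nu ||_{L^{1}}$, and Lemma \ref{laquattro} then bounds the difference of the corresponding annealed operators; applying Lemma \ref{laquattro} at regularity $k+1$ yields the stronger $B_{ss}\rightarrow B_{s}$ estimate of $(Con2)$, while at regularity $1$ it yields the $B_{s}\rightarrow B_{w}$ estimate, and the limiting case in which one of the two measures corresponds to the uncoupled dynamics gives (\ref{mm}).

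For the invariant measure I would invoke Proposition \ref{existence copy(1)}, which provides, for every $\delta \geq 0$, a fixed density $f_{\delta }\in C^{\infty }$ with $||f_{\delta }||_{C^{m}}\leq ||\rho ||_{C^{m}}$ for all $m$; in particular $f_{\delta }\in P_{ss}$ and the bound (\ref{supp2}), namely $\sup_{\delta }||f_{\delta }||_{C^{k+1}}\leq ||\rho ||_{C^{k+1}}<\infty $, holds. Once all of $(Con1)$--$(Con3)$, (\ref{supp1}) and (\ref{supp2}) are in hand, Theorem \ref{expco} produces $\overline{\delta }$ and constants $C,\gamma \geq 0$ giving $||\mathcal{L}_{\delta }^{n}(\nu )-f_{\delta }||_{C^{k}}\leq Ce^{-\gamma n}||\nu -f_{\delta }||_{C^{k}}$ for $\nu $ in $P_{s}$, which is the claim (as in the use of Theorem \ref{expco} inside Proposition \ref{convmaps}, a $C^{k}$ probability density is here an admissible initial datum, being an element of $P_{s}$).

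The step I expect to require the most care is $(Con3)$, since the hypothesis is stated only as the qualitative convergence to equilibrium (\ref{coneq}) for $\mathcal{L}_{0}$, whereas $(Con3)$ demands the quantitative, uniform bound $||L_{0}^{n}v||_{L^{1}}\leq a_{n}||v||_{C^{k}}$ with a single sequence $a_{n}\rightarrow 0$ valid on the whole zero-average ball $V_{s}$. The way to close this gap is the argument recalled around Corollary \ref{uuuu}: the operator $L_{0}=\rho \ast L_{T_{0}}$ is compact on $C^{k}$ by (\ref{eq:regineqI}) and the Ascoli--Arzel\`a theorem, so (\ref{coneq}) upgrades to a genuine spectral gap on $C^{k}$, whence $||L_{0}^{n}v||_{C^{k}}\leq Cr^{n}||v||_{C^{k}}$ for $v\in V_{s}$ with some $r<1$; composing with $||\cdot ||_{L^{1}}\leq ||\cdot ||_{C^{k}}$ then gives the required $a_{n}$. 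All remaining estimates are direct transcriptions of the expanding-map computations of Proposition \ref{convmaps}, with the regularization of the noise making several of them (in particular the vanishing of the leading Lasota--Yorke coefficient, $\lambda _{1}=0$) even simpler.
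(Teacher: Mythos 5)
Your proposal is correct and takes essentially the same route as the paper: the paper's proof likewise applies Theorem \ref{expco} with $B_{ss}=C^{k+1}$, $B_{s}=C^{k}$, $B_{w}=L^{1}$, verifies $(Con1)$ via $(\ref{LYnoise})$, obtains $(Con2)$ from Lemma \ref{rimonta} and Lemma \ref{laquattro}, and treats $(Con3)$ as given by the hypothesis of the statement. Your additional step upgrading the qualitative convergence $(\ref{coneq})$ to the uniform bound demanded by $(Con3)$ via compactness and the spectral gap is a detail the paper passes over silently (it simply reads the hypothesis as $(Con3)$), so your care there is a refinement of, not a departure from, the paper's argument.
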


\begin{proof}
Again the proof is a direct application of Theorem \ref{expco} to $\mathcal{L%
}_{\delta }$ \ considering the spaces $B_{ss}=C^{k+1},B_{s}=C^{k}$ and $%
B_{w}=L^{1}$. The assumption $(Con1)$ for this kind of systems is already \
verified in $(\ref{LYnoise})$. \ The assumption $(Con2)$ is as a \ direct
consequence of Lemma \ref{rimonta} and Lemma \ref{laquattro}. The assumption 
$(Con3)$ is required as an assumption in this statement.
\end{proof}

Let us now consider a linear response result for the invariant measure of $%
\mathcal{L}_{\delta }$ when $\delta \rightarrow 0$ in the case of coupled
maps with additive noise.

\begin{proposition}[Linear Response for coupled maps with additive noise]
\label{thm:linresp copy(2)} Let $\mathcal{L}_{\delta }$ be the family of
self-consistent transfer operators arising from a map $T_{0}$, a kernel $%
\rho $ and $h$ as above. Suppose \ the uncoupled initial transfer operator $%
\mathcal{L}_{0}$ has convergence to equilibrium in the sense of $\ (\ref%
{coneq})$. Let $f_{0}\in C^{k}$ be the unique invariant probability density
of $L_{0}$ and $f_{\delta }\in C^{k}$ be the invariant probability density
of $L_{\delta }$ (unique when $\delta $ is small enough). Then we have the
following Linear Response formula%
\begin{equation}
\lim_{\delta \rightarrow 0}\left\Vert \frac{f_{\delta }-f_{0}}{\delta }%
+(Id-L_{0})^{-1}\rho \ast
(L_{T_{0}}(h_{0})\int_{S^{1}}h(x,y)h_{0}(y)dy)^{\prime }\right\Vert
_{C^{k-1}}=0.  \label{11121}
\end{equation}
\end{proposition}

Before the proof, we recall some preliminary result on the response of
systems with additive noise. We now consider small perturbations of our
random maps with additive noise by composition with a map $D_{\delta }$,
which is when $\delta $ is small, a diffeomorphism near to the identity.
Consider a map $S\in C^{2}(\mathbb{S}^{1},\mathbb{R})$ and \ the map $%
D_{\delta }:\mathbb{S}^{1}\rightarrow \mathbb{S}^{1}$ defined by $D_{\delta
}=Id+\delta S${. \ \ }Let us consider then the perturbation of $T_{0}$ by
composition with $D_{\delta }$ defined by $T_{\delta }=T_{0}\circ D_{\delta
}.$ Starting from this family of maps and a kernel $\rho $ we can consider a
family of dynamical systems with additive noise as in $(\ref{eq:syswaddnoise}%
).$ Since $L_{T_{\delta }}=L_{D_{\delta }}\circ L_{T_{0}}$, to this system
we associate the annealed transfer operator defined by%
\begin{equation}
L_{\delta }:=\rho \ast L_{D_{\delta }\circ T_{0}}=\rho \ast (L_{D_{\delta
}}\circ L_{T_{0}}).  \label{lui}
\end{equation}

Now, in order to apply our general theorems it will be useful to consider
the derivative operator $\dot{L}$ for the family of operators $(L_{D_{\delta
}\circ T_{0}})_{\delta \in \lbrack 0,\overline{\delta }]}$. \ In this
direction, the following result (\cite[Theorem 24]{GS})\ gives some useful
estimates.

\begin{proposition}
\label{thm:Taylorexpfortransferop} Let $(L_{\delta })_{\delta \in \lbrack 0,%
\overline{\delta }]}$ be the family of transfer operators as described in $(%
\ref{lui})$.\ For all $k\geq 2$%
\begin{equation}
\lim_{\delta \rightarrow 0}\left\Vert \dfrac{L_{\delta }-L_{0}}{\delta }-%
\dot{L}\right\Vert _{C^{k}\rightarrow C^{k-1}}=0
\end{equation}%
where $\dot{L}:C^{k}(\mathbb{S}^{1})\rightarrow C^{k-1}(\mathbb{S}^{1})$ is
defined by: 
\begin{equation*}
\dot{L}(f)=-\rho \ast (S\cdot L_{T_{0}}f)^{\prime }.
\end{equation*}
\end{proposition}

\begin{proof}[Proof of Proposition \protect\ref{thm:linresp copy(2)}]
Similar to the proof of Proposition \ref{thm:linresp copy(1)}, the proof is
a direct application of Theorem \ref{thm:linresp}. We apply Theorem \ref%
{thm:linresp} to the family of operators $\mathcal{L}_{\delta }(\phi )=\rho
\ast \lbrack Q_{\delta ,\phi }(L_{T_{0}}(\phi ))]$ considering the spaces $%
B_{ss}=C^{k+2}(\mathbb{S}^{1})\subseteq B_{s}=C^{k+1}(\mathbb{S}^{1})\subset
B_{w}=C^{k}(\mathbb{S}^{1}).$ \ Let us now verify that the assumptions
needed to apply the theorem are satisfied. The assumption $(SS1)$
(regularity bounds), is implied by Proposition \ref{existence copy(1)}.

The assumption $(SS2)$(convergence to equilibrium for the unperturbed
operator), is supposed to hold. The assumption $(SS3)$ and the first part of
the assumption $(LR2)$ (small perturbation) follows from Lemma \ref{rimonta}
and \ \ref{laquattro}. \ Indeed we have to prove that there is $K\geq 0$
such that $\mathcal{L}_{0}-\mathcal{L}_{\delta }$ is $K\delta $-Lipschitz
when considered as a function $\overline{B}_{2M}\rightarrow B_{w}$ and $%
B_{2M}\rightarrow B_{s}$. In the first case we have to prove that for $\phi
_{1},\phi _{2}\in \{\phi \in C^{k+1},||\phi ||_{C^{k+1}}\leq 2M\}$%
\begin{equation*}
||\rho \ast \lbrack Q_{\delta ,\phi _{1}}(L_{T_{0}}(\phi
_{1}))-L_{T_{0}}(\phi _{1})]-\rho \ast \lbrack Q_{\delta ,\phi
_{2}}(L_{T_{0}}(\phi _{2}))-L_{T_{0}}(\phi _{2})]||_{C^{k}}\leq K\delta
||\phi _{1}-\phi _{2}||_{C^{k+1}}.
\end{equation*}

We have 
\begin{eqnarray*}
&&||\rho \ast \lbrack Q_{\delta ,\phi _{1}}(L_{T_{0}}(\phi
_{1}))-L_{T_{0}}(\phi _{1})]-\rho \ast \lbrack Q_{\delta ,\phi
_{2}}(L_{T_{0}}(\phi _{2}))-L_{T_{0}}(\phi _{2})]||_{C^{k}} \\
&\leq &||\rho \ast Q_{\delta ,\phi _{1}}(L_{T_{0}}(\phi _{1}))-\rho \ast
L_{T_{0}}(\phi _{1})-\rho \ast Q_{\delta ,\phi _{1}}(L_{T_{0}}(\phi _{2})) \\
&&+\rho \ast Q_{\delta ,\phi _{1}}(L_{T_{0}}(\phi _{2}))-\rho \ast Q_{\delta
,\phi _{2}}(L_{T_{0}}(\phi _{2}))+\rho \ast L_{T_{0}}(\phi _{2})||_{C^{k}}
\end{eqnarray*}

and 
\begin{eqnarray*}
&&||\rho \ast Q_{\delta ,\phi _{1}}(L_{T_{0}}(\phi _{1}))-\rho \ast
L_{T_{0}}(\phi _{1})-\rho \ast Q_{\delta ,\phi _{1}}(L_{T_{0}}(\phi
_{2}))+\rho \ast L_{T_{0}}(\phi _{2})||_{C^{k}} \\
&\leq &||\rho \ast Q_{\delta ,\phi _{1}}(L_{T_{0}}(\phi _{1}-\phi
_{2}))-\rho \ast L_{T_{0}}(\phi _{1}-\phi _{2})||_{C^{k}} \\
&\leq &\delta CK_{1}2M||\phi _{1}-\phi _{2}||_{C^{k+1}}
\end{eqnarray*}%
applying Lemma \ref{rimonta} with $\psi =0$ and $\phi =\phi _{1}$ and Lemma
\ \ref{laquattro}.

Furthermore%
\begin{equation*}
||\rho \ast Q_{\delta ,\phi _{1}}(L_{T_{0}}(\phi _{2}))-\rho \ast Q_{\delta
,\phi _{2}}(L_{T_{0}}(\phi _{2}))||_{C^{k}}\leq \delta CK_{1}2M||\phi
_{1}-\phi _{2}||_{C^{k+1}}
\end{equation*}

again by Lemma \ref{rimonta} and Lemma \ \ref{laquattro}.

In the second case we have to prove that for $\phi _{1},\phi _{2}\in \{\phi
\in C^{k+2},||\phi ||_{C^{k+2}}\leq 2M\}$ 
\begin{equation*}
||\rho \ast \lbrack Q_{\delta ,\phi _{1}}(L_{0}(\phi _{1}))-L_{0}(\phi
_{1})]-\rho \ast \lbrack Q_{\delta ,\phi _{2}}(L_{0}(\phi _{2}))-L_{0}(\phi
_{2})]||_{C^{k+1}}\leq k\delta ||\phi _{1}-\phi _{2}||_{C^{k+2}}
\end{equation*}%
which can be proved similarly as before.

The assumption $(LR1)$ (resolvent of the unperturbed operator) follows from
Proposition \ref{uuuu}. The second part of assumption $(LR2)$ (derivative
operator) follows from Proposition \ref{thm:Taylorexpfortransferop} in a way
similar to what is done in the proof of Proposition \ref{thm:linresp copy(1)}%
. Applying Theorem \ref{thm:linresp}, we then get%
\begin{equation*}
\lim_{\delta \rightarrow 0}\left\Vert \frac{h_{\delta }-h_{0}}{\delta }-(Id-%
\mathcal{L}_{0})^{-1}\mathcal{\dot{L}}h_{0}\right\Vert _{C^{k}}=0.
\end{equation*}%
We can now let the formula be more explicit by finding an expression for $%
\mathcal{\dot{L}}.$ In our case 
\begin{equation*}
S(x)=\int_{S^{1}}h(x,y)h_{0}(y)dy
\end{equation*}%
and then%
\begin{equation*}
\mathcal{\dot{L}}(h_{0})=\rho \ast (-h_{0}S)^{\prime }=\rho \ast
(-L_{T_{0}}(h_{0})\int_{S^{1}}h(x,y)h_{0}(y)dy)^{\prime }.
\end{equation*}
\end{proof}

\begin{remark}
\label{rmk37}The convergence to equilibrium assumption $(\ref{coneq})$
required in\ Proposition \ref{connoise} and Proposition \ref{thm:linresp
copy(2)}\ for the uncoupled transfer operator $\mathcal{L}_{0}$\ is easy to
be verified in many examples of systems whose deterministic part has some
kind of topological mixing and the noise is distributed by a smooth kernel
or it has large support in some sense, see \cite[ Corollary 5.7.1]{LM}, \cite%
[Lemma 41]{GG} or \cite[Remarks 2.3 and 6.4]{AFG}. In more complicated
situations it can be also verified by computer aided estimates (\cite{GMN}).
\end{remark}

\begin{remark}
\label{rmk2}We remark that another meaningful definition for the transfer
operator associated with a family of random maps coupled in by their mean
field could be the following%
\begin{equation}
\mathcal{L}_{\delta }(\phi )=[Q_{\delta ,\phi }(\rho \ast L_{T_{0}}(\phi ))].
\label{lali}
\end{equation}%
In this case one applies the coupling directly to the annealed transfer
operator of the random maps. \ Here for small $\delta $ the application of
our theory seems to be possible by estimates similar to the ones shown in
this section and in Section \ref{secmap}. Indeed the transfer operator
realizing the coupling $Q_{\delta ,\phi }$ is applied after the convolution.
Considering $\phi \in L^{1}$, by $(\ref{eq:regineqI})$ we get that $(\rho
\ast L_{T_{0}}(\phi ))$ is regularized to the regularity of the kernel $\rho 
$. If $h$ is smooth enough and $\delta $ small enough, this regularity is
preserved by the application of $Q_{\delta ,\phi }$ leading to the
verification of regularity properties like $Exi1$, $Exi1.b,~Con1$ and $SS1$.

The verification of small perturbation properties like $Exi2$, $Con2$, $SS3$
and $LR2$ \ for the family of transfer operators associated with $(\ref{lali}%
)$ $L_{\delta ,\phi }=Q_{\delta ,\phi }(\rho \ast L_{T_{0}})$ relies on \
the estimation of the distance of $Q_{\delta ,\phi _{1}}$ from $Q_{\delta
,\phi _{2}}$ on a mixed norm which can be done in a way similar to the use
of Lemma \ref{launo} and Lemma \ref{rimonta} as done in Section \ref{secmap}%
. The form of the derivative operator $\dot{L}$ is probably similar to the
one given at Proposition \ref{thm:regdettransferop}.
\end{remark}

\section{Self-consistent operators not coming from coupled map networks\label%
{strange}}

In this section we consider a class of self-consistent transfer operators
not coming from networks of coupled maps, giving other examples of
application of our general theory. The systems considered are inspired to
some examples studied in \cite{Se2} and \cite{Bla}, where we have a map
whose slope depends on the average of a certain observable\ during the
iterates. We add noise to simplify the functional analytic properties of the
system. Let us consider again a family of random maps on $[0,1]$ depending
on a probability measure $\mu $ and on a parameter $\delta \geq 0.$

Let us consider the classical tent map $T:[0,1]\rightarrow \lbrack 0,1]$,
defined by $T(x)=\min (2x,2-2x)$, the family of maps \ $T_{\delta ,\mu
}:[0,1]\rightarrow \lbrack 0,1]$ we consider as a self-consistent
perturbation of the tent map are defined by%
\begin{equation*}
T_{\delta ,\mu }(x)=\frac{T(x)}{1+\delta \int xd\mu }.
\end{equation*}%
Then adding a noise-like perturbation to the map $T_{\delta ,\mu }$ we
consider the process $(X_{n})_{n\in \mathbb{N}}$ defined on $[0,1]$ by%
\begin{equation}
X_{n+1}=T_{\delta ,\mu }(X_{n})\hat{+}\Omega _{n}\mod 1  \label{lei}
\end{equation}%
where $(\Omega _{n})_{n\in \mathbb{N}}$ are i.i.d random variables
distributed according to a kernel $\rho \in Lip(\mathbb{R}),$ supported on $%
[-1,1]$ with Lipschitz constant $L$ and where $\hat{+}$ is the
\textquotedblleft boundary reflecting" sum, defined by $a\hat{+}b:=\pi (a+b)$%
, and $\pi :\mathbb{R}\rightarrow \lbrack 0,1]$ is the piecewise linear map $%
\pi (x)=\min_{i\in \mathbb{Z}}|x-2i|$. \ This is a model of a system on $%
[0,1]$ with reflecting boundary conditions. When the noise sends a point
outside the space $[0,1]$ the projection $\pi $ is applied to let the image
of the map again in $[0,1].$ Let us denote as $L_{\pi }$ the transfer
operator $L_{\pi }:L^{1}(\mathbb{R})\rightarrow L^{1}([0,1])$ associated
with the map $\pi .$ Let $b\in \mathbb{R}$ and $g\in Lip(\mathbb{R})$
consider the translation operator $\tau _{b}$ defined by $(\tau
_{b}~g)(y):=g(y+b)$.

The annealed transfer operator associated with the random dynamical system $(%
\ref{lei})$ is a Markov operator and is given by the following kernel
operator (for details see \cite{AFG}, Section 6):%
\begin{equation}
L_{\delta ,\mu }f(x)=\int k_{\delta ,\mu }(x,y)f(y)dy,  \label{kernelL2}
\end{equation}%
where 
\begin{equation}
k_{\delta ,\mu }(x,y)=(L_{\pi }\tau _{-T_{\delta ,\mu }(y)}\rho )(x)
\label{eq:ker-add-noise-explicit}
\end{equation}%
and $x,y\in \lbrack 0,1]$. Since the perturbation induced on the system with
additive noise by increasing the parameter $\delta $ is not coming from the
composition with a diffeomorphism we cannot use the estimates from the
previous sections directly. We hence take a slightly different point of view
on systems with additive noise, and related basic estimates which were
developed in \cite{AFG}.

In this case we will consider $B_{w}=L^{2}[0,1].$ Let $P_{w}$ be the set of
measures having a probability density in $L^{2}$. The nonlinear
self-consistent operator we consider in this case hence is given by $%
\mathcal{L}_{\delta }:P_{w}\rightarrow P_{w}$ defined as%
\begin{equation}
\mathcal{L}_{\delta }\mu =L_{\delta ,\mu }\mu  \label{strangeop}
\end{equation}%
for all $\mu \in P_{w}.$ We remark that since $\rho \in Lip(\mathbb{R})$ and
it is supported on $[-1,1]$ the kernel of this operator is bounded: $%
k_{\delta ,\mu }\in L^{\infty }([0,1]^{2})$. Let us recall some classical
and useful facts about kernel operators.

\begin{itemize}
\item If $k_{\delta ,\mu }\in L^{\infty }([0,1]^{2})$, then 
\begin{equation}
||L_{\delta ,\mu }f||_{\infty }\leq ||k_{\delta ,\mu }||_{L^{\infty
}([0,1]^{2})}||f||_{1}  \label{KF2}
\end{equation}%
and the operator $L_{\delta ,\mu }:L^{1}\rightarrow L^{\infty }$ is bounded.
Furthermore, $\Vert L_{\delta ,\mu }\Vert _{L^{p}\rightarrow L^{\infty
}}\leq \Vert k_{\delta ,\mu }\Vert _{L^{\infty }([0,1]^{2})}$ for $1\leq
p\leq \infty $.

\item The operator $L_{\delta ,\mu }:L^{2}\rightarrow L^{2}$ is compact and 
\begin{equation}
||L_{\delta ,\mu }f||_{2}\leq ||k_{\delta ,\mu
}||_{L^{2}([0,1]^{2})}||f||_{2}  \label{KF}
\end{equation}%
(see \cite[Proposition 4.7]{C} \ or \cite{KF}).
\end{itemize}

It is also well known that these Markov operators have invariant probability
densities in $L^{2}$ (see e.g. \cite[Theorem 2.2]{AFG}). \ Since $k_{\delta
,\mu }\in L^{\infty }([0,1]^{2}),$ by $($\ref{KF2}$)$ we also have that any
invariant probability density $f_{\delta ,\mu }$ for this operator satisfies%
\begin{equation}
||f_{\delta ,\mu }||_{\infty }\leq ||k_{\delta ,\mu }||_{L^{\infty
}([0,1]^{2})}\leq ||\rho ||_{L^{\infty }[0,1]}.  \label{sl2}
\end{equation}%
\ In \cite[Section 6]{AFG} \ the following estimates are proved for such
kernel operators coming from maps with additive noise and reflecting
boundaries conditions (see Proposition 6.2):

\begin{proposition}
\label{kdottt}Assume that $k_{\delta ,\mu }$ is the kernel associated to the
transfer operator of a system with additive noise and reflecting boundaries
composed by a map $T_{\delta ,\mu }$ and a noise kernel $\rho $ (see $($\ref%
{eq:ker-add-noise-explicit}$)$ ). Let us fix $\delta ,$ suppose that the
family of interval maps $\{T_{\epsilon }\}_{\epsilon \in \lbrack 0,\overline{%
\epsilon })}$ satisfies%
\begin{eqnarray}
T_{0} &=&T_{\delta ,\mu }  \label{mub} \\
T_{\epsilon } &=&T_{0}+\epsilon \cdot \dot{T}+T_{\epsilon },  \notag
\end{eqnarray}%
where $\dot{T},~T_{\epsilon }\in L^{2}$ and $\Vert T_{\epsilon }\Vert
_{2}=o(\epsilon )$. Consider the transfer operator $L_{0}$ associated with
the unperturbed system with map $T_{0}$ \ and kernel $\rho $. Let $%
L_{\epsilon }$ be the transfer operator associated with the system driven by 
$T_{\epsilon }$ and kernel $\rho $. Then there are $\overline{\epsilon }%
,K\geq 0$ such that for $\epsilon \in \lbrack 0,\overline{\epsilon })$%
\begin{equation}
||L_{0}-L_{\epsilon }||_{L^{2}\rightarrow L^{2}}\leq \epsilon K  \label{kd2}
\end{equation}

and $\forall f_{0}\in L^{2}$%
\begin{equation}
\lim_{\epsilon \rightarrow 0}\frac{L_{\epsilon }-L_{0}}{\epsilon }%
f_{0}=-\int_{0}^{1}\left( L_{\pi }\left( \tau _{-T_{0}(y)}\frac{d\rho }{dx}%
\right) \right) (x)\dot{T}(y)f_{0}(y)dy,  \label{kde}
\end{equation}%
with convergence in $L^{2}.$
\end{proposition}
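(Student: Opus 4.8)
The plan is to carry out both estimates at the level of the integral kernels and only at the end translate them into operator statements. By $(\ref{kernelL2})$ and $(\ref{eq:ker-add-noise-explicit})$ the operators $L_{0}$ and $L_{\epsilon }$ are the kernel operators with kernels $k_{0}(x,y)=(L_{\pi }\tau _{-T_{0}(y)}\rho )(x)$ and $k_{\epsilon }(x,y)=(L_{\pi }\tau _{-T_{\epsilon }(y)}\rho )(x)$, so $L_{\epsilon }-L_{0}$ is itself a kernel operator with kernel $k_{\epsilon }-k_{0}$. By $(\ref{KF})$ we have $||L_{\epsilon }-L_{0}||_{L^{2}\rightarrow L^{2}}\leq ||k_{\epsilon }-k_{0}||_{L^{2}([0,1]^{2})}$, so everything reduces to understanding how the single translated bump $\tau _{-a}\rho $ depends on its shift $a=T_{\epsilon }(y)$, and how $L_{\pi }$ acts on the resulting differences; here $L_{\pi }$ is a bounded linear operator on functions supported in the fixed bounded region swept out by these bumps, since only finitely many branches of $\pi $ contribute there.

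For the Lipschitz bound $(\ref{kd2})$ I would write
\[
k_{\epsilon }(x,y)-k_{0}(x,y)=\bigl(L_{\pi }(\tau _{-T_{\epsilon }(y)}\rho -\tau _{-T_{0}(y)}\rho )\bigr)(x).
\]
Since $\rho $ is $L$-Lipschitz and supported on $[-1,1]$, the inner difference $z\mapsto \rho (z-T_{\epsilon }(y))-\rho (z-T_{0}(y))$ is pointwise bounded by $L\,|T_{\epsilon }(y)-T_{0}(y)|$ and supported in a set of bounded measure, hence has $L^{2}$ norm $\leq C\,L\,|T_{\epsilon }(y)-T_{0}(y)|$; applying the bounded operator $L_{\pi }$ preserves this up to a constant. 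Squaring in $x$, integrating in $y$, and using $T_{\epsilon }-T_{0}=\epsilon \dot{T}+t_{\epsilon }$ with $||t_{\epsilon }||_{2}=o(\epsilon )$ gives $||k_{\epsilon }-k_{0}||_{L^{2}([0,1]^{2})}\leq C'||T_{\epsilon }-T_{0}||_{2}\leq C'(\epsilon ||\dot{T}||_{2}+o(\epsilon ))$, which is $\leq K\epsilon $ for $\epsilon $ small, proving $(\ref{kd2})$.

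For the derivative formula $(\ref{kde})$ the key is to differentiate in the shift. By the fundamental theorem of calculus applied to $a\mapsto \rho (z-a)$ and pulling the bounded linear operator $L_{\pi }$ through the integral,
\[
\frac{k_{\epsilon }(x,y)-k_{0}(x,y)}{\epsilon }=-\frac{1}{\epsilon }\int_{T_{0}(y)}^{T_{\epsilon }(y)}\Bigl(L_{\pi }\tau _{-a}\tfrac{d\rho }{dx}\Bigr)(x)\,da.
\]
The target integrand in $(\ref{kde})$ is $-\dot{T}(y)\bigl(L_{\pi }\tau _{-T_{0}(y)}\tfrac{d\rho }{dx}\bigr)(x)$, and the two agree in the limit because $s\mapsto \tau _{-s}(d\rho /dx)$ is continuous from $\mathbb{R}$ into $L^{2}$ and $\tfrac{1}{\epsilon }(T_{\epsilon }(y)-T_{0}(y))\rightarrow \dot{T}(y)$. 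Testing against $f_{0}$, i.e. multiplying by $f_{0}(y)$ and integrating in $y$, should then yield the claimed expression with convergence in $L^{2}([0,1])$.

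The main obstacle is precisely this last limit interchange, delicate because $t_{\epsilon }$ is controlled only in $L^{2}(dy)$, so that $\tfrac{1}{\epsilon }(T_{\epsilon }-T_{0})\to \dot{T}$ holds in $L^{2}(dy)$ rather than pointwise. Rather than pass to a pointwise-in-$y$ limit I would estimate directly the $L^{2}([0,1]^{2})$ distance between the kernel difference quotient and the target kernel, splitting it as (i) a term $\bigl(\tfrac{1}{\epsilon }\int_{T_{0}(y)}^{T_{\epsilon }(y)}\tau _{-a}(d\rho /dx)\,da-\dot{T}(y)\,\tau _{-T_{0}(y)}(d\rho /dx)\bigr)$ that is small by $L^{2}$-continuity of translation together with a Cauchy--Schwarz split of the $\tfrac{1}{\epsilon }(T_{\epsilon }-T_{0})$ factor, and (ii) a remainder proportional to $t_{\epsilon }(y)/\epsilon $ whose contribution, after applying $L_{\pi }$ and integrating against $f_{0}$, is $\leq C\,||t_{\epsilon }||_{2}/\epsilon =o(1)$, using $||f_{0}||_{\infty }\leq ||\rho ||_{L^{\infty }}$ from $(\ref{sl2})$. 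A secondary technical point is that $(\ref{kde})$ presupposes $d\rho /dx$ exists; this is supplied by the a.e. derivative guaranteed by the Lipschitz hypothesis, which lies in $L^{\infty }$ and is all that the integral representation above uses, or by assuming $\rho \in C^{1}$ outright.
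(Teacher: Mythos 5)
Your proposal is correct, but there is no in-paper argument to measure it against: the paper does not prove Proposition \ref{kdottt} at all, it imports it from \cite{AFG} (Propositions 6.2 and 6.3), as announced in the sentence preceding the statement. So what your argument buys is a self-contained proof where the paper offers only a citation. It is also the natural argument in this setting, and its first half coincides with the technique the paper itself uses for the neighbouring Proposition \ref{delta}: pass to the kernels, bound the operator norm by the $L^{2}([0,1]^{2})$ kernel norm via $(\ref{KF})$, use that $L_{\pi }$ contributes at most three branches on the bounded region swept by the translated bumps, and convert the Lipschitz property of $\rho $ into $|k_{\epsilon }-k_{0}|(x,y)\leq C|T_{\epsilon }(y)-T_{0}(y)|$; integrating in $y$ against $\Vert T_{\epsilon }-T_{0}\Vert _{2}\leq \epsilon \Vert \dot{T}\Vert _{2}+o(\epsilon )$ gives $(\ref{kd2})$ (as you note, with a constant valid after possibly shrinking $\overline{\epsilon }$, which is all the hypothesis $\Vert t_{\epsilon }\Vert _{2}=o(\epsilon )$ can yield and all that the later $\delta \rightarrow 0$ applications use; you also silently repaired the statement's typo $T_{\epsilon }$ for $t_{\epsilon }$). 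The second half is where you supply the substance of $(\ref{kde})$: the representation of the kernel increment as $-\int_{T_{0}(y)}^{T_{\epsilon }(y)}L_{\pi }\tau _{-a}\rho ^{\prime }\,da$ is legitimate because the Lipschitz $\rho $ is absolutely continuous with $\rho ^{\prime }\in L^{\infty }$, and your plan to estimate the $L^{2}([0,1]^{2})$ distance to the target kernel $-(L_{\pi }\tau _{-T_{0}(y)}\rho ^{\prime })(x)\dot{T}(y)$ works: the $t_{\epsilon }(y)/\epsilon $ remainder is $o(1)$ in $L^{2}(dy)$ by hypothesis, while the translation-continuity term is bounded by $|\dot{T}(y)|\,\omega (|T_{\epsilon }(y)-T_{0}(y)|)$ plus another $|t_{\epsilon }(y)|/\epsilon $ contribution, where $\omega $ is the bounded modulus of $L^{2}$-continuity of $s\mapsto \tau _{-s}\rho ^{\prime }$; dominated convergence with dominating function $C|\dot{T}|^{2}\in L^{1}$ and convergence in measure of $T_{\epsilon }$ to $T_{0}$ finishes it, which is the precise form of your somewhat loosely described ``Cauchy--Schwarz split''. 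Note finally that your scheme proves more than asked: the difference quotients converge in $L^{2}\rightarrow L^{2}$ operator norm, so convergence at the particular density $f_{0}$ holds for any $f_{0}\in L^{2}$ and the appeal to $(\ref{sl2})$ is not actually needed.
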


The inequality $(\ref{kd2})$ shows that the perturbations we are interested
in applying to the transfer operators associated with this kind of systems
are small perturbations in the $L^{2}\rightarrow L^{2}$ topology. We will
then consider the transfer operators associated with this kind of systems as
operators acting on $L^{2}[0,1]$ and in this subsection we will apply our
general statements with the choice $B_{ss}=B_{s}=B_{w}=L^{2}[0,1].$ \ We now
can apply Theorem \ref{existence} and prove

\begin{proposition}
\label{exist3}Let $\mathcal{L}_{\delta }$ be the self-consistent transfer
operator associated to $T_{\delta ,\mu }$ and $\rho $ as defined in $(\ref%
{strangeop})$. There are $M,\overline{\delta }\geq 0$ such that for all $%
\delta \in \lbrack 0,\overline{\delta }]$ there is a unique $f_{\delta }\in
P_{w}$ with $||f_{\delta }||_{L^{2}}\leq M$ such that%
\begin{equation*}
\mathcal{L}_{\delta }(f_{\delta })=f_{\delta }.
\end{equation*}
\end{proposition}

Before the proof of Proposition \ref{exist3} we need a couple of preliminary
results

\begin{proposition}
\label{delta}There is $C\geq 0$ such that for all $\mu _{1},$ $\mu _{2}\in
P_{w}\subseteq L^{2}$, 
\begin{eqnarray*}
||L_{\delta ,\mu _{2}}-L_{\delta ,\mu _{1}}||_{L^{2}\rightarrow L^{2}} &\leq
&\delta C||\mu _{1}-\mu _{2}||_{L^{2}} \\
||L_{0,\mu _{1}}-L_{\delta ,\mu _{1}}||_{L^{2}\rightarrow L^{2}} &\leq
&\delta C||\mu _{1}||_{L^{2}}
\end{eqnarray*}
\end{proposition}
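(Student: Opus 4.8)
The plan is to reduce both estimates to bounds on the $L^{2}([0,1]^{2})$ norm of the difference of the integral kernels, and then to control the kernels through the Lipschitz regularity of $\rho$. By the kernel-operator estimate $(\ref{KF})$, any kernel operator with kernel $\kappa \in L^{2}([0,1]^{2})$ has $\Vert \cdot \Vert_{L^{2}\rightarrow L^{2}} \le \Vert \kappa \Vert_{L^{2}([0,1]^{2})}$, so it suffices to show
\begin{equation*}
\Vert k_{\delta,\mu_{2}} - k_{\delta,\mu_{1}}\Vert_{L^{2}([0,1]^{2})} \le \delta C \Vert \mu_{1}-\mu_{2}\Vert_{L^{2}}, \qquad \Vert k_{0,\mu_{1}} - k_{\delta,\mu_{1}}\Vert_{L^{2}([0,1]^{2})} \le \delta C \Vert \mu_{1}\Vert_{L^{2}}.
\end{equation*}

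First I would estimate how the driving maps move. Writing $m(\mu):=\int_{0}^{1}x\,d\mu$, from $T_{\delta,\mu}(x)=T(x)/(1+\delta m(\mu))$ together with $0\le T\le 1$ and $1+\delta m(\mu)\ge 1$, one gets
\begin{equation*}
|T_{\delta,\mu_{2}}(y) - T_{\delta,\mu_{1}}(y)| = T(y)\,\frac{\delta\,|m(\mu_{1})-m(\mu_{2})|}{(1+\delta m(\mu_{1}))(1+\delta m(\mu_{2}))} \le \delta\,|m(\mu_{1}) - m(\mu_{2})|,
\end{equation*}
and by Cauchy--Schwarz $|m(\mu_{1})-m(\mu_{2})| = |\int_{0}^{1} x(\mu_{1}-\mu_{2})\,dx| \le \Vert x\Vert_{L^{2}[0,1]}\Vert \mu_{1}-\mu_{2}\Vert_{L^{2}} = \tfrac{1}{\sqrt{3}}\Vert \mu_{1}-\mu_{2}\Vert_{L^{2}}$. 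Thus $|T_{\delta,\mu_{2}}(y)-T_{\delta,\mu_{1}}(y)| \le \tfrac{\delta}{\sqrt{3}}\Vert \mu_{1}-\mu_{2}\Vert_{L^{2}}$ uniformly in $y$. The same computation comparing $\delta=0$ with $\delta$ at the same $\mu_{1}$ gives $|T_{\delta,\mu_{1}}(y)-T_{0,\mu_{1}}(y)| \le \delta\, m(\mu_{1}) \le \tfrac{\delta}{\sqrt{3}}\Vert \mu_{1}\Vert_{L^{2}}$, which will produce the second inequality along the same route.

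Next I would turn closeness of the maps into closeness of the kernels using $(\ref{eq:ker-add-noise-explicit})$, namely $k_{\delta,\mu}(x,y) = (L_{\pi}\tau_{-T_{\delta,\mu}(y)}\rho)(x)$. For fixed $y$ set $g_{y}(z):=\rho(z - T_{\delta,\mu_{2}}(y)) - \rho(z - T_{\delta,\mu_{1}}(y))$, so that $k_{\delta,\mu_{2}}(\cdot,y)-k_{\delta,\mu_{1}}(\cdot,y) = L_{\pi}g_{y}$. Since $\rho$ is $L$-Lipschitz, $\Vert g_{y}\Vert_{L^{\infty}} \le L\,|T_{\delta,\mu_{2}}(y)-T_{\delta,\mu_{1}}(y)|$, and since each shifted copy of $\rho$ is supported in an interval of length $2$, the function $g_{y}$ is supported in a set of measure at most $4$; hence $\Vert g_{y}\Vert_{L^{2}(\mathbb{R})}^{2} \le \Vert g_{y}\Vert_{\infty}\Vert g_{y}\Vert_{1} \le 4L^{2}|T_{\delta,\mu_{2}}(y)-T_{\delta,\mu_{1}}(y)|^{2}$. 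Because $\pi$ has slopes $\pm 1$ and is boundedly finite-to-one on this bounded support, Cauchy--Schwarz on the at most $N$ preimages together with $\int_{0}^{1} L_{\pi}(|g_{y}|^{2})\,dx = \Vert g_{y}\Vert_{L^{2}(\mathbb{R})}^{2}$ yields $\Vert L_{\pi}g_{y}\Vert_{L^{2}[0,1]}^{2} \le N\Vert g_{y}\Vert_{L^{2}(\mathbb{R})}^{2}$. Integrating in $y$ and inserting the map estimate gives $\Vert k_{\delta,\mu_{2}}-k_{\delta,\mu_{1}}\Vert_{L^{2}([0,1]^{2})}^{2} \le 4NL^{2}\cdot \tfrac{\delta^{2}}{3}\Vert \mu_{1}-\mu_{2}\Vert_{L^{2}}^{2}$, i.e. the first claim with $C=2L\sqrt{N/3}$; the second follows verbatim from the $\delta=0$ map estimate.

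The Lipschitz and Cauchy--Schwarz manipulations are routine. The one step that needs genuine care is the control of $L_{\pi}$ on $L^{2}$: I must confirm that the reflecting-boundary folding map $\pi$ has a uniformly bounded number of preimages over the (uniformly bounded) support of the relevant functions, so that $\Vert L_{\pi}g_{y}\Vert_{L^{2}[0,1]} \le \sqrt{N}\,\Vert g_{y}\Vert_{L^{2}(\mathbb{R})}$ with $N$ independent of $y$ and of $\mu_{1},\mu_{2},\delta$. This is exactly the type of estimate established in \cite{AFG}, so I would either invoke the relevant proposition there or verify the preimage count directly from the explicit form of $\pi$.
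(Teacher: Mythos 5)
Your proposal is correct and takes essentially the same route as the paper: reduce to the $L^{2}([0,1]^{2})$ norm of the kernel difference via $(\ref{KF})$, bound the distance of the maps $T_{\delta,\mu}$ by Cauchy--Schwarz on $\int x\,d\mu$, and exploit the Lipschitz constant of $\rho$ together with the fact that, on the support $[-1,2]$ of the shifted kernels, $L_{\pi}$ consists of at most three nonzero branch contributions. The only difference is the final bookkeeping: the paper bounds the kernel difference pointwise by $3\delta L\|\mu_{1}-\mu_{2}\|_{L^{2}}$ and then uses that $[0,1]^{2}$ has measure one, which is slightly more direct than your $L^{2}(\mathbb{R})$ support-measure and preimage-counting Cauchy--Schwarz argument, but both are valid and yield the same conclusion.
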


\begin{proof}
The proof follows by $($\ref{KF}$)$, estimating the difference of the
associated kernels. Let us first consider $||L_{\delta ,\mu _{2}}-L_{\delta
,\mu _{1}}||_{L^{2}\rightarrow L^{2}}$. We have 
\begin{eqnarray*}
||L_{\delta \mu _{2}}-L_{\delta \mu _{1}}||_{L^{2}\rightarrow L^{2}} &\leq
&||k_{\delta ,\mu _{1}}-k_{\delta ,\mu _{2}}||_{L^{2}([0,1]^{2})} \\
&=&(\int_{[0,1]^{2}}(k_{\delta ,\mu _{1}}(x,y)-k_{\delta ,\mu
_{2}}(x,y))^{2}dxdy)^{\frac{1}{2}}.
\end{eqnarray*}

We first estimate the distance between the two deterministic parts of the
dynamics. For all $y\in \lbrack 0,1]$ we get%
\begin{eqnarray*}
|T_{\delta ,\mu _{1}}(y)-T_{\delta ,\mu _{2}}(y)| &\leq &|\frac{T(y)}{%
1+\delta \int xd\mu _{1}(x)}-\frac{T(y)}{1+\delta \int xd\mu _{2}(x)}| \\
&\leq &|\frac{T(y)(1+\delta \int xd\mu _{2}(x))-T(y)(1+\delta \int xd\mu
_{1}(x))}{(1+\delta \int xd\mu _{1}(x))(1+\delta \int xd\mu _{2}(x))}| \\
&\leq &\delta |\int xd\mu _{2}(x)-\int xd\mu _{1}(x)| \\
&\leq &\delta ||\mu _{1}-\mu _{2}||_{L^{2}}.
\end{eqnarray*}

Now let us suppose that $\rho $ is $L-$Lipschitz. Since $\rho $ is supported
in $[-1,1]$ we get that $\tau _{-T_{\delta ,\mu }(y)}\rho (x)$ is supported
in $[-2,1]$ for each $\mu $. By this \ $L_{\pi }(\tau _{-T_{\delta ,\mu
}(y)}\rho (x))$ is the sum of at most three non zero contributions for each $%
x\in \lbrack 0,1],$ hence%
\begin{eqnarray*}
|[k_{\delta ,\mu _{2}}-k_{\delta ,\mu _{1}}](x,y)| &\leq &|L_{\pi }[\tau
_{-T_{\delta ,\mu _{2}}(y)}\rho (x)-\tau _{-T_{\delta ,\mu _{1}}(y)}\rho
(x)]| \\
&\leq &3\sup_{x\in \lbrack -1,2],y\in \lbrack 0,1]}|[\tau _{-T_{\delta ,\mu
_{2}}(y)}\rho (x)-\tau _{-T_{\delta ,\mu _{1}}(y)}\rho (x)]| \\
&\leq &3\delta L~||\mu _{1}-\mu _{2}||_{L^{2}}
\end{eqnarray*}%
proving the statement. \ The estimate for $||L_{0,\mu _{1}}-L_{\delta ,\mu
_{1}}||_{L^{2}\rightarrow L^{2}}$ is similar. We have%
\begin{eqnarray*}
|T_{\delta ,\mu _{1}}(y)-T_{0,\mu _{1}}(y)| &\leq &|\frac{T(y)}{1+\delta
\int xd\mu _{1}(x)}-T(y)| \\
&\leq &|\frac{T(y)(1+\delta \int xd\mu _{2}(x))-T(y)}{(1+\delta \int xd\mu
_{1}(x))}| \\
&\leq &\delta |\int xd\mu _{1}(x)| \\
&\leq &\delta ||\mu _{1}||_{L^{2}}
\end{eqnarray*}%
and the statement is obtained as before.
\end{proof}

\begin{proposition}
\label{corrr}Let us consider a self-consistent operator $\mathcal{L}_{\delta
}$ as defined at beginning of Section \ref{strange}. Consider $%
V_{L^{2}}:=\{v\in L^{2},$~$\int vdm=0\}.$ Suppose that there is $n$ such
that for each $v\in V_{L^{2}}$, $||\mathcal{L}_{0}^{n}(v)||_{L^{2}}\leq 
\frac{1}{2}||v||_{L^{2}}$. Then there are $K,\overline{\delta }\geq 0$ such
that for every $\delta \in \lbrack 0,\overline{\delta }),$ and probability
measures $\mu _{1},\mu _{2}\in L^{2}$ satisfying $||\mu _{i}||_{L^{2}}\leq
||\rho ||_{\infty }$ for all $i\in \{1,2\}$ it holds that \ $L_{\delta ,\mu
_{i}}$ has a\ unique invariant probability measure with density in $L^{2}$
which we denote by $f_{\mu _{i}}.$ Furthermore, with these notations%
\begin{equation*}
||f_{\mu _{1}}-f_{\mu _{2}}||_{L^{2}}\leq \delta K||\mu _{1}-\mu
_{2}||_{L^{2}}.
\end{equation*}
\end{proposition}

\begin{proof}
We sketch the proof, which is similar to part of the proof of Proposition %
\ref{existence copy(1)}. Since $||L_{0}^{n}(v)||_{L^{2}}\leq \frac{1}{2}%
||v||_{L^{2}},$ by Proposition \ref{delta} for $\delta $ small enough we
have $||L_{\delta ,\mu _{1}}^{n}(v)||_{L^{2}}\leq \frac{3}{4}||v||_{L^{2}}$
for all $\mu _{1}$ with $||\mu _{1}||_{L^{2}}\leq ||\rho ||_{\infty }$ and $%
v\in V_{L^{2}}$, impliying the uniqueness of the invariant probability
density in $L^{2}$. By this we can also define the resolvent for each such
operator $L_{\delta ,\mu _{1}}$ on $V_{L^{2}}$ with a uniform bound on its $%
L^{2}$ norm. Since%
\begin{eqnarray*}
(Id-L_{\delta ,\mu _{2}})(f_{\mu _{2}}-f_{\mu _{1}}) &=&f_{\mu
_{2}}-L_{\delta ,\mu _{2}}f_{\mu _{2}}-f_{\mu _{1}}+L_{\delta ,\mu
_{2}}f_{\mu _{1}} \\
&=&(L_{\delta ,\mu _{2}}-L_{\delta ,\mu _{1}})f_{\mu _{1}}.
\end{eqnarray*}%
We have that%
\begin{equation*}
(f_{\mu _{2}}-f_{\mu _{1}})=(Id-L_{\delta ,\mu _{2}})^{-1}(L_{\delta ,\mu
_{2}}-L_{\delta ,\mu _{1}})f_{\mu _{1}}.
\end{equation*}

Since $||(Id-L_{\delta ,\mu _{2}})^{-1}||_{L^{2}\rightarrow L^{2}}$ is
uniformly bounded and $||f_{\mu _{1}}||_{L^{2}}\leq ||f_{\mu
_{1}}||_{L^{\infty }}\leq ||\rho ||_{\infty }$, applying Proposition \ref%
{delta} we get the statement.
\end{proof}

\begin{proof}[Proof of Proposition \protect\ref{exist3}]
We apply Theorem \ref{existence} \ with $B_{s}=B_{w}=L^{2}[0,1].$ The
assumption $(Exi1)$ is provided by $(\ref{sl2}),$ $(Exi2)$ is provided by
Proposition \ref{delta}, $(\ref{kd2})$ and $(Exi3)$ is provided by
Proposition \ref{corrr}. The unique fixed point we find is in $L^{2}$ and
since the kernels we consider in the construction are uniformly bounded in $%
L^{\infty }$ the $\ L^{2}$ norm of the fixed point is uniformly bounded as $%
\delta $ varies.
\end{proof}

Now we can prove the linear response formula for these self-consistent
operators in the small nonlinear perturbation regime.

\begin{proposition}[Linear response]
\label{thm:linrespx} Consider the family of self-consistent transfer
operators $\mathcal{L}_{\delta }:L^{2}\rightarrow L^{2}$ as described before
with $\delta \in (0,\overline{\delta })$ as found in Proposition \ref{exist3}
and with invariant probability measures $f_{\delta }$. We have the following
Linear Response formula%
\begin{equation}
\lim_{\delta \rightarrow 0}\frac{f_{\delta }-f_{0}}{\delta }%
=(Id-L_{0})^{-1}\int_{0}^{1}\left( L_{\pi }\left( \tau _{-T_{0}(y)}\frac{%
d\rho }{dx}\right) \right) (x)aT(y)f_{0}(y)dy,
\end{equation}%
where $a=\int tdf_{0}(t)$ and the limit is converging in $L^{2}.$
\end{proposition}

\begin{proof}
The proof is an application of Theorem \ref{thm:linresp} where the spaces
considered in this case are $B_{ss}=B_{s}=B_{w}=L^{2}$. The assumption $%
(SS1) $ (regularity bounds), is implied by Proposition \ref{exist3}. Let us
remark that the unperturbed system is a noisy tent map, hence it has
convergence to equilibrium (by \cite[Remarks 6.4]{AFG}) and the assumption $%
(SS2)$ is satisfied. To verify assumption $(SS3)$ (small perturbation) we
need to verify that, considering $B_{2M}=\{x\in L^{2},||x||\leq 2M\}$. There
is $K\geq 0$ such that and $\mathcal{L}_{0}-\mathcal{L}_{\delta
}:B_{2M}\rightarrow L^{2}$ is $K\delta $-Lipschitz. We have to verify that \
for all $\mu _{1},\mu _{2}\in B_{2M}$%
\begin{equation}
||(\mathcal{L}_{\delta }-\mathcal{L}_{0})\mu _{1}-(\mathcal{L}_{\delta }-%
\mathcal{L}_{0})\mu _{2}||_{L^{2}}\leq K\delta ||\mu _{1}-\mu _{2}||_{L^{2}}.
\label{123}
\end{equation}

Recalling that by Proposition \ref{delta} \ we have $L_{0,\mu _{1}}=L_{0,\mu
_{2}}:=L_{0},$ we have 
\begin{eqnarray*}
(\mathcal{L}_{\delta }-\mathcal{L}_{0})\mu _{1}-(\mathcal{L}_{\delta }-%
\mathcal{L}_{0})\mu _{2} &=&L_{\delta ,\mu _{1}}\mu _{1}-L_{0,\mu _{1}}\mu
_{1}-L_{\delta ,\mu _{2}}\mu _{2}+L_{0,\mu _{2}}\mu _{2} \\
&=&L_{\delta ,\mu _{1}}\mu _{1}-L_{\delta ,\mu _{1}}\mu _{2}+L_{\delta ,\mu
_{1}}\mu _{2}-L_{0,\mu _{1}}\mu _{1}-L_{\delta ,\mu _{2}}\mu _{2}+L_{0,\mu
_{2}}\mu _{2} \\
&=&[L_{\delta ,\mu _{1}}-L_{0}](\mu _{1}-\mu _{2})+[L_{\delta ,\mu
_{1}}-L_{\delta ,\mu _{2}}]\mu _{2}
\end{eqnarray*}

Now by Proposition \ref{delta} 
\begin{equation*}
||[L_{\delta ,\mu _{1}}-L_{0}](\mu _{1}-\mu _{2})||_{L^{2}}\leq \delta
C||\mu _{1}||_{L^{2}}||\mu _{1}-\mu _{2}||_{L^{2}}
\end{equation*}%
and%
\begin{equation*}
||[L_{\delta ,\mu _{1}}-L_{\delta ,\mu _{2}}]\mu _{2}||_{L^{2}}\leq \delta
C||\mu _{2}||_{L^{2}}||\mu _{1}-\mu _{2}||_{L^{2}}
\end{equation*}

proving the statement. Now we can apply Theorem \ref{ss} and deduce that $%
f_{\delta }\rightarrow f_{0}$ in $L^{2}.$ The assumption $(LR1)$ on the
existence of the resolvent is equivalent to $(SS2)$ since we consider only
one space $L^{2}$ and for the same reason the first part of $(LR2)$is
equivalent to $(SS3).$ We now only need to compute the derivative operator.
When the self-consistent operator is considered, as $\delta $ increases, the
effect of the perturbation on the system is only on the map defining the
deterministic part of the dynamics. We\ then use $(\ref{kde})$, from
Proposition \ref{kdottt}. We remark that this perturbation on the
deterministic part of the dynamics depends on the invariant measure \ $%
f_{\delta }$ of the system as $\delta $ changes, however we will see that
since $f_{\delta }\rightarrow f_{0}$ in $L^{2}$ this also give rise to a
family of maps with additive noise of the type 
\begin{equation*}
T_{\delta }=T_{0}+\delta \cdot \dot{T}+t_{\delta }
\end{equation*}%
as in $(\ref{mub}).$ Indeed let us compute $\dot{T}$ in this case.
Considering that $T_{\delta ,f_{\delta }}(x)=\frac{T(x)}{1+\delta \int
xdf_{\delta }}$ we get 
\begin{eqnarray*}
\frac{T_{\delta }-T_{0}}{\delta } &=&\frac{1}{\delta }[\frac{T(x)}{1+\delta
\int xdf_{\delta }}-T(x)] \\
&=&T(x)\frac{-\delta \int xdf_{\delta }}{\delta +\delta ^{2}\int xdf_{\delta
}} \\
&=&-T(x)\frac{\int xdf_{\delta }}{1+\delta \int xdf_{\delta }}.
\end{eqnarray*}

Since $f_{\delta }\rightarrow f_{0}$ in $L^{2}$ hence%
\begin{equation*}
\dot{T}=\lim_{\delta \rightarrow 0}\frac{T_{\delta }-T_{0}}{\delta }%
=-T(x)\int xdf_{0}
\end{equation*}%
and we have the expression for $\dot{L}f_{0}$ from $(\ref{kde}).$ Applying
Theorem \ref{thm:linresp} then, we then get%
\begin{equation*}
\lim_{\delta \rightarrow 0}\frac{f_{\delta }-f_{0}}{\delta }%
=(Id-L_{0})^{-1}\int_{0}^{1}\left( L_{\pi }\left( \tau _{-T_{0}(y)}\frac{%
d\rho }{dx}\right) \right) (x)aT(y)f_{0}(y)dy
\end{equation*}%
where $a=\int tdf_{0}(t).$
\end{proof}

\section{Coupling different maps\label{diff}}

In this section we show how one can use a self-consistent transfer operator
approach as a model for the behavior of networks of coupled maps of
different types. We will see that our general theoretical framework
naturally includes this case. For simplicity we will consider only two types
of maps, also for simplicity we will consider coupled expanding maps on the
circle. Let us consider two different $C^{6}$ expanding maps of the circle $%
(T_{1},S^{1}),$ $(T_{2},S^{1}).$ Given two probability densities $\psi
_{1},\psi _{2}\in L^{1}(\mathbb{S}^{1},\mathbb{R})$ representing the
distribution of probability of the states in the two systems, two coupling
functions $h_{1},h_{2}\in C^{6}(\mathbb{S}^{1}\times \mathbb{S}^{1},\mathbb{R%
})$ and $\delta \in \lbrack 0,\epsilon _{0}]$ \ representing the way in
which these distributions perturb the dynamics (which can be different for
the two different systems). Let us define \ $\Phi _{\delta ,\psi _{1},\psi
_{2}}:\mathbb{S}^{1}\rightarrow \mathbb{S}^{1}$ with $i\in \{1,2\}$ as%
\begin{equation*}
\Phi _{\delta ,\psi _{1},\psi _{2}}(x)=x+\pi _{\mathbb{S}^{1}}(\delta \int_{%
\mathbb{S}^{1}}h_{1}(x,y)\psi _{1}(y)dy+\delta \int_{\mathbb{S}%
^{1}}h_{2}(x,y)\psi _{2}(y)dy)
\end{equation*}%
(here for simplicity we suppose that the diffeomorphism perturbing the two
different maps is the same though with different contributions for the two
different maps, but one can consider different ways to define $\Phi _{\delta
,\psi _{i}}$ for each map) the maps will hence be perturbed by the combined
action of the two densities $\psi _{1}$ and $\psi _{2}$.

Again we assume $\epsilon _{0}$ is so small that $\Phi _{\delta ,\psi
_{1},\psi _{2}}$ is a diffeomorphism \ for all $\delta \in \lbrack
0,\epsilon _{0}]$\ and $\Phi _{\delta ,\psi _{1},\psi _{2}}^{\prime }>0.$
Denote by $Q_{\delta ,\psi _{1},\psi _{2}}$ the transfer operator associated
with $\Phi _{\delta ,\psi _{1},\psi _{2}},$ defined as 
\begin{equation*}
\lbrack Q_{\delta ,\psi _{1},\psi _{2}}(\phi )](x)=\frac{\phi (\Phi _{\delta
,\psi _{1},\psi _{2}}^{-1}(x))}{|\Phi _{\delta ,\delta ,\psi _{1},\psi
_{2}}^{\prime }(\Phi _{\delta ,\delta ,\psi _{1},\psi _{2}}^{-1}(x))|}
\end{equation*}%
for any $\phi \in L^{1}(S^{1},\mathbb{R}).$ Now we consider the action of
the two maps by considering a global system $(S^{1}\times S^{1},F_{\delta
,\psi _{1},\psi _{2}})$ with 
\begin{equation*}
F_{\delta ,\psi _{1},\psi _{2}}(x_{1},x_{2})=(\Phi _{\delta ,\psi _{1},\psi
_{2}}\circ T_{1}(x_{1}),\Phi _{\delta ,\psi _{1},\psi _{2}}\circ
T_{2}(x_{2})).
\end{equation*}

Finally let us consider the space of functions $B_{1}:=\{(f_{1},f_{2})\in
L^{1}(\mathbb{S}^{1})\times L^{1}(\mathbb{S}^{1})\}$ with the norm $%
||(f_{1},f_{2})||_{B_{1}}=||f_{1}||_{L^{1}}+||f_{2}||_{L^{1}}$ (this is also
called the direct sum $L^{1}(\mathbb{S}^{1})\oplus L^{1}(\mathbb{S}^{1})$),
the space $P_{1}$ of probability elements in $B_{1},$ $P_{1}=$ $%
\{(f_{1},f_{2})\in B_{1}~s.t.\forall i\in \{1,2\},~f_{i}\geq 0,\int
f_{i}dm=1\}$ and the stronger spaces $B_{2}:=\{(f_{1},f_{2})\in W^{1,1}(%
\mathbb{S}^{1})\times W^{1,1}(\mathbb{S}^{1})\}$ with the norm $%
||(f_{1},f_{2})||_{B_{1}}=||f_{1}||_{W^{1,1}}+||f_{2}||_{W^{1,1}}$ and $%
B_{3}:=\{(f_{1},f_{2})\in W^{2,1}(\mathbb{S}^{1})\times W^{2,1}(\mathbb{S}%
^{1})\}$ with the norm $%
||(f_{1},f_{2})||_{B_{1}}=||f_{1}||_{W^{2,1}}+||f_{2}||_{W^{2,1}}$ (again
direct sums of Sobolev spaces). These sets can be trivially endowed with a
structure of normed vector space. Coherently with the previous sections we
define a family of transfer operators $L_{\delta ,\phi _{1},\phi
_{2}}:B_{w}\rightarrow B_{w}$ depending on elements of the weaker space $%
(\phi _{1},\phi _{2})\in P_{1}$ as%
\begin{equation}
L_{\delta ,\phi _{1},\phi _{2}}((f_{1},f_{2}))=(Q_{\delta ,\phi _{1},\phi
_{2}}(L_{T_{1}}(f_{1})),Q_{\delta ,\phi _{1},\phi _{2}}(L_{T_{2}}(f_{2})).
\end{equation}%
By this we can define the self-consistent transfer operator $\mathcal{L}%
_{\delta }:B_{w}\rightarrow B_{w}$ associated with this system as%
\begin{equation}
\mathcal{L}_{\delta }((f_{1},f_{2}))=L_{\delta ,f_{1},f_{2}}((f_{1},f_{2})).
\label{cupled2}
\end{equation}

We remark that $B_{1}$ can be identified with a closed subset of $L^{1}(%
\mathbb{S}^{1}\times \mathbb{S}^{1})$ by $(f_{1},f_{2})\rightarrow f$ \
where $f$ is defined by $f(x,y)=f_{1}(x)f_{2}(y)$ \ and $\mathcal{L}_{\delta
}$ preserves this subspace.

We now prove the existence and uniqueness of the invariant measure for this
kind of self-consistent operators for small $\delta $, applying our general
statement, Theorem \ref{existence}.

\begin{proposition}
Let $T_{1},T_{2}$ be two $C^{6}$ expanding maps and let $h_{1},h_{2}\in
C^{6}(\mathbb{S}^{1}\times \mathbb{S}^{1},\mathbb{R}).$ Let us consider a
globally coupled system as defined above. There is some $\overline{\delta }$
such that for all $\delta \in \lbrack 0,\overline{\delta }]$ there is a
unique $(f_{1,\delta },f_{2,\delta })\in B_{2}$ such that 
\begin{equation*}
\mathcal{L}_{\delta }((f_{1,\delta },f_{2,\delta }))=(f_{1,\delta
},f_{2,\delta }).
\end{equation*}%
Furthermore there is $M\geq 0$ such that for all $\delta \in \lbrack 0,%
\overline{\delta }]$%
\begin{equation*}
||(f_{1,\delta },f_{2,\delta })||_{B_{2}}\leq M.
\end{equation*}
\end{proposition}

\begin{proof}
The proof follows by the application of Theorem \ref{existence} with $%
B_{w}=B_{1}$ and $B_{s}=B_{2}$. We verify the needed assumptions; the
assumption $(Exi1)$ is trivial, indeed given $\mu =(\phi _{1},\phi _{2})\in
B_{1}$ for any $\delta $ small enough the invariant measure of $L_{\delta
,\phi _{1},\phi _{2}}$, which is \ a system which is the product of two
expanding maps, is trivially in $B_{2}$ \ and if we let $(\phi _{1},\phi
_{2})$ range in $P_{1}$, the $B_{2}$ norm of the associated invariant
measure is uniformly bounded.

The assumption $(Exi2)$ can be easily deduced by Proposition \ref{launo} as
done in the case of system made by coupling identical maps \ obtaining that
there is some $K_{1}\geq 0$ such that%
\begin{equation*}
||L_{\delta ,\phi _{1},\phi _{2}}-L_{\delta ,\phi _{3},\phi
_{4}}||_{B_{2}\rightarrow B_{1}}\leq \delta K_{1}||(\phi _{1},\phi
_{2})-(\phi _{3},\phi _{4})||_{B_{1}}
\end{equation*}%
for $\delta $ ranging in some neighborhood of the origin and $(\phi
_{1},\phi _{2}),$ $(\phi _{3},\phi _{4})\in P_{1}$. Again, applying Lemma %
\ref{lipz}, like done in Proposition \ref{existenceexp} we verify $(Exi3)$
for this product system. Then Theorem \ref{existence} can be applied, giving
the statement.
\end{proof}

It seems that it is possible to extend all the results we proved for coupled
expanding maps to this kind of systems, with the same ideas and estimates
(but longer formulas and computations, as we have two coordinates). This
work however would be quite long and outside of the scope of this paper.

During the revisions of the present paper, the preprint \cite{ST2} was
published. In this work a formalization of a system of coupled maps of
different types and its related self-consistent transfer operators similar
to the one shown in this section was used to study the synchronization of
interacting clusters of globally coupled maps.

\section{The optimal coupling\label{opt}}

In this section\ we study the problem of finding an optimal small coupling
functions $\delta \dot{h}$ in order to maximize the average of a given
observable. This is an optimal control problem in which the goal is to
change the statistical properties of the system in an certain direction, in
some optimal way. In this case we consider an initial uncoupled system and
introduce a small perturbation by a coupling function $\delta \dot{h}$ and
we look for the response of the system to this small perturbation like in
Theorem \ref{thm:linresp}. We suppose the direction of perturbation $\dot{h}$
can vary in some (infinite dimensional) set $P$, and in this set we look for
an optimal one. In the context of extended systems this kind of problems
were also defined as "management of the statistical properties of the
complex system" (\cite{MK}). In some sense this is an inverse problem
related to the linear response, in which the goal is to find the optimal
perturbation giving a certain kind of response. Related problems in which
the focus is more on the realization of a given fixed response have also
been called "linear request" problems (see \cite{GP} and \cite{Kl}).

The problem of finding an optimal \ infinitesimal perturbation, in order to
maximize the average of a given observable and other statistical properties
of dynamics was investigated in the case of finite Markov chains in \cite%
{ADF} and for a class of random dynamical systems whose transfer operators
are Hilbert Schmidt operators in \cite{AFG}.

In this section we start the investigation of these kind of problems in the
case of self-consistent transfer operators. We obtain existence and
uniqueness of the optimal solution under assumptions similar to the ones
used in \cite{AFG}. We will focus on the question of finding the best
coupling in order to optimize the behavior of a given observable. Let us
explain more precisely but still a bit informally the kind of problem we are
going to consider: given a certain system, we consider a set $P$ of allowed
infinitesimal perturbations we can put in the system. It is natural to think
of the set of allowed perturbations $P$ as a convex set because if two
different perturbations of the system are possible, then their convex
combination (applying the two perturbations with different intensities)
should also be possible. We will also consider $P$ as a subset of some
Hilbert space $\mathcal{H}$ (as it is useful for optimization purposes). Let 
$\mu _{\dot{h},\delta }$ \ be the invariant probability measure of the
system after applying a perturbation in the direction $\dot{h}\in P$ with
intensity $\delta $ (we will formalize later what we mean by direction and
intensity in our case). Let the response to this perturbation be denoted as%
\begin{equation*}
R(\dot{h})=\lim_{\delta \rightarrow 0}\frac{\mu _{\dot{h},\delta }-\mu _{0}}{%
\delta }.
\end{equation*}%
\ Let us consider $c:[0,1]\rightarrow \mathbb{R}$. We are interested in the
rate of increasing of the expectation of $c$%
\begin{equation*}
\frac{d(\int c\ d\mu _{\dot{h},\delta })}{d\delta }\bigg|_{\delta =0}
\end{equation*}%
and the element $\dot{h}\in P$ for which this is maximized, thus we are
interested in finding $\dot{h}_{opt}$ such that%
\begin{equation}
\frac{d(\int c\ d\mu _{\dot{h}_{opt},\delta })}{d\delta }\bigg|_{\delta =0}=%
\underset{\dot{h}\in P}{\max }\frac{d(\int c\ d\mu _{\dot{h},\delta })}{%
d\delta }\bigg|_{\delta =0}.  \label{maxprob}
\end{equation}%
By $(\ref{LRidea})$ and $(\ref{LRidea2})$, under the suitable assumptions,
this turns out to be equivalent to finding $\dot{h}_{opt}$ such that%
\begin{equation}
\int c~dR(\dot{h}_{opt})=\max_{\dot{h}\in P}\int c~dR(\dot{h}).  \label{ehoh}
\end{equation}

This is hence the maximization of a certain linear function on the set $P$.

\subsection{Some reminders on optimization of a linear function on a convex
set \label{recalls}}

The optimal perturbation problem we mean to consider is related to the
maximization of a continuous linear function on the set of allowed
infinitesimal perturbations $P$. The existence and uniqueness of an optimal
perturbation hence depends on the properties of the convex bounded set $P$.
\ We now recall some general results, adapted for our purposes, on
optimizing a linear\ continuous function on a convex set. \ Let $\mathcal{J}:%
\mathcal{H}\rightarrow {\mathbb{R}}$ be a continuous linear function, where $%
\mathcal{H}$ is a separable Hilbert space and $P\subset \mathcal{H}$.

The abstract problem we consider then is to find $\dot{h}_{opt}\in P$ such
that 
\begin{equation}
\mathcal{J}(\dot{h}_{opt})=\max_{\dot{h}\in P}\mathcal{J}(\dot{h}).
\label{gen-func-opt-prob}
\end{equation}

The following propositions summarizes some efficient criteria for the
existence and uniqueness of the solution of such problem (see \cite{AFG},
Section 4 for more details and the proofs).

\begin{proposition}[Existence of the optimal solution]
\label{prop:exist} Let $P$ be bounded, convex, and closed in $\mathcal{H}$.
Then, Problem \eqref{gen-func-opt-prob} has at least one solution.
\end{proposition}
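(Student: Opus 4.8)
The plan is to solve the maximization problem by a standard weak-compactness argument, exploiting that $\mathcal{H}$ is a separable Hilbert space (hence reflexive) and that $P$ is bounded, convex and closed. First I would set $s:=\sup_{\dot h\in P}\mathcal{J}(\dot h)$ and observe that $s$ is finite: since $\mathcal{J}$ is continuous and linear it is bounded, so $|\mathcal{J}(\dot h)|\leq \|\mathcal{J}\|\,\|\dot h\|$, and the boundedness of $P$ gives a uniform bound on $\mathcal{J}$ over $P$. Then I would pick a maximizing sequence $\dot h_n\in P$ with $\mathcal{J}(\dot h_n)\to s$.

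The next step is to extract a weakly convergent subsequence. Because $P$ is bounded, the sequence $(\dot h_n)$ is bounded in $\mathcal{H}$; since $\mathcal{H}$ is a separable Hilbert space, the weak topology restricted to bounded sets is metrizable and bounded sequences admit weakly convergent subsequences (the sequential Banach--Alaoglu property in the reflexive separable setting). Hence there is a subsequence $\dot h_{n_k}\rightharpoonup \dot h_{opt}$ for some $\dot h_{opt}\in\mathcal{H}$. To guarantee $\dot h_{opt}\in P$ I would invoke the fact that a norm-closed convex set is weakly closed (Mazur's theorem): here both convexity and closedness of $P$ are essential, and this is where the two hypotheses on $P$ are genuinely used together.

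Finally I would use that $\mathcal{J}$ is weakly continuous. By the Riesz representation theorem there is $g\in\mathcal{H}$ with $\mathcal{J}(\dot h)=\langle \dot h,g\rangle$, so $\dot h_{n_k}\rightharpoonup \dot h_{opt}$ implies $\mathcal{J}(\dot h_{n_k})=\langle \dot h_{n_k},g\rangle\to\langle \dot h_{opt},g\rangle=\mathcal{J}(\dot h_{opt})$. Combining with $\mathcal{J}(\dot h_{n_k})\to s$ yields $\mathcal{J}(\dot h_{opt})=s$, so $\dot h_{opt}$ solves \eqref{gen-func-opt-prob}.

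I expect the only genuinely delicate point to be the verification that the weak limit stays in $P$; everything else is routine. This is precisely the step that forces the convexity assumption, since without it a norm-closed bounded set need not be weakly closed, and a maximizing sequence could then escape $P$ in the weak limit. Once Mazur's theorem is applied, weak lower/upper semicontinuity of the linear functional $\mathcal{J}$ closes the argument. (Uniqueness is a separate matter, not asserted here, and would require strict convexity of $P$ or of the functional, so I would not address it in this existence statement.)
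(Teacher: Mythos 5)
Your proof is correct: the paper itself does not prove this proposition but defers to \cite{AFG} (Section 4), and the argument there is exactly the weak-compactness one you give — bounded closed convex sets in a (separable) Hilbert space are weakly sequentially compact by reflexivity together with Mazur's theorem, and a continuous linear functional is weakly continuous, hence attains its supremum on such a set. The only point worth noting is the tacit assumption that $P$ is nonempty, which is implicit in the problem formulation.
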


Uniqueness of the optimal solution will be provided by strict convexity of $%
P $.

\begin{definition}
\label{stconv}We say that a convex closed set $A\subseteq \mathcal{H}$ is 
\emph{strictly convex} if for all pair $x,y\in A$ and for all $0<\gamma <1$,
the points $\gamma x+(1-\gamma )y\in \mathrm{int}(A)$, where the relative
interior\footnote{%
The relative interior of a closed convex set $C$ is the interior of $C$
relative to the closed affine hull of $C$.} is meant.
\end{definition}

\begin{proposition}[Uniqueness of the optimal solution]
\label{prop:uniqe} Suppose $P$ is closed, bounded, and strictly convex
subset of $\mathcal{H}$, and that $P$ contains the zero vector in its
relative interior. If $\mathcal{J}$ is not uniformly vanishing on $P$ then
the optimal solution to \eqref{gen-func-opt-prob} is unique.
\end{proposition}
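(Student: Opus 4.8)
The plan is to argue by contradiction, using only the linearity of $\mathcal{J}$ together with strict convexity of $P$; existence of a maximizer need not be re-established, as it is already furnished by Proposition \ref{prop:exist}. Suppose $\dot{h}_{1},\dot{h}_{2}\in P$ were two \emph{distinct} solutions of \eqref{gen-func-opt-prob}, both attaining the maximal value $m:=\max_{\dot{h}\in P}\mathcal{J}(\dot{h})$. For any $0<\gamma <1$ set $z=\gamma \dot{h}_{1}+(1-\gamma )\dot{h}_{2}$; by linearity $\mathcal{J}(z)=\gamma \mathcal{J}(\dot{h}_{1})+(1-\gamma )\mathcal{J}(\dot{h}_{2})=m$, so every point of the open segment joining $\dot{h}_{1}$ and $\dot{h}_{2}$ is again a maximizer. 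I would then derive a contradiction by exhibiting a nearby point of $P$ on which $\mathcal{J}$ is strictly larger.

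First I would extract the structural consequence of the hypothesis that $0$ lies in the relative interior of $P$: the closed affine hull $V:=\overline{\mathrm{aff}}(P)$ then contains the origin and is therefore a closed \emph{linear} subspace of $\mathcal{H}$, so that the admissible perturbation directions at an interior point are exactly the vectors of $V$ itself. Since $P\subseteq V$ and, by assumption, $\mathcal{J}$ is not uniformly vanishing on $P$, there is some $w\in P$ with $\mathcal{J}(w)\neq 0$; replacing $w$ by $-w$ if necessary (which is legitimate because $V$ is linear, even though $-w$ need not lie in $P$) I would fix a direction $v_{0}\in V$ with $\mathcal{J}(v_{0})>0$.

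Next I would invoke strict convexity (Definition \ref{stconv}), which gives $z\in \mathrm{relint}(P)$: there is $r>0$ with $\{z+v:\,v\in V,\ \|v\|<r\}\subseteq P$. Choosing $t>0$ small enough that $\|t v_{0}\|<r$ yields $z+t v_{0}\in P$ and, by linearity,
\[
\mathcal{J}(z+t v_{0})=\mathcal{J}(z)+t\,\mathcal{J}(v_{0})=m+t\,\mathcal{J}(v_{0})>m,
\]
contradicting the maximality of $m$. Hence two distinct maximizers cannot coexist, and the solution is unique.

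The step I expect to require the most care is the second one: one must ensure the increasing direction for $\mathcal{J}$ is found \emph{inside} the subspace $V=\overline{\mathrm{aff}}(P)$, not merely somewhere in $\mathcal{H}$, for otherwise the perturbed point $z+t v_{0}$ could leave the affine hull and the relative-interior ball would give no control. This is precisely where the two hypotheses interlock: $0\in \mathrm{relint}(P)$ forces $V$ to be linear (so $v_{0}\in V$ and $z+t v_{0}\in V$), while ``$\mathcal{J}$ not uniformly vanishing on $P$'' guarantees that such a $v_{0}\in V$ exists at all. Once both are in place, the fattening of $P$ about $z$ provided by strict convexity keeps the perturbation inside $P$, and the contradiction is immediate.
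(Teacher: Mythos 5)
Your proof is correct. Note that the paper does not actually prove this proposition itself — it defers to \cite{AFG}, Section 4 — and your argument is essentially the one given there: by linearity the midpoint of two distinct maximizers is again a maximizer, by strict convexity it lies in the relative interior of $P$, and since $0\in\mathrm{relint}(P)$ forces $\overline{\mathrm{aff}}(P)$ to be a closed \emph{linear} subspace, the non-vanishing of $\mathcal{J}$ on $P$ yields a direction $v_{0}$ in that subspace with $\mathcal{J}(v_{0})>0$, so that $z+tv_{0}$ stays in $P$ for small $t>0$ and strictly beats the maximum. You also correctly isolated the only delicate point, namely that the increasing direction must be found inside the closed affine hull rather than merely in $\mathcal{H}$, which is exactly where the hypothesis $0\in\mathrm{relint}(P)$ is used.
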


We remark that in the case $\mathcal{J}$ is uniformly vanishing, all the
elements of $P$ are solutions of the problem $(\ref{ehoh}).$

\subsection{Optimizing the response of the expectation of an observable}

\label{subsubsec1} Let $c\in L^{1}$ be a given observable. We consider the
problem of finding an infinitesimal perturbation that maximizes the
expectation of $c.$ \ As motivated before, we want to solve the problem
stated in $(\ref{ehoh})$. Suppose that $P$ is a closed, bounded, convex
subset of $\mathcal{H}$ containing the zero perturbation, and that $\mathcal{%
J}$ is not uniformly vanishing on $P$. Let us consider the function $%
\mathcal{J}(\dot{h})=\int c~dR(\dot{h}).$ When this function is continuous
as a map from $(P,\Vert \cdot \Vert _{\mathcal{H}})$ to $\mathbb{R}$, we may
immediately apply Proposition \ref{prop:exist} to obtain that there exists a
solution to the problem considered in \textit{\ }$(\ref{ehoh})$\textit{.}
If, in addition, $P$ is strictly convex and $\mathcal{J}$ is nonvanishing,
then by Proposition \ref{prop:uniqe} the solution to $(\ref{ehoh})$ is
unique.

In the following subsections we hence apply these remarks to find the
existence and uniqueness of the optimal coupling in the case of coupled
expanding map and maps with additive noise.

\subsubsection{The optimal coupling for expanding maps}

We consider self-consistent transfer operators coming from a system of
coupled maps as in Section \ref{secmap}, where $\mathcal{L}_{0}$ is the
uncoupled operator and $\mathcal{L}_{\delta }$ is the self-consistent
operator with coupling driven by a function $\dot{h}:\mathbb{S}^{1}\times 
\mathbb{S}^{1}\rightarrow \mathbb{R}$ and with strength $\delta .$ We proved
in Proposition \ref{thm:linresp copy(1)} (see $($\ref{iii}$)$) that the
response of the invariant measure of the system as $\delta $ increases is
given by 
\begin{equation*}
R(\dot{h})=(Id-L_{0})^{-1}(h_{0}\int_{S^{1}}\dot{h}(x,y)h_{0}(y)dy)^{\prime
}.
\end{equation*}

Given some observable $c\in L^{1}$ and some convex set of allowed
perturbations $P$ we now apply the previous results to the problem of
finding the optimal coupling $\dot{h}_{opt}\in P$ solving the problem $(\ref%
{ehoh})$ for this response function \ $R(\dot{h})$. \ From Remark \ref{dalla}
(see also $(\ref{LRidea2})$) we know that the rate of change of the average
of $c$ can be estimated by the linear response when the convergence of the
linear response is in $W^{1,1}$.

We remark that to apply the general results of Section \ref{recalls} we need 
$P$ \ being a subset of a Hilbert space. \ Since to apply Proposition \ref%
{thm:linresp copy(1)} we need $\dot{h}\in C^{6}$ we consider a Hilbert space
of perturbations which is included in $C^{6}.$ A simple choice is $W^{7,2}$.
We hence consider a system with coupled expanding maps, the Hilbert space $%
W^{7,2}$ and a convex set $P\subseteq W^{7,2}(S^{1}\times S^{1}).$

\begin{proposition}
Under the above assumptions, supposing that $P$ is a closed bounded convex
set in $W^{7,2},$ Problem $(\ref{ehoh})$ has a solution in $P.$ If
furthermore $P$ is strictly convex either the optimal solution is unique or
every $\dot{h}\in P$ is the optimal solution.
\end{proposition}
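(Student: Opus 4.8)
The plan is to view Problem $(\ref{ehoh})$ as a concrete instance of the abstract optimization problem $(\ref{gen-func-opt-prob})$ in the Hilbert space $\mathcal{H}=W^{7,2}(S^1\times S^1)$, with objective functional
\[
\mathcal{J}(\dot h):=\int c\ dR(\dot h),\qquad
R(\dot h)=(Id-L_0)^{-1}\Bigl(h_0\,S_{\dot h}\Bigr)',\quad
S_{\dot h}(x):=\int_{S^1}\dot h(x,y)h_0(y)\,dy,
\]
where the expression for $R$ is the linear response supplied by Theorem \ref{thm:linresp copy(1)} (formula $(\ref{iii})$). Once I verify that $\mathcal{J}$ is a \emph{continuous linear} map $\mathcal{H}\to\mathbb{R}$, the existence of a maximizer on the bounded closed convex set $P$ will follow at once from Proposition \ref{prop:exist}, and the uniqueness dichotomy from Proposition \ref{prop:uniqe} together with the remark following it. Linearity of $\mathcal{J}$ is immediate, since $\dot h\mapsto S_{\dot h}$ is linear, differentiation is linear, the resolvent $(Id-L_0)^{-1}$ is a bounded linear operator, and integration against $c$ is linear.

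The hard part, and essentially the only real work, is the continuity estimate $|\mathcal{J}(\dot h)|\le\mathrm{Const}\cdot\Vert\dot h\Vert_{W^{7,2}}$. First I would use the Sobolev embedding (the space $W^{7,2}$ being chosen to lie inside $C^6$, exactly the regularity of $\dot h$ required by Theorem \ref{thm:linresp copy(1)}) to bound $\Vert S_{\dot h}\Vert_{C^2}\le C\Vert\dot h\Vert_{W^{7,2}}\Vert h_0\Vert_{L^1}$. Since $h_0\in C^2$ (the invariant density of a $C^6$ expanding map, regular by Proposition \ref{existenceexp}), the product satisfies $\Vert h_0 S_{\dot h}\Vert_{W^{2,1}}\le C'\Vert h_0\Vert_{C^2}\Vert S_{\dot h}\Vert_{C^2}$, so that $(h_0 S_{\dot h})'\in W^{1,1}$; being a derivative of a periodic function it has zero average and hence lies in $V_{W^{1,1}}$. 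By Proposition \ref{propora} the resolvent $(Id-L_0)^{-1}$ is bounded on $V_{W^{1,1}}$, which yields $\Vert R(\dot h)\Vert_{W^{1,1}}\le C''\Vert\dot h\Vert_{W^{7,2}}$. Finally, the one-dimensional embedding $W^{1,1}(S^1)\hookrightarrow C^0\subseteq L^\infty$ is what makes the pairing with a mere $L^1$ observable bounded:
\[
|\mathcal{J}(\dot h)|=\Bigl|\int c\cdot R(\dot h)\,dm\Bigr|
\le\Vert c\Vert_{L^1}\,\Vert R(\dot h)\Vert_{L^\infty}
\le\mathrm{Const}\cdot\Vert\dot h\Vert_{W^{7,2}}.
\]

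With $\mathcal{J}$ shown to be continuous and linear, Proposition \ref{prop:exist} gives an optimal $\dot h_{opt}\in P$. If in addition $P$ is strictly convex (and, per the standing hypotheses of this subsection, contains the zero perturbation in its relative interior), then Proposition \ref{prop:uniqe} applies and produces the alternative: either $\mathcal{J}$ is not uniformly vanishing on $P$, in which case the maximizer is unique, or $\mathcal{J}$ vanishes uniformly on $P$, in which case every $\dot h\in P$ solves $(\ref{ehoh})$. This is exactly the stated conclusion. I expect the only delicate bookkeeping to be the tracking of derivatives in the continuity chain: two derivatives are spent to land $(h_0 S_{\dot h})'$ in $W^{1,1}$, and it is precisely the $W^{1,1}\hookrightarrow C^0$ embedding that allows an $L^1$ observable $c$, while the generous choice $\mathcal{H}=W^{7,2}$ is what guarantees that $S_{\dot h}$ carries the needed $C^2$ regularity.
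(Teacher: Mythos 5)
Your proposal is correct and takes essentially the same route as the paper: both reduce Problem $(\ref{ehoh})$ to Propositions \ref{prop:exist} and \ref{prop:uniqe}, and both establish continuity of $\dot{h}\mapsto \int c~dR(\dot{h})$ through the same chain of estimates (product/derivative bound landing $(h_{0}S_{\dot{h}})^{\prime}$ in $W^{1,1}$, boundedness of the resolvent from Proposition \ref{propora}, and the embedding $W^{1,1}(\mathbb{S}^{1})\hookrightarrow L^{\infty}$ to pair with $c\in L^{1}$). Your extra remark that $(h_{0}S_{\dot{h}})^{\prime}$ has zero average, so that the resolvent is legitimately applied on the zero-mean subspace, makes explicit a point the paper's proof leaves implicit.
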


\begin{proof}
The result directly follows applying Propositions \ref{prop:exist} \ and \ref%
{prop:uniqe}. In order to apply the propositions we have to check that $\dot{%
h}\rightarrow \int c~dR(\dot{h})$ is continuous on $P.$ Since 
\begin{equation*}
R(\dot{h})=(Id-L_{0})^{-1}(h_{0}\int_{S^{1}}\dot{h}(x,y)h_{0}(y)dy)^{\prime }
\end{equation*}%
we have 
\begin{eqnarray*}
\int c~dR(\dot{h}) &\leq &||c||_{1}||(Id-L_{0})^{-1}(h_{0}\int_{S^{1}}\dot{h}%
(x,y)h_{0}(y)dy)^{\prime }||_{\infty } \\
&\leq &||c||_{1}||(Id-L_{0})^{-1}||_{W^{1,1}\rightarrow
W^{1,1}}||(h_{0}\int_{S^{1}}\dot{h}(x,y)h_{0}(y)dy)^{\prime }||_{W^{1,1}} \\
&\leq &||c||_{1}||(Id-L_{0})^{-1}||_{V_{W^{1,1}}\rightarrow
W^{1,1}}2||h_{0}||_{C^{3}}^{2}||\dot{h}||_{W^{2,1}}.
\end{eqnarray*}

Now the result follow by a direct application of \ Propositions \ref%
{prop:exist} \ and \ref{prop:uniqe}.
\end{proof}

\subsubsection{The optimal coupling for systems with additive noise}

Now we consider the optimal coupling in order to maximize the average of one
observable $c$ in the case of the coupled maps with additive noise as
described in Section \ref{noise}. Since Proposition \ref{thm:linresp copy(2)}
gives a convergence of the linear response in the strong space $C^{k}$, by
Remark \ref{dalla} we know that we can consider very general observables.
For simplicity we will consider $c\in L^{1}$ but in fact we could consider
even weaker spaces as distribution spaces (the dual of $C^{k}$). For
simplicity we also take $P\subseteq W^{1,2}$ to let $($\ref{11121}$)$ make
sense. The response formula in this case is%
\begin{equation*}
R(\dot{h})=(Id-L_{0})^{-1}\rho \ast (L_{T_{0}}(h_{0})\int_{S^{1}}\dot{h}%
(x,y)h_{0}(y)dy)^{\prime }.
\end{equation*}%
We will hence consider the problem $(\ref{ehoh})$ with this response
function. Similarly to the expanding maps case we get the following
statement.

\begin{proposition}
Under the above assumptions, supposing that $P$ is a closed bounded convex
set in $W^{1,2},$ Problem $(\ref{ehoh})$ has a solution in $P.$ If
furthermore $P$ is strictly convex either the solution is unique or every $%
\dot{h}\in P$ is the optimal solution.
\end{proposition}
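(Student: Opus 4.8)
The plan is to deduce the statement from the abstract optimization results recalled earlier, namely Proposition \ref{prop:exist} and Proposition \ref{prop:uniqe}, applied with $\mathcal{H}=W^{1,2}(\mathbb{S}^{1}\times \mathbb{S}^{1})$ and with the functional $\mathcal{J}(\dot{h})=\int c\ dR(\dot{h})$, exactly as in the expanding map case treated just above. Linearity of $\mathcal{J}$ in $\dot{h}$ is immediate from the explicit form of $R(\dot{h})$, so the only substantive point to verify is that $\mathcal{J}:(P,\Vert \cdot \Vert _{W^{1,2}})\rightarrow \mathbb{R}$ is continuous. Once this is in place, Proposition \ref{prop:exist} provides a maximizer in $P$; if in addition $P$ is strictly convex and contains $0$ in its relative interior, then either $\mathcal{J}$ vanishes identically on $P$ (so that every $\dot{h}\in P$ is trivially optimal) or Proposition \ref{prop:uniqe} yields uniqueness, which is precisely the dichotomy in the statement.

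For the continuity I would bound $\mathcal{J}$ in the operator-theoretic way used to obtain $(\ref{11121})$. Writing $S(x)=\int_{\mathbb{S}^{1}}\dot{h}(x,y)h_{0}(y)\,dy$ and using $\int c\ dR(\dot{h})\leq \Vert c\Vert _{1}\Vert R(\dot{h})\Vert _{\infty }$, it suffices to estimate $\Vert R(\dot{h})\Vert _{C^{k}}$. Here the resolvent $(Id-L_{0})^{-1}$ is bounded on $C^{k}$ by Corollary \ref{uuuu}, so I am reduced to controlling the $C^{k}$ norm of $\rho _{\xi }\ast (L_{T_{0}}(h_{0})\,S)^{\prime }$. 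Integrating by parts on the circle to transfer the derivative onto the kernel gives $\rho _{\xi }\ast (L_{T_{0}}(h_{0})\,S)^{\prime }=\rho _{\xi }^{\prime }\ast (L_{T_{0}}(h_{0})\,S)$, and then the regularization inequality $(\ref{eq:regineqI})$ bounds this by $\Vert \rho _{\xi }\Vert _{C^{k+1}}\Vert L_{T_{0}}(h_{0})\,S\Vert _{L^{1}}$. Since $h_{0}$ is a probability density and $L_{T_{0}}$ preserves positivity and integral, $\Vert L_{T_{0}}(h_{0})\Vert _{L^{1}}=1$, whence $\Vert L_{T_{0}}(h_{0})\,S\Vert _{L^{1}}\leq \Vert S\Vert _{\infty }$, and the whole estimate collapses to controlling $\Vert S\Vert _{\infty }$ by $\Vert \dot{h}\Vert _{W^{1,2}}$.

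The step I expect to be the crux is exactly this last bound, which is what justifies the choice $P\subseteq W^{1,2}$ rather than a higher Sobolev space. By Cauchy--Schwarz $|S(x)|\leq \Vert \dot{h}(x,\cdot )\Vert _{L^{2}}\Vert h_{0}\Vert _{L^{2}}$, so it is enough to bound $\sup _{x}\Vert \dot{h}(x,\cdot )\Vert _{L^{2}}$. Setting $g(x)=\int _{\mathbb{S}^{1}}\dot{h}(x,y)^{2}\,dy$, one has $\Vert g\Vert _{L^{1}}=\Vert \dot{h}\Vert _{L^{2}}^{2}$ and, differentiating under the integral, $\Vert g^{\prime }\Vert _{L^{1}}\leq 2\Vert \dot{h}\Vert _{L^{2}}\Vert \partial _{x}\dot{h}\Vert _{L^{2}}$, so that $\Vert g\Vert _{W^{1,1}(\mathbb{S}^{1})}\leq C\Vert \dot{h}\Vert _{W^{1,2}}^{2}$. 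The one-dimensional embedding $W^{1,1}(\mathbb{S}^{1})\hookrightarrow L^{\infty }(\mathbb{S}^{1})$ then gives $\sup _{x}\Vert \dot{h}(x,\cdot )\Vert _{L^{2}}\leq C\Vert \dot{h}\Vert _{W^{1,2}}$. Combining the estimates produces $|\mathcal{J}(\dot{h})|\leq C\Vert c\Vert _{1}\Vert (Id-L_{0})^{-1}\Vert _{C^{k}\rightarrow C^{k}}\Vert \rho _{\xi }\Vert _{C^{k+1}}\Vert h_{0}\Vert _{L^{2}}\Vert \dot{h}\Vert _{W^{1,2}}$, i.e. boundedness (hence continuity) of the linear functional $\mathcal{J}$ on $P$; existence and the uniqueness dichotomy then follow from Propositions \ref{prop:exist} and \ref{prop:uniqe} as indicated, completing the argument.
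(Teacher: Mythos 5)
Your proposal is correct, and its skeleton is the same as the paper's: reduce the statement to Propositions \ref{prop:exist} and \ref{prop:uniqe} by checking that $\dot h\mapsto \mathcal{J}(\dot h)=\int c\,dR(\dot h)$ is a continuous linear functional on $(P,\Vert\cdot\Vert_{W^{1,2}})$, and treat the degenerate case $\mathcal{J}\equiv 0$ on $P$ as the branch in which every element of $P$ is optimal. Where you genuinely diverge is in the continuity estimate, and your version is actually tighter than the paper's. The paper's displayed chain ends with the bound $\Vert c\Vert_1\Vert h_0\Vert_\infty\Vert\dot h\Vert_{W^{2,1}}$, which taken literally does not close the argument: $W^{1,2}(\mathbb{S}^1\times\mathbb{S}^1)$ does not embed into $W^{2,1}$, so a $W^{2,1}$ bound does not give continuity in the $W^{1,2}$ topology (the intended norm there was presumably $W^{1,1}$, which $\Vert\cdot\Vert_{W^{1,2}}$ does dominate). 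Your route avoids this issue: you move the derivative onto the mollifier via $\rho_\xi\ast f'=\rho_\xi'\ast f$, apply the regularization inequality $(\ref{eq:regineqI})$ and the Markov property of $L_{T_0}$ to reduce everything to $\Vert S\Vert_\infty$, and then control $\sup_x\Vert\dot h(x,\cdot)\Vert_{L^2}$ by $C\Vert\dot h\Vert_{W^{1,2}}$ through the slicing function $g(x)=\Vert\dot h(x,\cdot)\Vert_{L^2}^2$ and the one-dimensional embedding $W^{1,1}(\mathbb{S}^1)\hookrightarrow L^\infty(\mathbb{S}^1)$. This produces a bound directly in the norm the abstract propositions require, and it has the side benefit of giving distributional meaning to $(L_{T_0}(h_0)S)'$ even though $L_{T_0}(h_0)$ is only in $L^1$ here. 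Two minor points: you should note that $\rho_\xi'\ast(L_{T_0}(h_0)S)$ has zero mean (it does, being the mollification of the derivative of a periodic function), since Corollary \ref{uuuu} provides the resolvent only on zero-average functions; and, as you correctly flag, the uniqueness branch uses that $0$ lies in the relative interior of $P$, a hypothesis coming from the standing assumptions of the section rather than from the proposition's own wording — the paper glosses over this in exactly the same way.
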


\begin{proof}
The result again directly follows applying Propositions \ref{prop:exist} \
and \ref{prop:uniqe}. In order to apply the propositions we check that $\dot{%
h}\rightarrow \int c~dR(\dot{h})$ is continuous on $P.$ Since in this case 
\begin{equation*}
R(\dot{h})=(Id-L_{0})^{-1}\rho \ast (h_{0}\int_{S^{1}}\dot{h}%
(x,y)h_{0}(y)dy)^{\prime }
\end{equation*}

we have 
\begin{eqnarray*}
\int c~dR(\dot{h}) &\leq &||c||_{1}||(Id-L_{0})^{-1}\rho \ast
(h_{0}\int_{S^{1}}\dot{h}(x,y)h_{0}(y)dy)^{\prime }||_{\infty } \\
&\leq &||c||_{1}||(Id-L_{0})^{-1}||_{V_{C^{k}}\rightarrow L^{\infty
}}2||\rho ||_{C^{k}}||h_{0}||_{W^{1,1}}^{2}||\dot{h}||_{W^{2,1}}
\end{eqnarray*}

establishing the continuity of $\dot{h}\rightarrow \int c~dR(\dot{h})$.
\end{proof}

\noindent \textbf{Acknowledgments.} S.G. is partially supported by the
research project PRIN 2017S35EHN\_004 "Regular and stochastic behavior in
dynamical systems" of the Italian Ministry of Education and Research. The
author whish to thank F. S\'{e}lley, S. Vaienti, M. Tanzi and C. Liverani
for fruitful discussions during the preparation of the work. The author
whish also to thank the anonymous reviewers for their patience and the help
in revising this long text.\newline

\end{document}